\documentclass[11pt, reqno]{amsart}
\usepackage{amsthm,amssymb}
\usepackage{graphicx}
\usepackage{enumerate}
\usepackage{amssymb,amsmath,graphicx,amsfonts,euscript}
\pdfoutput=1
\usepackage{color}
\usepackage{extarrows}

\setlength{\parindent}{2em}

\usepackage[margin=3cm, a4paper]{geometry}

 \def\norm#1{\|#1\|}
\def\normb#1{\bigg\|#1\bigg\|} \def\normo#1{\left\|#1\right\|}
\def\aabs#1{\left|#1\right|}
\def\abs#1{\left|#1\right|}
\def\H{{\cal H}}

\def\H1{H^1(\R)}

 \newcommand{\C}{\mathbb{C}}
 \newcommand{\R}{\mathbb{R}}
\newcommand{\Z}{\mathbb{Z}}

 \newcommand{\de}{\delta}
\newcommand{\e}{\varepsilon} 
 \newcommand{\la}{\lambda}
 \newcommand{\s}{\sigma}
  \newcommand{\x}{\xi}
\newcommand{\y}{\eta}

\newcommand{\De}{\Delta}

\newcommand{\p}{\partial} 
\newcommand{\re}{\mathop{\mathrm{Re}}}
\newcommand{\im}{\mathop{\mathrm{Im}}}

 \setlength{\marginparwidth}{2cm}
 \setlength{\marginparwidth}{2cm}
 \newcommand{\Del}[1]{}

\numberwithin{equation}{section}

\newtheorem{thm}{Theorem}[section]
\newtheorem{cor}[thm]{Corollary}
\newtheorem{lem}[thm]{Lemma}
\newtheorem{lemma}[thm]{Lemma}
\newtheorem{prop}[thm]{Proposition}
\newtheorem{definition}[thm]{Definition}
\newtheorem{remark}{Remark}

\theoremstyle{remark}

\newtheorem*{exam*}{Examples}

\begin{document}

\setcounter{page}{1}

\title[]{Well-posedness of the discrete nonlinear Schr\"odinger equations and the Klein-Gordon equations}

%

\author[Yifei Wu]{Yifei Wu}
\address{Center for Applied Mathematics, Tianjin University,
Tianjin 300072, P.R.China}
\email{yerfmath@gmail.com}
\thanks{Y.Wu and Z.Yang were in part supported by NSFC (Grant No. 12171356).}
\subjclass[2010]{Primary  35L70; Secondary 35B35}

\author[Zhibo Yang]{Zhibo Yang}
\address{Center for Applied Mathematics, Tianjin University,
Tianjin 300072, P.R.China}
\email{zbyang@tju.edu.cn}
\subjclass[2010]{Primary  35L70; Secondary 35B35}

\author{Qi Zhou}
\address{
Chern Institute of Mathematics and LPMC, Nankai University, Tianjin 300071, China
}
\thanks{Q.Zhou was supported by National Key R\&D Program of China (2020 YFA0713300), NSFC grant (12071232) and Nankai Zhide Foundation}
\email{qizhou@nankai.edu.cn}
\subjclass[2010]{Primary  81Q05; Secondary 39A12}


\keywords{Discrete Nonlinear Klein-Gordon equation,
Discrete Nonlinear Schr\"odinger equation, well-posedness, blow up, $l^p$}

\maketitle

\begin{abstract}

The primary objective of this paper is to investigate the well-posedness theories associated with the discrete nonlinear Schr\"odinger and Klein-Gordon equations. These theories encompass both local and global well-posedness, as well as the existence of blowing-up solutions for large and irregular initial data.

The main results of this paper presented in this paper can be summarized as follows:

1. Discrete Nonlinear Schr\"odinger Equation: We establish global well-posedness in $l^p_h$ spaces for all $1\leq p\leq \infty$, regardless of whether it is in the defocusing or focusing cases.

2. Discrete Klein-Gordon Equation (including Wave Equation): We demonstrate local well-posedness in $l^p_h$ spaces for all $1\leq p\leq \infty$. Furthermore, in the defocusing case, we establish global well-posedness in $l^p_h$ spaces for any $2\leq p\leq 2\s+2$. In contrast, in the focusing case, we show that solutions with negative energy blow up within a finite time.

These conclusions reveal the distinct  dynamic behaviors exhibited by the solutions of the equations in discrete settings compared to their continuous setting. Additionally, they illuminate the significant role that discretization plays in preventing ill-posedness and collapse phenomena.
\end{abstract}

\section{Introduction}
\subsection{Well-posedness theory of discrete nonlinear Schr\"odinger equation}

We consider the following discrete nonlinear Schr\"odinger equation (DNLS), 
\begin{align}\label{defDL}
\begin{split}
 \left \{
\begin{array}{ll}
iu^{\prime}_{n}(t)-\Delta_hu_n+V_n u_n+\lambda |u_n|^{2\s}u_n =0\\
{u_n(0)}=u_{n,0},
\end{array}
\right.
\end{split}
\end{align}
where $u=\{u_n\}_{n\in h\Z^d}:\R\times h\Z^d\rightarrow \C$ is complex-valued, $u_0=\{u_{n,0}\}_{n\in h\Z^d}$ is initial data, and $h>0$ denotes the stepsize of the lattice $h\Z^d$.
Here, we usually take $\lambda=\pm1$, while $\lambda=-1$  is called \textit{focusing}, and \textit{defocusing} for $\lambda=1$.  The corresponding discrete Schr\"odinger operator  takes the form 
\begin{equation}\label{sch}
H= -\Delta_hu_n+V_n u_n
\end{equation}
where  
\begin{align}\label{defLAP}
\Delta_h u_n = \sum_{j=1}^d \frac{u_{n+he_j}+u_{n-he_j}-2u_n}{h^2}
\end{align}
denotes the discrete Laplace operator for any $n\in h\Z^d$ with the canonical basis $(e_j)_{1\leq j\leq d} $ on $\R^d$, and  $V=\{V_n\}_{n\in h\Z^d}$ is a bounded real-valued potential.  What's of particular interest and receive wide study \cite{1993Localization,2008Theabs,2015GlobalThe,2017SharpPha,1989Anewp,1983Absen, 1999Metal,2018Univer,2000Schro,2023Exact} is the case 
\begin{itemize}
\item $V_n=0$ or $V_n$ is a periodic sequence.
\item $V_n$  is random, i,e.   it is  a family of independent identically distributed random variables on $[0,1]$.
\item $V_n$ is quasiperiodic, i.e, $V_n=f(\theta+n \alpha)$, where $f$ is a continuous function on $\mathbb{R}^d / \mathbb{Z}^d$, and $(1,\alpha)$ is  rationally independent.       
\end{itemize}

The DNLS is a fundamental mathematical model in physics with a wide range of applications. For example, it was widely used in the study of  one-dimensional arrays of coupled optical waveguides \cite{Eisenberg2002Optical}, the propagation of optical waves in nonlinear media \cite{2002Discrete, Sukhorukov2003Spatial}.

In the past decades, there has been a significant interest in finding special solutions of \eqref{defDL}. Examples include ground states \cite{2016Onsite, 2023Ground, 1999Excitation}, standing wave solutions with exponentially decaying amplitudes \cite{2006Gap, 2008Periodic}, solitary traveling waves \cite{2019Existence}, solutions that exhibit spatial localization and quasi-periodic behavior in time \cite{2008Quasi, 2014Localization, 2022Nonlinear}, as well as long-time Anderson localization \cite{2023Anderson, 2009LongTime}. And some results provide some estimates of the growth of discrete sobolev norms of the solution \cite{bernier2018bounds}. Of course, another fundamental challenge in comprehending partial differential equations lies in the theory of well-posedness.   However, as mentioned in \cite{MR2682116},  
well-posedness theory of \eqref{defDL} is not quite satisfactory.    Before explaining the results, we recall the following 
 standard definitions:

\begin{definition}[Well-posedness]\label{def1}
The well-posedness, blow-up criterion, and global well-posedness can be defined as follows:
\begin{enumerate}
\item We denote by $C_t\left(I;X_0\right)$ the space of continuous functions from time interval $I$ to the topological space $X_0$.We say that the Cauchy problem is locally well-posed in $C_t\left(I;X_0\right)$ if the following properties hold:
\begin{enumerate}
\item There is unconditional uniqueness in $C_t\left(I;X_0\right)$ for the problem.
\item For every $u_{0}\in X_0$, there exists a strong solution defined on a maximal time interval $I=\left(-T_{\min},T_{\max}\right)$, with $-T_{\min},T_{\max}\in (0,+\infty]$.
\item The solution depends continuously on the initial value.
\end{enumerate}

\item There is a blow-up alternative. If $T_{\max}<\infty$, then $\lim\limits_{t\rightarrow T_{\max}}\norm{u(t)}_{X_0}=+\infty$ (respectively, if $T_{\min}<\infty$, then $\lim\limits_{t\rightarrow -T_{\min}}\norm{u(t)}_{X_0}=+\infty$). In this case, we call the solution blows up in finite time.

\item If the maximal lifespan $I=\R$, then we call it globally well-posed.
\end{enumerate}
\end{definition}

It is worth noting that the definition above is referred to as the ``unconditional" well-posedness, which is stronger than the normal concept.

As of now, the global well-posedness theory of \eqref{defDL} has been primarily confined to weighted $l^2$-spaces \cite{bernier2018bounds, MR2682116, MR2147475}. It remains uncertain whether the solution to \eqref{defDL} remains well-posed in $l^p$ spaces where $p\neq 2$. Furthermore, when we deviate from the case where $V_n=0$ and introduce different potentials $V_n$, it significantly alters the distinctive properties of \eqref{sch}. For instance, when $V_n$ is random, \eqref{sch} typically exhibits a pure point spectrum \cite{1993Localization, 1983Absen}. However, if $V_n$ is quasi-periodic, it leads to various spectral behaviors such as pure point, absolutely continuous, and singular continuous spectrum \cite{2008Theabs, 2015GlobalThe, 2017SharpPha, 1999Metal, 2018Univer}. It is not yet clear whether this has an impact on the well-posedness problem. In this paper, we aim to address these questions. To begin, we will state the local well-posedness as follows:

\begin{thm}\label{thmLOC} Let $1\le p\leq+\infty$ and $\lambda =\pm 1$.
 Then the Cauchy problems \eqref{defDL}  are locally well-posed in $l^p_h$\footnote{See for section 2.1 for the definition of $l^p_h$.}.  
\end{thm}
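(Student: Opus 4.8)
The plan is to treat \eqref{defDL} as a Banach-space ODE and apply the standard Picard iteration / Cauchy–Lipschitz theory. The key observation is that on the lattice $h\Z^d$ the Laplacian $\Delta_h$ defined by \eqref{defLAP} is a \emph{bounded} operator on every $l^p_h$: each value $u_n$ is replaced by a finite linear combination of $u_n$ and its $2d$ nearest neighbours with coefficients of size $O(h^{-2})$, so $\|\Delta_h u\|_{l^p_h}\le \tfrac{4d}{h^2}\|u\|_{l^p_h}$ by the triangle inequality (or Young's inequality for convolutions), uniformly in $1\le p\le\infty$. Likewise multiplication by the bounded potential $V=\{V_n\}$ is bounded on $l^p_h$ with norm $\le\|V\|_{l^\infty}$. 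Hence the linear part $H=-\Delta_h+V$ generates a uniformly continuous group $e^{-itH}$ on each $l^p_h$, and the whole difficulty is reduced to the nonlinearity.

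Next I would verify that the nonlinear map $N(u)=\lambda|u|^{2\s}u$ is locally Lipschitz on $l^p_h$. Pointwise one has the elementary inequality $\big||a|^{2\s}a-|b|^{2\s}b\big|\lesssim_\s \big(|a|^{2\s}+|b|^{2\s}\big)|a-b|$; summing (or taking sup) over $n$ and using $\|fg\|_{l^p}\le\|f\|_{l^\infty}\|g\|_{l^p}$ together with the embedding $l^p_h\hookrightarrow l^\infty_h$ (valid for \emph{every} $p$, with constant depending on $h$), one gets
\begin{align*}
\|N(u)-N(v)\|_{l^p_h}\lesssim_{\s}\big(\|u\|_{l^p_h}^{2\s}+\|v\|_{l^p_h}^{2\s}\big)\,\|u-v\|_{l^p_h},
\end{align*}
so $N$ is Lipschitz on bounded sets of $l^p_h$. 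Writing the Duhamel formulation
\begin{align*}
u(t)=e^{-itH}u_0-i\lambda\int_0^t e^{-i(t-s)H}\,|u(s)|^{2\s}u(s)\,ds,
\end{align*}
a contraction-mapping argument in $C_t([-T,T];l^p_h)$ on a ball of radius $2\|u_0\|_{l^p_h}$ gives, for $T=T(\|u_0\|_{l^p_h})$ small enough, a unique fixed point; this yields existence and uniqueness of a strong solution (the equation is satisfied in the classical, coordinatewise sense since everything is bounded). Continuous dependence on $u_0$ follows from the same Lipschitz estimates and Gronwall, and the solution extends to a maximal interval $(-T_{\min},T_{\max})$ in the usual way.

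For the \emph{unconditional} uniqueness demanded by Definition \ref{def1}, I would note that any $u\in C_t(I;l^p_h)$ solving \eqref{defDL} automatically lies in $C_t(I;l^\infty_h)$, hence is differentiable in time with $u'=-iHu-i\lambda|u|^{2\s}u\in C_t(I;l^p_h)$, so it is a strong solution and coincides with the Picard solution by the Lipschitz bound and Gronwall — no auxiliary norm or space is needed. Finally, the blow-up alternative is the standard one: if $T_{\max}<\infty$ but $\limsup_{t\to T_{\max}}\|u(t)\|_{l^p_h}=:M<\infty$, then picking $t_0$ close to $T_{\max}$ with $\|u(t_0)\|_{l^p_h}\le M$ and restarting the local theory from $t_0$ gives a solution on $[t_0,t_0+T(M)]$ extending past $T_{\max}$, a contradiction; hence $\|u(t)\|_{l^p_h}\to\infty$. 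I do not expect a genuine obstacle here: the entire point is that discretization makes $-\Delta_h$ bounded and $l^p_h\hookrightarrow l^\infty_h$, which trivializes the functional-analytic input that is delicate in the continuous setting; the only mild care needed is to record that all constants (e.g. in $l^p_h\hookrightarrow l^\infty_h$ and in $\|\Delta_h\|_{l^p_h\to l^p_h}$) are finite for fixed $h$, which is all that is claimed.
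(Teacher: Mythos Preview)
Your proposal is correct and follows essentially the same contraction-mapping argument on the Duhamel formulation in $C_t([-T,T];l^p_h)$ that the paper uses. Two cosmetic differences are worth noting. First, you absorb the potential into the linear part and work with the group $e^{-itH}$ generated by the bounded operator $H=-\Delta_h+V$, whereas the paper keeps only $e^{-it\Delta_h}$ as the linear flow and treats $V_nu_n$ as part of the nonlinearity $F(u)$; for local theory both choices are equivalent. Second, you obtain the linear bound $\|e^{-itH}\|_{l^p_h\to l^p_h}\le e^{|t|\|H\|}$ directly from boundedness of $H$, while the paper invests in Lemma~\ref{lem2.1} to get the sharper $p$-dependent bound $\|e^{-it\Delta_h}\|_{l^p_h\to l^p_h}\le e^{2d|1-\tfrac2p|h^{-2}|t|}$ via an energy-type computation. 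Your trivial bound is perfectly adequate for Theorem~\ref{thmLOC}; the paper's refinement matters only later, where the same nonlinear a~priori estimate (Lemma~\ref{lem3.1}) is needed to produce the explicit global bound~\eqref{Bouned-NLS-lp}. One minor correction: with the paper's normalization $\|u\|_{l^p_h}=(\sum_n|u_n|^p)^{1/p}$ the embedding $l^p_h\hookrightarrow l^\infty_h$ has constant~$1$, independent of $h$.
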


The results in Theorem \ref{thmLOC} exhibit significant differences from the local well-posedness results of the continuous nonlinear Schr\"odinger equation (NLS):
\begin{equation}\label{def:NLS}
\left\{
\begin{aligned}
& iu^{\prime}(t)-\Delta u+V(x)u+\lambda |u|^{p}u =0, \\
& u(0)=u_{0},
\end{aligned}
\right.
\end{equation}
with $\lambda=\pm 1$, $p>0$. Here $u(t,x):\R\times \R^d\rightarrow \mathbb{C}$ is a complex-valued function.

It has been established that the Cauchy problem \eqref{def:NLS} with $V=0$ is well-posed for general initial data in the space $H^s(\R^d), s\ge s_c$ with $s_c=\frac{d}{2}-\frac{2}{p}$, as demonstrated by Cazenave and Weissler \cite{1988The}. 
Moreover, Christ, Colliander, and Tao \cite{2003Ill} showed the existence of initial data in $H^s(\R^d)$, where $s<s_c$, leading to ill-posedness in the Cauchy problem of NLS \eqref{def:NLS}. Furthermore, H\"ormander \cite{MR0121655} established ill-posedness in the context of $L^p$ spaces for any $p\neq 2$ . For more insights, please refer to recent research presented in \cite{DSS2020}. Nevertheless, our findings indicate that for DNLS, local well-posedness remains valid across all dimensions $d\geq 1$ and for any $p\geq 1$. 
Additionally, in order to ensure the well-posedness of our approach, our assumption concerning the potential is relatively lenient, requiring only boundedness. In contrast, within the context of the continuous version,  the potential assumption is notably more stringent. Specifically, it is often imperative to incorporate decay assumptions on the function $V(x)$ to guarantee favorable properties of the spectrum of linear operators. Related results, one may refer to \cite{1991Decay} and the references therein.  
These suggest that solutions of the discrete equation may exhibit greater stability compared to solutions of the continuous equation. The forthcoming global well-posedness result will further corroborate this observation, as follows:

\begin{thm}\label{thm1.3} Let $1\le p\leq+\infty$ and $\lambda =\pm 1$,   then the Cauchy problem \eqref{defDL} is globally well-posed in $l^p_h$. Moreover, the following inequality holds:
\begin{align}\label{Bouned-NLS-lp}
\norm{u(t)}_{l^p_h}\leq e^{2d|t|/h^2}\norm{u_{0}}_{l^p_h}, \quad \mbox{ for any } t\in \R.
\end{align}
\end{thm}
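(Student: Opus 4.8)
The plan is to derive the global result from the local well-posedness of Theorem \ref{thmLOC} by establishing the a priori bound \eqref{Bouned-NLS-lp}, which by the blow-up alternative in Definition \ref{def1} immediately forces $T_{\min}=T_{\max}=+\infty$. So the entire content is the $l^p_h$ estimate on the solution, and the mechanism is that the nonlinearity $\lambda|u_n|^{2\s}u_n$ is gauge-type and contributes nothing to the modulus, while the discrete Laplacian and the bounded potential are bounded operators on every $l^p_h$.

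Concretely, I would work with a strong solution $u$ on its maximal interval and fix $p<\infty$ (the case $p=\infty$ is handled separately or by a limiting argument). The cleanest route is to multiply the equation by $|u_n|^{p-2}\overline{u_n}$, sum over $n\in h\Z^d$ (with the lattice measure $h^d$), take real parts, and observe that $\re\big(i u_n' \,|u_n|^{p-2}\overline{u_n}\big) = \tfrac1p \tfrac{d}{dt}|u_n|^p$ pointwise, while the nonlinear term $\re\big(\lambda|u_n|^{2\s}u_n\,|u_n|^{p-2}\overline{u_n}\big) = \lambda|u_n|^{2\s+p}$ is real — wait, that is not purely imaginary, so let me instead multiply by $|u_n|^{p-2}\overline{u_n}$ and take the \emph{imaginary} part of the whole equation. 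Then $\im(iu_n') $ pairs against $|u_n|^{p-2}\overline{u_n}$ to give the $\tfrac{d}{dt}$ of the $l^p$ norm, the nonlinear term drops out because $|u_n|^{2\s}u_n\cdot|u_n|^{p-2}\overline{u_n} = |u_n|^{2\s+p}$ is real, the potential term $V_n u_n \cdot |u_n|^{p-2}\overline u_n = V_n|u_n|^p$ is real and also drops out, and one is left with only the contribution of $\Delta_h$. For that term I would use the pointwise bound $|\Delta_h u_n|\le \tfrac{2d}{h^2}\sup_m|u_m|$ combined with Hölder on the lattice, or more simply the operator-norm bound $\|\Delta_h\|_{l^p_h\to l^p_h}\le \tfrac{4d}{h^2}$ — actually $\le \tfrac{2d}{h^2}\cdot 2 = \tfrac{4d}{h^2}$ from \eqref{defLAP} since each of the $2d$ shifts is an $l^p_h$-isometry and the diagonal term contributes $\tfrac{2d}{h^2}$; this yields $\big|\tfrac{d}{dt}\|u(t)\|_{l^p_h}\big|\le \tfrac{2d}{h^2}\|u(t)\|_{l^p_h}$ after dividing by $p\|u\|_{l^p_h}^{p-1}$ and bounding the pairing. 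Grönwall then gives \eqref{Bouned-NLS-lp}.

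An alternative and perhaps more robust argument, which also cleanly covers $p=\infty$ and avoids differentiating the norm at points where $u_n$ vanishes, is to rewrite \eqref{defDL} in Duhamel form isolating the diagonal part: write $-\Delta_h u_n + V_n u_n = \tfrac{2d}{h^2}u_n + B u_n$ where $B$ collects the off-diagonal hopping terms and the potential, note $\|B\|_{l^p_h\to l^p_h}\le \tfrac{2d}{h^2}+\|V\|_{\infty}$ is bounded (here I would actually prefer to absorb everything: $H$ itself is bounded on $l^p_h$ with $\|H\|_{l^p_h\to l^p_h}\le \tfrac{4d}{h^2}+\|V\|_\infty$), and treat the full equation $iu' = Hu + \lambda\,\mathcal N(u)$ as an ODE in the Banach space $l^p_h$. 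Then $\tfrac{d}{dt}\|u\|_{l^p_h}\le \|Hu\|_{l^p_h} + \|\mathcal N(u)\|_{l^p_h} = \|Hu\|_{l^p_h} + \||u|^{2\s}u\|_{l^p_h}$; the trouble here is that the nonlinear term does not obviously drop out at the level of the norm, so one still wants the pointwise/pairing computation of the previous paragraph to kill it. I would therefore present the $\im$-pairing computation as the main step and only use the ODE/boundedness framework to justify that $t\mapsto\|u(t)\|_{l^p_h}$ is absolutely continuous and that the formal manipulations are legitimate for a strong solution.

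The main obstacle is essentially bookkeeping rather than a genuine difficulty: justifying the differentiation-under-the-sum and the chain rule for $t\mapsto \sum_n |u_n(t)|^p$ when some coordinates pass through zero (for $1\le p<2$ the map $z\mapsto|z|^p$ is not $C^1$), and handling the endpoint $p=\infty$ where one cannot differentiate the sup at all. For the first issue I would either restrict to $p\ge 2$ first and then obtain $1\le p<2$ by the interpolation/density already implicit in the local theory, or use the standard regularization $|z|\rightsquigarrow(|z|^2+\delta)^{1/2}$ and let $\delta\to0$. For $p=\infty$, I would pass to the limit $p\to\infty$ in \eqref{Bouned-NLS-lp} using $\|u_0\|_{l^p_h}\to\|u_0\|_{l^\infty_h}$ for finitely-supported data and a density/continuity argument, or argue directly with the $l^\infty$ mild-solution formulation. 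Once the a priori bound is in hand, global existence is immediate from Theorem \ref{thmLOC} and the blow-up alternative, completing the proof.
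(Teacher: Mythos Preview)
Your proposal is correct and essentially identical to the paper's argument: the paper's Lemma \ref{lem3.1} multiplies \eqref{defDL} by $|u_n|^{p-2}\overline{u_n}$, takes imaginary parts so that the real terms $V_n|u_n|^p$ and $\lambda|u_n|^{2\s+p}$ drop out, bounds the surviving off-diagonal shift terms of $\Delta_h$ by Young's inequality to get $\partial_t\sum_n|u_n|^p\le \tfrac{2dp}{h^2}\sum_n|u_n|^p$, and handles $p=\infty$ by passing to the limit in the $p$-uniform bound. The only cosmetic difference is that the paper then runs an explicit contradiction/extension argument at $T_*-\varepsilon$ rather than simply invoking the blow-up alternative from Definition \ref{def1}, but these are equivalent.
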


\begin{remark}
The bound \eqref{Bouned-NLS-lp} is clearly not sharp, since it is uniformly bounded in $l^2$. We conjecture that 
a finer estimate should be 
\begin{align*}
\norm{u(t)}_{l^p_h}\leq e^{2d|1-\frac 2p| h^{-2}|t|}\norm{u_{0}}_{l^p_h}, \quad \mbox{ for any } t\in \R,
\end{align*}
which matches the linear estimate presented in Lemma \ref{lem2.1}. 
\end{remark}

This theorem indicates that the long-time behavior of solutions in DNLS significantly deviates from that of NLS. It is widely recognized that, in the focusing case of NLS, there are solutions that undergo blow-up as $p$ surpasses the mass-critical power of $\frac{4}{d}$, as discussed in \cite{DWZ2016, 1977Glassey} and related references. This collapse is attributed to the presence of small wavelengths and large frequencies. For instance, consider the continuous NLS \eqref{def:NLS} with $p=\frac{4}{d}$. Research has established that solutions to \eqref{def:NLS} exhibit global existence when $\|u_0\|_{L^2_x}<2\pi$, while blow-up solutions can arise when $\|u_0\|_{L^2}\geq 2\pi$, as indicated in \cite{Caz03book}. However, our findings reveal that for discrete equations, global well-posedness remains valid across any dimension $d\geq 1$ and for any $p\geq 1$.

Numerous studies in the literature \cite{MR1199033, MR2983017, MR0762363} have also demonstrated that as the lattice step size $h$ approaches zero, equation \eqref{defDL} converges to equation \eqref{def:NLS}. The continuum limit of DNLS is a pivotal subject in theoretical research, and this matter has been extensively explored, see for examples \cite{YH2019, KLS2013}. As a result, our findings also take into account the relationship between the results and the step size $h$, even though prior research frequently employed $h=1$ as the standard setting.

\subsection{Well-posedness theory of discrete nonlinear Klein-Gordon equation}
Next, we turn our attention to the discrete nonlinear Klein-Gordon (DKG) equation which takes the following form:
\begin{align}\label{defKG}
\begin{split}
\left \{
\begin{array}{ll}
\partial_t^2 u_n(t)-\Delta_h u_n+V_n u_n+\lambda |u_n|^{2\sigma}u_n =0\\
u_n(0)=f_n,\quad \partial_t u_n(0)=g_n,
\end{array}
\right.
\end{split}
\end{align}
where $u=\{u_n\}_{n\in h\Z^d}:\R\times h\Z^d\rightarrow \R$ is real-valued,  $\s > 0, \lambda=\pm1$, and similar as above, $V=\{V_n\}_{n\in h\Z^d}$ is a bounded real-valued potential. In the following, we denote $(f,g)=\{(f_n,g_n)\}_{n\in h\Z^d}$. This equation is referred to as ``defocusing" when $\lambda=1$ and ``focusing" when $\lambda=-1$. When $V_n$ is near origin, the equation can be regarded as the discrete nonlinear wave equation with potential, which is also a subject of investigation in this paper.

The DKG has broad applications in physics, describing the behavior of fields and particles in discrete systems such as lattices \cite{M1989Statistical,1993Fluxon}. Furthermore, the equation holds particular interest due to its relativistic invariance and can be considered as the relativistic counterpart of the Schr\"odinger equation, as discussed in \cite{lei2023nonrelativistic}.

In contrast to the DNLS, the well-posedness theory of the DKG is considerably more limited. To the best of our knowledge, only small data global well-posedness results in the space $l^2\times l^2$ for dimensions up to four and under certain restrictions on the parameter $p$ are known, as shown in \cite{2015Dispersion, 2005Asymptotic}.

In our paper, we will establish well-posedness results for more general and larger initial data for the DKG. Our first theorem addresses the local well-posedness.:
\begin{thm}\label{thmLOCKG}
Let $1\le p \leq+\infty,0<h\leq 1$.
Suppose that $(f,g)\in l^p_h\times l^p_h$, then the Cauchy problem \eqref{defKG} is locally well-posed in $l^p_h\times l^p_h$.
\end{thm}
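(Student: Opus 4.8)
The plan is to treat the DKG equation \eqref{defKG} as a Banach-space-valued ODE on $l^p_h$ and apply a fixed-point (Picard iteration) argument, exactly in the spirit of the proof one expects for Theorem \ref{thmLOC}. The key structural observation is that on the lattice $h\Z^d$ the operator $-\Delta_h$ is a \emph{bounded} operator on every $l^p_h$: from \eqref{defLAP} one reads off $\norm{\Delta_h u}_{l^p_h}\le \tfrac{4d}{h^2}\norm{u}_{l^p_h}$ by the triangle inequality, and similarly multiplication by the bounded potential $V=\{V_n\}$ is bounded on $l^p_h$ with norm $\norm{V}_{l^\infty_h}$. Hence the linear part $L:=-\Delta_h+V$ generates a uniformly continuous group $e^{\pm itL}$ (or, after reducing to a first-order system, the wave/Klein--Gordon propagator) on $l^p_h$, and there is no derivative loss to contend with. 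The only genuinely nonlinear feature is the term $\lambda|u_n|^{2\sigma}u_n$, and here one uses that $l^p_h\hookrightarrow l^\infty_h$ for all $p$, so that $u\mapsto |u|^{2\sigma}u$ maps $l^p_h$ into $l^p_h$ and is locally Lipschitz on bounded sets: for $\norm{u}_{l^p_h},\norm{v}_{l^p_h}\le R$ one has the pointwise bound $\big||u_n|^{2\sigma}u_n-|v_n|^{2\sigma}v_n\big|\lesssim_\sigma (|u_n|^{2\sigma}+|v_n|^{2\sigma})|u_n-v_n|\le C(R,h)\,|u_n-v_n|$, whence $\norm{|u|^{2\sigma}u-|v|^{2\sigma}v}_{l^p_h}\le C(R,h)\norm{u-v}_{l^p_h}$.

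First I would rewrite \eqref{defKG} as a first-order system for $(u,\partial_t u)$ and recast the Cauchy problem in Duhamel (integral) form:
\begin{equation*}
u(t)=\cos(t\sqrt{L})\,f+\frac{\sin(t\sqrt{L})}{\sqrt{L}}\,g-\lambda\int_0^t \frac{\sin((t-s)\sqrt{L})}{\sqrt{L}}\,\big(|u(s)|^{2\sigma}u(s)\big)\,ds,
\end{equation*}
where $\sqrt{L}$ and $\sin(t\sqrt{L})/\sqrt{L}$ are defined by the bounded functional calculus for the bounded operator $L$ on $l^p_h$ (or, more elementarily, simply solve the $l^p_h$-valued linear ODE by the variation-of-constants formula without spectral calculus — since $L$ is bounded this is completely rigorous). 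Next I would set up the contraction on the ball $\{u\in C_t([-T,T];l^p_h):\norm{u}\le 2M\}$ with $M$ comparable to $\norm{f}_{l^p_h}+\norm{g}_{l^p_h}$, and use the above boundedness of the propagators together with the local Lipschitz estimate on the nonlinearity to show the Duhamel map is a contraction for $T=T(M,h,d,\norm{V}_{l^\infty_h})$ small enough. This yields existence and uniqueness of a strong solution on a maximal interval; the blow-up alternative follows by the standard continuation argument (if $T_{\max}<\infty$ and $\norm{u(t)}$ stayed bounded, the life-span lower bound could be reapplied to extend past $T_{\max}$), and continuous dependence follows from a Gronwall estimate on the difference of two solutions. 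The same fixed-point map produces the unconditional uniqueness required by Definition \ref{def1}, since the argument never used any auxiliary norm.

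I do not expect a serious obstacle here: the discreteness trivializes everything that is hard in the continuous Klein--Gordon setting (dispersive/Strichartz estimates, derivative loss, low-regularity nonlinear estimates). The only points that need a little care are (i) making sure all constants are tracked explicitly in $h$, since the paper emphasizes the $h$-dependence — in particular $\norm{L}_{l^p_h\to l^p_h}\le 4d/h^2+\norm{V}_{l^\infty_h}$ blows up as $h\to 0$, so the local existence time $T$ degenerates as $h\to0$, which is consistent with the continuum limit; and (ii) the restriction $0<h\le1$ in the statement, which I would use only to absorb the potential term cleanly into an $h$-dependent constant (or it is simply a normalization). If one wants the cleaner route, one can avoid the functional calculus entirely by diagonalizing the first-order system: write $w=(u,\partial_t u)^\top$, $w'=\mathcal{A}w+\mathcal{N}(w)$ with $\mathcal{A}=\begin{pmatrix}0&I\\-L&0\end{pmatrix}$ bounded on $l^p_h\times l^p_h$, so $e^{t\mathcal{A}}$ is given by a norm-convergent exponential series and the whole argument is a textbook Picard iteration for an ODE in a Banach space.
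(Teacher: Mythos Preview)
Your argument is correct, and is in fact the most natural ``soft'' proof: since $-\Delta_h$ and multiplication by $V$ are bounded on every $l^p_h$, and the nonlinearity $u\mapsto|u|^{2\sigma}u$ is locally Lipschitz on $l^p_h$ thanks to the embedding $l^p_h\hookrightarrow l^\infty_h$, the Cauchy problem \eqref{defKG} is literally an ODE in the Banach space $l^p_h\times l^p_h$ and Picard--Lindel\"of applies. The paper, however, takes a somewhat different route that is worth comparing. Rather than working with the matrix group $e^{t\mathcal A}$, it passes to the half-wave variable $\psi_n=(1-\Delta_h)^{-1/2}\partial_t u_n-iu_n$ (treating $(V_n-1)u_n$ as part of the forcing), so that $\psi$ satisfies a first-order equation with propagator $e^{-it\sqrt{1-\Delta_h}}$; the contraction is then run on $\psi$ using the linear estimate of Lemma~\ref{lem2.2}, $\|e^{-it\sqrt{1-\Delta_h}}\|_{l^p_h\to l^p_h}\le e^{Ch^{-1}|t|}$, together with the multiplier bound $\|(1-\Delta_h)^{-1/2}\|_{l^p_h\to l^p_h}\lesssim(1+|\ln h|)^{d/2}$ of Corollary~\ref{cor1}. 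Your route is shorter and needs none of this machinery; the price is in the $h$-dependence of the constants. The crude exponential $e^{t\mathcal A}$ with $\|\mathcal A\|_{l^p_h\times l^p_h}\sim h^{-2}$ only gives propagator bounds $e^{Ch^{-2}|t|}$ and a life-span of order $h^2$, whereas the paper's half-wave formulation yields the sharper $e^{Ch^{-1}|t|}$. This gain is precisely what is reused downstream in the global theory (Lemma~\ref{lem4.0}, Proposition~\ref{thm4.1}) to obtain the $e^{Ch^{-1}|t|}$ growth stated in Theorem~\ref{thm1.4}; so while your argument fully proves Theorem~\ref{thmLOCKG} as stated, the paper's extra investment in Lemma~\ref{lem2.2} and Corollary~\ref{cor1} is not idle.
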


In order to draw a comparison with the impact of discretization in DNLS, we also present several well-posedness results for the continuous version of the Klein-Gordon equation (KG). The continuous Klein-Gordon equation is defined as follows:
\begin{align}\label{def:KG}
\begin{split}
\left \{
\begin{array}{ll}
\partial_t^2 u(t)-\Delta u+m u+\lambda |u|^pu =0\\
u(0)=u_0,\quad \partial_t u(0)=u_1,
\end{array}
\right.
\end{split}
\end{align}
where $u(t,x):\R\times \R^d\rightarrow \R$, the constants $m\ge 0$ and $\lambda=\pm 1$. 

Similar to the NLS, one should not expect to solve the Cauchy problem \eqref{def:KG} in $L^p$-based Sobolev spaces when $p\neq 2$. However, in the DNLS, we observe that the breakdown of the linear flow from $l^p_h$ to $l^p_h$ can be alleviated through discretization. This enables us to establish local well-posedness in the space $l^p_h\times l^p_h$ for any $1\leq p\leq +\infty$.

Regarding global well-posedness, one might assume that the same mechanism for achieving global results, as seen in the DNLS, could be applicable to the DKG equation. However, we've discovered that the role of discretization in the DKG equation differs from that in the DNLS. In the case of DNLS, discretization averts solution blow-up. Nonetheless, for DNLS, we have managed to establish more comprehensive well-posedness results in the defocusing scenario. However, in the focusing scenario, blow-up solutions continue to exist. 
Our specific results are as follows:

In the defocusing case, we proceed to establish the global well-posedness of the equation in the space $l^p_h\times l^p_h$ for any $2\leq p\leq 2\s+2$. The theorem is presented as follows:
\begin{thm}\label{thm1.4}
 Let $\s\geq 0,0<h\leq 1$,  $\lambda = 1$, and $2\leq p\leq 2\s+2$. Moreover, fix $\delta_0>0$ and assume that
\begin{align}\label{assump-Vn-KG-1}
\inf_{n\in h\Z^d}\big(h^2 V_n+2d)> 0.
\end{align}
Suppose that $(f,g)\in l^p_h\times l^p_h$, then the Cauchy problem  \eqref{defKG} is globally well-posed in $(f,g)\in l^p_h\times l^p_h$.  Moreover, 
there holds 
\begin{align*}
\norm{\big(u(t),\partial_t u(t)\big)}_{l^p_h\times l^p_h}\leq e^{Ch^{-1}|t|} \norm{\big(f,g\big)}_{l^p_h\times l^p_h}, \quad \mbox{ for any } t\in \R,
\end{align*}
where the constant $C>0$ is not dependent on $t,h$ and $(f,g)$.
\end{thm}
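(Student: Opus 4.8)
The plan is to reduce global well‑posedness to a quantitative a priori estimate and then prove that estimate from the conserved energy, the coercivity supplied by \eqref{assump-Vn-KG-1}, the defocusing sign $\lambda=1$, and the observation that the Klein–Gordon evolution is governed by the \emph{square root} $\sqrt{-\Delta_h+V+2d/h^2}$, an operator of size $O(h^{-1})$ rather than $O(h^{-2})$. By Theorem~\ref{thmLOCKG}, \eqref{defKG} has a unique maximal strong solution in $C_t(I;l^p_h\times l^p_h)$, $I=(-T_{\min},T_{\max})$, and by the blow‑up alternative of Definition~\ref{def1} it extends globally once one knows $\norm{(u(t),\partial_t u(t))}_{l^p_h\times l^p_h}$ stays finite on bounded subintervals; so the whole statement follows from the displayed a priori bound, which I would establish by a Gronwall argument.

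\emph{The coercive modified energy ($p=2$).} Pairing \eqref{defKG} with $\partial_t u_n$ and summing with weight $h^d$, the self‑adjointness and nonnegativity of $-\Delta_h$ together with $\lambda=1$ give conservation of
\[
\mathcal{E}_0(t)=\tfrac12\norm{\partial_t u(t)}_{l^2_h}^2+\tfrac12\langle(-\Delta_h+V)u(t),u(t)\rangle_{l^2_h}+\tfrac1{2\s+2}\norm{u(t)}_{l^{2\s+2}_h}^{2\s+2}.
\]
Since $V$ may be negative this need not be coercive, so set $H_h:=-\Delta_h+V+\tfrac{2d}{h^2}$ and $\mathcal{E}(t):=\mathcal{E}_0(t)+\tfrac d{h^2}\norm{u(t)}_{l^2_h}^2$. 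Writing $-\Delta_h=h^{-2}(2dI-S)$ with $\norm S_{l^2\to l^2}\le 2d$, hypothesis \eqref{assump-Vn-KG-1} in the form $\inf_n(h^2V_n+2d)\ge\delta_0>0$ yields $\tfrac{\delta_0}{h^2}I\le H_h\le\tfrac{C_1}{h^2}I$, hence $\mathcal{E}(t)\ge\tfrac12\norm{\partial_t u}_{l^2_h}^2+\tfrac{\delta_0}{2h^2}\norm u_{l^2_h}^2+\tfrac1{2\s+2}\norm u_{l^{2\s+2}_h}^{2\s+2}$. Differentiating and using conservation of $\mathcal{E}_0$, $\mathcal{E}'(t)=\tfrac{2d}{h^2}\langle u,\partial_t u\rangle_{l^2_h}\le\tfrac{2d}{h^2}\norm u_{l^2_h}\norm{\partial_t u}_{l^2_h}$; inserting the coercivity bounds $\norm u_{l^2_h}\le h\sqrt{2\mathcal{E}/\delta_0}$ (this is where the missing power of $h$ is recovered) and $\norm{\partial_t u}_{l^2_h}\le\sqrt{2\mathcal{E}}$ gives $\mathcal{E}'(t)\le\tfrac{4d}{\sqrt{\delta_0}\,h}\mathcal{E}(t)$, so $\mathcal{E}(t)\le e^{Ch^{-1}|t|}\mathcal{E}(0)$ with $C$ depending only on $d,\delta_0$. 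Unwinding the coercivity bounds controls $\norm{u(t)}_{l^2_h}$, $\norm{\partial_t u(t)}_{l^2_h}$ and $\norm{u(t)}_{l^{2\s+2}_h}$ by $e^{C'h^{-1}|t|}$ times initial quantities, settling $p=2$.

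\emph{General $2\le p\le2\s+2$.} For $p>2$ the data need not lie in $l^2_h$, so $\mathcal{E}(0)$ may be infinite and the above does not apply directly. I would split $u=u_{\mathrm{lin}}+v$, with $u_{\mathrm{lin}}$ solving the linear equation $\partial_t^2u_{\mathrm{lin}}-\Delta_hu_{\mathrm{lin}}+Vu_{\mathrm{lin}}=0$ with data $(f,g)$. For $u_{\mathrm{lin}}$ a direct $l^p_h$ estimate closes: with $a=\norm{u_{\mathrm{lin}}}_{l^p_h}$, $b=\norm{\partial_t u_{\mathrm{lin}}}_{l^p_h}$ one has $a'\le b$ and $b'\le\|\Delta_hu_{\mathrm{lin}}\|_{l^p_h}+\|Vu_{\mathrm{lin}}\|_{l^p_h}\le\tfrac{K}{h^2}a$, and the $2\times2$ structure $a'\lesssim b$, $b'\lesssim h^{-2}a$ again forces the effective rate $\sqrt{h^{-2}}=h^{-1}$ (via $\tfrac d{dt}(\tfrac{\sqrt K}{h}a+b)\le\tfrac{\sqrt K}{h}(\tfrac{\sqrt K}{h}a+b)$), so $\norm{(u_{\mathrm{lin}},\partial_t u_{\mathrm{lin}})}_{l^p_h\times l^p_h}\lesssim e^{Ch^{-1}|t|}\norm{(f,g)}_{l^p_h\times l^p_h}$. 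The remainder $v$ has zero data, hence finite $l^2_h$‑energy; one estimates it in $l^2_h\cap l^p_h$ by Duhamel and a bootstrap, using that $|u|^{2\s}u\in l^2_h$ whenever $u\in l^p_h$ (as $p\le4\s+2$), the defocusing sign (so the extra potential‑energy term has a good sign and cannot worsen the rate), the functional–calculus bound $\big\|\tfrac{\sin(t\sqrt B)}{\sqrt B}\big\|_{l^2\to l^2}\lesssim h\,e^{Ch^{-1}|t|}$ for $B=-\Delta_h+V$ (finite even where $B<0$, with instability rate $\le\sqrt{2d}/h$), and the $p=2$ energy control above; interpolating $l^p_h$ between $l^2_h$ and $l^{2\s+2}_h$, which is legitimate precisely when $2\le p\le2\s+2$, then closes the bound.

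\emph{Main obstacle.} The two delicate points are intertwined. First, extracting the \emph{sharp} rate $h^{-1}$: a naive energy/Gronwall estimate on \eqref{defKG} only yields $h^{-2}$, and it is the second‑order (equivalently, square‑root) structure together with the coercivity $H_h\ge\delta_0h^{-2}$ from \eqref{assump-Vn-KG-1} that recovers one power of $h$. Second, controlling the super‑linear defocusing nonlinearity in $l^p_h$ for $p\neq2$, where no conservation law is directly available: here the hypotheses $p\le2\s+2$ and $\lambda=1$ are essential, the former to keep $|u|^{2\s}u$ in $l^2_h$ and to make the $l^2$–$l^{2\s+2}$ interpolation work, the latter to ensure the nonlinear energy is a benign (positive) quantity. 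I expect this second point — closing the nonlinear piece $v$ without a conserved $l^p$ quantity — to be the hardest part of the argument.
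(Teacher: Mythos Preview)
Your overall architecture—split $u=u_{\mathrm{lin}}+v$ with $u_{\mathrm{lin}}$ the linear flow on the true data and $v$ the zero-data remainder, then estimate the two pieces separately—is exactly the paper's. Your direct $2\times2$ ODE argument for $u_{\mathrm{lin}}$ (set $a=\|u_{\mathrm{lin}}\|_{l^p_h}$, $b=\|\partial_tu_{\mathrm{lin}}\|_{l^p_h}$ and differentiate $\tfrac{\sqrt K}{h}a+b$) is a nice simplification of the paper's route, which instead passes through the first-order variable $\psi_n=(1-\Delta_h)^{-1/2}\partial_t u_n-iu_n$ and invokes Lemma~\ref{lem2.2} and Corollary~\ref{cor1}. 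Your separate $p=2$ modified-energy computation is also correct and slightly sharper than the paper's Lemma~\ref{lem2.2.3}.

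The gap is the remainder $v$. ``Duhamel and a bootstrap'' leads to the \emph{superlinear} integral inequality
\[
\|v(t)\|_{l^2_h}\ \lesssim\ h\int_0^t e^{Ch^{-1}(t-s)}\big(\|u_{\mathrm{lin}}(s)\|_{l^p_h}+\|v(s)\|_{l^2_h}\big)^{2\sigma+1}\,ds,
\]
which does not close for large $t$ when $\sigma>0$. The paper avoids this by building the defocusing potential \emph{of the full solution} into the modified energy for $w:=v$,
\[
E(t)=\sum_n\Big(\tfrac{V_n+2dh^{-2}}{2}|w_n|^2+\tfrac12|\partial_tw_n|^2+\tfrac{1}{2\sigma+2}|u_n|^{2\sigma+2}\Big).
\]
Multiplying the $w$-equation by $\partial_tw_n=\partial_tu_n-\partial_tv_n$, the dangerous piece $|u_n|^{2\sigma}u_n\,\partial_tu_n$ becomes an exact time derivative absorbed into $E$, and only the cross term $|u_n|^{2\sigma}u_n\,\partial_tv_n$ survives; Young's inequality with exponents $(2\sigma+2,\tfrac{2\sigma+2}{2\sigma+1})$ bounds it by $\tfrac{1}{2\sigma+2}\|u\|_{l^{2\sigma+2}_h}^{2\sigma+2}+C\|\partial_tv\|_{l^{2\sigma+2}_h}^{2\sigma+2}$, the first reabsorbed into $E$ and the second already controlled \emph{linearly} by your $u_{\mathrm{lin}}$ estimate at the endpoint exponent $2\sigma+2$. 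This gives the \emph{linear} Gronwall $E'\le \tfrac{2d}{\delta_0 h}E+\text{known}$, which does close globally with rate $h^{-1}$. This is exactly where the hypothesis $p\le 2\sigma+2$ enters: it forces $(f,g)\in l^{2\sigma+2}_h\times l^{2\sigma+2}_h$, so the forcing is finite. Finally, no interpolation is needed to pass back to $l^p_h$: since $p\ge2$ one simply uses $\|w\|_{l^p_h}\le\|w\|_{l^2_h}$.
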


Furthermore, this theorem marks a significant advancement in our comprehension of the well-posedness of the discrete equation. In contrast to prior findings restricted by dimensions and the size of the initial data, our theorem is applicable to any dimension $d\geq 1$ and general initial values in $l^p_h$. Moreover, it eases the constraints on the potential, which only needs to satisfy \eqref{assump-Vn-KG-1}, encompassing a broader range of scenarios.

We conjecture that a similar conclusion holds for $p>2\sigma+2$. However, our current method fails as it relies on the basic energy estimate. 
For the focusing case, define the erergy
\begin{align*}
E\left(u,\p_tu\right)=\frac{1}{2}\sum_{n\in h\Z^d}\left(\aabs{\p_t u_n}^2+\frac{1}{h^2}\sum_{j=1}^d\left(u_{n+he_j}-u_n\right)^2+V_n|u_n|^2+\frac{\lambda}{\s +1}|u_n|^{2\s+2}\right),
\end{align*}
which is conserved under the nonlinear flow \eqref{defKG}.

Then our conclusion is as follows:
 \begin{thm}\label{mainthm3}
Let $d\ge 1, \lambda = -1$. Moreover, assume that  
$$
\inf_{n\in h\Z^d} V_n> 0.
$$
 Suppose that $(f,g)\in l^2_h\times l^2_h$ with $E(f,g)<0$, then the solution to the Cauchy problem  \eqref{defKG} with initial data  $(f,g)$ blows up at time $T_*<\infty$. Moreover,
 $$
 \lim\limits_{t\to T_*} \|u_n(t)\|_{l^2_h}=+\infty.
 $$  
 \end{thm}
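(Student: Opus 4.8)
The plan is to run the classical Levine-type concavity argument, adapted to the discrete lattice. Define the ``moment'' functional $M(t)=\sum_{n\in h\Z^d}|u_n(t)|^2=\|u(t)\|_{\ell^2_h}^2$ (times $h^d$, to be precise, using the normalization of $\ell^2_h$ from Section 2.1). Because the linear part is a bounded operator and $(f,g)\in\ell^2_h\times\ell^2_h$, Theorem \ref{thmLOCKG} together with the $\ell^2$ conservation law furnished by the energy $E$ guarantees a local $\ell^2_h$-solution, so $M$ is well-defined and (since the flow is smooth in $t$) twice differentiable on the maximal interval. I would first compute
\begin{align*}
M'(t)&=2\sum_{n}\partial_t u_n\, u_n,\\
M''(t)&=2\sum_n\big(|\partial_t u_n|^2+u_n\,\partial_t^2 u_n\big)
=2\sum_n|\partial_t u_n|^2-2\sum_n u_n\big(-\Delta_h u_n+V_n u_n-|u_n|^{2\sigma}u_n\big).
\end{align*}
Using the discrete summation-by-parts identity $-\sum_n u_n\Delta_h u_n=\tfrac1{h^2}\sum_n\sum_{j}(u_{n+he_j}-u_n)^2$ (no boundary terms on $h\Z^d$ for $\ell^2$ sequences), this becomes
\begin{align*}
M''(t)=2\sum_n|\partial_t u_n|^2-\tfrac{2}{h^2}\sum_n\sum_j(u_{n+he_j}-u_n)^2-2\sum_n V_n|u_n|^2+2\sum_n|u_n|^{2\sigma+2}.
\end{align*}

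Next I would eliminate the indefinite terms in favor of the (conserved, negative) energy. Since $E(u,\partial_t u)=E(f,g)=:E_0<0$ for all $t$, and $\lambda=-1$ so the nonlinear term in $E$ is $-\tfrac1{\sigma+1}\sum|u_n|^{2\sigma+2}$, I substitute $\sum_n\big(\tfrac1{h^2}\sum_j(u_{n+he_j}-u_n)^2+V_n|u_n|^2\big)=2E_0-\sum_n|\partial_t u_n|^2+\tfrac1{\sigma+1}\sum_n|u_n|^{2\sigma+2}$ into the expression for $M''$, obtaining
\begin{align*}
M''(t)=(4+\tfrac{2}{?})\dots
\end{align*}
— concretely, $M''(t)=(2+2)\sum_n|\partial_t u_n|^2-4E_0+\big(2-\tfrac{2}{\sigma+1}\big)\sum_n|u_n|^{2\sigma+2}\ge 4\sum_n|\partial_t u_n|^2-4E_0$, so in particular $M''(t)\ge -4E_0>0$ (I use here $\sigma\ge 0$, which makes the coefficient $2-\tfrac{2}{\sigma+1}\ge 0$, and $V_n\ge 0$ is not even needed at this step but is available). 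To get a genuine concavity inequality of the form $(M^{-\alpha})''\le 0$ I would instead keep the kinetic term and apply Cauchy–Schwarz: $\big(M'(t)\big)^2=4\big(\sum_n u_n\partial_t u_n\big)^2\le 4\,M(t)\sum_n|\partial_t u_n|^2$, hence $M(t)M''(t)-(1+\alpha)(M'(t))^2\ge M(t)\big[(4-4(1+\alpha))\sum_n|\partial_t u_n|^2-4E_0\big]$, which is $\ge 0$ once $\alpha\le 0$ is... rather, once we pick $\alpha>0$ small enough that $4-4(1+\alpha)\ge$ is handled — the standard choice being to absorb the surplus kinetic energy; since here the kinetic coefficient ($4$) exceeds $2(1+\alpha)$ for small $\alpha>0$ as soon as... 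I would verify $4\ge 2(1+\alpha)$, i.e. $\alpha\le 1$, suffices together with $-4E_0>0$. Thus $\big(M^{-\alpha}\big)''=-\alpha M^{-\alpha-2}\big(MM''-(1+\alpha)(M')^2\big)\le 0$ on the maximal interval.

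Finally, I would arrange the sign of $M'$. If $M'(t_0)>0$ at some $t_0$, then since $M^{-\alpha}$ is positive, concave, and has negative derivative at $t_0$, it must hit zero in finite forward time, forcing $M(t)\to\infty$, i.e. $\|u(t)\|_{\ell^2_h}\to\infty$ at some $T_*<\infty$; combined with the blow-up alternative in Definition \ref{def1}(2) this proves the theorem. To secure $M'>0$ eventually: from $M''\ge -4E_0>0$ we get $M'(t)\ge M'(0)-4E_0 t\to+\infty$, so such a $t_0$ always exists (if $M'(0)\le 0$, wait until $t_0 = M'(0)/(4E_0)$ — wait, $-4E_0>0$ so $M'$ is increasing; pick $t_0$ with $M'(t_0)>0$), and then restart the concavity argument from $t_0$.

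The main obstacle I anticipate is not the algebra but making the concavity bookkeeping fully rigorous on the \emph{maximal} interval rather than on an a priori bounded one: one must know $M\in C^2$ and the energy identity holds up to $T_{\max}$, which follows from the local theory of Theorem \ref{thmLOCKG} (the solution is genuinely differentiable in $t$ since the equation is an ODE in $\ell^2_h$ with locally Lipschitz right-hand side), and one must handle the degenerate case $\sigma=0$ (linear-in-modulus nonlinearity) where the $|u_n|^{2\sigma+2}$ term contributes with coefficient $2-\tfrac{2}{\sigma+1}=0$ and blow-up is driven purely by $-4E_0>0$. A secondary point to check carefully is that the hypothesis $\inf_n V_n>0$ is what guarantees $E_0<0$ is compatible with the rest (it is used only through the substitution above and the fact that $\sum V_n|u_n|^2\ge 0$ if one prefers a cleaner inequality); I would state explicitly where, if anywhere, positivity of $V_n$ is essential versus merely convenient.
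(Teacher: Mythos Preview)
Your overall strategy (Levine concavity for $M(t)=\|u(t)\|_{\ell^2_h}^2$) is the same as the paper's, and your computation of $M''$ and the first energy substitution are correct: you reach
\[
M''(t)=4\sum_n|\partial_t u_n|^2-4E_0+\frac{2\sigma}{\sigma+1}\sum_n|u_n|^{2\sigma+2}.
\]
The gap is at the concavity step. You drop the last (nonnegative) term and try to show $MM''-(1+\alpha)(M')^2\ge 0$ for some $\alpha>0$. But Cauchy--Schwarz gives $(M')^2\le 4M\sum_n|\partial_t u_n|^2$, so your bound is
\[
MM''-(1+\alpha)(M')^2 \ge M\Big[-4\alpha\sum_n|\partial_t u_n|^2-4E_0\Big],
\]
and the kinetic coefficient is \emph{negative} for every $\alpha>0$. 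Since $\sum_n|\partial_t u_n|^2$ is not a priori bounded (indeed it will blow up), this is uncontrollable; your suggested condition ``$4\ge 2(1+\alpha)$'' comes from the wrong Cauchy--Schwarz constant ($(M')^2\le 4M\sum|\partial_t u_n|^2$, not $2M\sum|\partial_t u_n|^2$). Note also that $M''\ge -4E_0>0$ alone only yields quadratic growth of $M$, not finite-time blow-up.

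The fix --- and this is precisely what the paper does --- is \emph{not} to discard the nonlinear term but to use energy conservation once more to convert it back:
\[
\sum_n|u_n|^{2\sigma+2}=(\sigma+1)\Big(\sum_n|\partial_t u_n|^2+\tfrac1{h^2}\sum_{n,j}(u_{n+he_j}-u_n)^2+\sum_nV_n|u_n|^2-2E_0\Big).
\]
Substituting gives
\[
M''=(4+2\sigma)\sum_n|\partial_t u_n|^2+2\sigma\Big(\tfrac1{h^2}\sum_{n,j}(u_{n+he_j}-u_n)^2+\sum_nV_n|u_n|^2\Big)-(4+4\sigma)E_0,
\]
and here the hypothesis $\inf_nV_n>0$ is genuinely used to make the middle bracket nonnegative, yielding $M''\ge (4+2\sigma)\sum_n|\partial_t u_n|^2$. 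Now the concavity closes with $\alpha=\sigma/2$, since $(1+\alpha)\cdot 4=4+2\sigma$ matches the kinetic coefficient exactly; this gives $(M^{-\sigma/2})''\le 0$ and your endgame (find $t_0$ with $M'(t_0)>0$ via $M''>0$, then extrapolate the concave function $M^{-\sigma/2}$ to zero) goes through. So your proposal is essentially right but you located the role of $V_n>0$ in the wrong place and, as a consequence, threw away the term that carries the extra $2\sigma$ you need. (The case $\sigma=0$ you worry about is excluded by the standing assumption $\sigma>0$ in the setup of \eqref{defKG}.)
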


\begin{remark}
There exist a class of pairs $(f,g)$ with $E(f,g)<0$. Taking an example,  choose $g_n= 0$ and 
\begin{align*}
\begin{split}
f_n= \left \{
\begin{array}{ll}
(V_n\s+V_n)^{\frac{1}{\s}},\quad n = \underbrace{[0,\cdots ,0]}_d, \\
0,\quad n\neq \underbrace{[0,\cdots ,0]}_d,
\end{array}
\right.
\end{split}
\end{align*}
then $E(f,g)<0$.
\end{remark}

Although we were not able to establish global well-posedness results for DKG that are applicable to all values of $p$ ranging from $1$ to $+\infty$, we discovered that the global well-posedness of DKG closely mirrors that of its continuous counterpart, the KG equation. In the continuous version, for cases where $p\leq \frac{4}{d-2}$, the global dynamics of the solutions have been thoroughly explored by numerous researchers. Specifically, it has been proven to exhibit global well-posedness in the energy space $H^1(\R^d)\times L^2(\R^d)$ for defocusing cases. However, in focusing cases, it has been shown that arbitrary initial data do not lead to global well-posedness. Instead, solutions below and above the ground energy threshold bifurcate into globally well-posed solutions and blow-up solutions. You can refer to relevant results in \cite{GV1985, 1989The, 1999Scattering, 2011Invariant, 1975Saddle, 1993The}.

\subsection{Novelty, ideas of proof}

In the following, we will elucidate the critical components of the proofs underlying our main theorems.

$\bullet$ Linear estimates  for the discrete Schr\"odinger and Klein-Gordon flows.

The key element in proving the well-posedness of DNLS and DKG is the establishment of new estimates for the linear operators $e^{-it\Delta_h}$ and $e^{-it\sqrt{1-\Delta_h}}$, which can provide boundedness estimates from $l^p_h$ to $l^p_h$ spaces. For the Schr\"odinger flow, we  obtain, for any $1\leq p\leq +\infty$, the following estimate:
\begin{align}\label{lp-bound_1}
\norm{e^{-i\De_h t}\phi}_{l^p_h}\le e^{C|t|}\norm{\phi}_{l^p_h},\quad \phi\in l^p_h(h\Z^d).
\end{align}
Here, $C>0$ is a constant that depends only on the dimensions $d$ and $h$. Especially,  this estimate holds uniformly $p$, which allows us to include the case $p=+\infty$. 
Similarly, for the Klein-Gordon flow, the estimate is as follows, for any $1\leq p\leq +\infty$:
\begin{align}\label{lp-bound}
\norm{e^{-it\sqrt{1-\Delta_h}}\phi}_{l^p_h} \leq e^{C|t|}\norm{\phi}_{l^p_h}, \quad \phi\in l^p_h(h\Z^d).
\end{align}

With the linear operator estimates, the results of local well-posedness in $l^p_h$ spaces can be directly obtained through the standard fixed-point method. It's important to note that these estimates are established using a direct and elementary approach, without relying on the spectral analysis of the linear operator. This is the reason we have addressed certain limitations in classical well-posedness research methods.

In the traditional research approaches for well-posedness of equations, dispersion estimates play a pivotal role. Proving such estimates often comes down to establishing decay estimates for the $L^\infty$ ($l^\infty_h$ in the discrete case) norm of the solution at time $t$ in relation to the $L^1$ ($l^1_h$ in the discrete case) norm of its initial data. For instance, in the continuous Schr\"odinger equation, it's well-known that for any $t\in \R\setminus{0}$, the following decay estimate holds:
\begin{align}\label{decay-LS-conti}
\norm{e^{-it\De}\phi}_{L_x^{\infty}}\leq C|t|^{-d/2}\norm{\phi}_{L_x^1},\quad \phi\in L^1(\R^d).
\end{align}
Well established $L^1\rightarrow L^\infty$  decay estimate give rise to a whole family of mixed space-time norm estimates, called Strichartz estimates \cite{ 1992Smoothing, 2019endpoint, Strichartz1977Restrictions}.  Strichartz estimates can be used in conjunction with a contraction mapping argument to prove global well-posedness for certain nonlinear equations with small initial data by a standard fixed point method.

In the discrete case with $V_n=0$, Stefanov and Kevrekidis \cite{2005Asymptotic} proved that:
\begin{align*}
\norm{e^{-it\De_h}\phi}_{l^\infty_h}\lesssim \left\langle t\right\rangle^{-d/3} \norm{\phi}_{l^1_h},\quad \phi\in l^1_h(\Z^d).
\end{align*}
This estimate, while strictly weaker than its continuous version \eqref{decay-LS-conti}, is considered sharp \cite{2005Asymptotic}. 
For DNLS with non-zero potential,  Pelinovsky and Stefanov \cite{2008PAOn} proved that
\begin{align}\label{desch}
\norm{e^{-it H} P_{ac}\phi}_{l^\infty_h}\lesssim \left\langle t\right\rangle^{-1/3}\norm{\phi}_{l^1_h},\quad \phi\in l^1_h(\Z),
\end{align}
where the ``generic" potentials $V_n$ decay sufficiently rapidly at infinity. Here, $P_{ac}$ represents the projection onto the absolutely continuous part of the spectrum. More recently, if $d=1$ and $V_n$ is quasiperiodic and analytically small enough (resulting in the Schr\"odinger operator having a purely absolutely continuous spectrum), Bambusi and Zhao \cite{MR4070305} obtained an estimate similar to \eqref{desch}. For a deeper exploration of related research, one may refer to \cite{Bambusi2013Asymptotic, 2008On, 2015Dispersion, MR2578796, 2006Dispersive}, and the references therein. The estimates mentioned above heavily depend on the assumption that the linear Schr\"odinger operator has an absolutely continuous spectrum (obviously $\De_h$ has absolutely continuous spectrum). However, if $V_n$ is random and $d=1$, the operator consistently possesses a pure point spectrum \cite{1993Localization, 1989Anewp, 1983Absen}. In cases where $d\geq 2$, it remains an open question whether the operator has an absolutely continuous part \cite{2000Schro}. If $V_n$ is almost periodic, singular continuous spectrum \cite{2015GlobalThe, 2017SharpPha}, or a mobility edge (energy level that separates absolutely continuous spectrum and pure point spectrum) \cite{2023Exact} may emerge, and thus it remains an open question whether the corresponding Schr\"odinger operator possesses Strichartz estimates.

Considerable research has also been conducted on the DKG equation. Decay estimates for this equation were originally formulated by Stefanov and Kevrekidis \cite{2005Asymptotic} in one-dimensional cases and were subsequently extended to higher dimensions ($d\ge 4$) by Cuenin and Ikromov \cite{2020Sharp}.  Similar to the DNLS, the decay rates in these estimates are weaker than those in the continuous version. There are only a few related results on the DKG with a potential, with one-dimensional estimates available in \cite{2015Dispersion}.

Certainly, owing to the significant reliance of Strichartz estimates on the continuous spectra of operators, existing findings impose specific constraints concerning the dimension and the potential $V_n$. In our presented approach within this paper, we incorporate the potential $V_n$ as an integral part of the nonlinearity within the equation. Consequently, we only necessitate that $V_n$ be bounded, thereby facilitating the establishment of well-posedness for the solution through our linear operator estimates \eqref{lp-bound_1} and \eqref{lp-bound} by employing Picard iterations. This approach allows us to circumvent the limitations associated with operator spectra and attain more broadly applicable well-posedness outcomes.


$\bullet$ A priori estimate for the nonlinear discrete Schr\"odinger and Klein-Gordon flows.

When considering the long-time behavior of the solution, we draw inspiration from the following observation. If we neglect the linear component and focus solely on the nonlinear flow, we encounter the following nonlinear Schr\"odinger equation (in the zero-dimensional case):
\begin{equation}\label{non-LS}
 \left\{\begin{aligned}
& i\partial_tu
=\lambda|u|^{2\s}u,
 && \\
 &u(0)=\phi. &&
 \end{aligned}\right.
\end{equation}
Here, we introduce the notation $\mathcal N_t$ to represent the flow as follows:
$$
\mathcal N_t(\phi)=e^{-i\lambda t|\phi|^{2\s}}\phi,
$$
then $\mathcal N_t(\phi)$ solves the equation \eqref{non-LS}. Additionally, regardless of whether $\lambda=1$ or $\lambda=-1$, we observe that $\|\mathcal N_t(\phi)\|_{l^p_h}=\|\phi\|_{l^p_h}$, which makes it evident that the solution $\mathcal N_t(\phi)$ enjoys global existence. Given the relatively weak influence of the linear flow, we establish the same phenomenon for the original nonlinear equations \eqref{defDL}. In paticular, a crucial observation for the DNLS is that we have an a priori estimate in $l^p_h$ norms:
$$
\norm{u(t)}_{l^p_h}\leq e^{C|t|}\norm{u_{0}}_{l^p_h},\quad 1\le p\le +\infty.
$$
This estimate obviously fails for the continuous NLS.   
 
%
%
%
%
%

In contrast to DNLS, the problem for DKG is even more intricate. In the case of DNLS, we were able to leverage the mass conservation law and a specific mechanism to establish a priori estimates in $l^p_h$. However, for DKG, there is no mass conservation law, and there is no analogous mechanism, as in the case of DNLS, to achieve such a priori estimates in $l^p_h\times l^p_h$.

Boundedness in $l^2_h\times l^2_h$ can be directly obtained by employing an energy-like estimate, as demonstrated in Lemma \ref{lem2.2.3} below. To extend these estimates to $l^p_h\times l^p_h$ where $p\neq 2$, we apply a ``linear-nonlinear decomposition" to the solution. Specifically, we decompose $u$ into two components: $u = w + v$, where $(v,\partial_t v) = S(t)(f,g)$, and $S(t)$ represents the linear operator of the Klein-Gordon solution.
The component $w_n$ satisfies the following equation:
\begin{align*}
\partial_{tt}w_n - \Delta_hw_n + V_nw_n = -|u_n|^{2\s}u_n,
\end{align*}
with zero initial data: $(w(0),\partial_tw(0)) = (0,0)$. To estimate $v$ and $w$, we employ different approaches. For $v$, we use the linear estimate \eqref{lp-bound}. For $w_n$, given its trivial initial data, we obtain the $l^2_h$-estimate by applying modified energy estimates.

$\bullet$ Blowing-up for the focusing nonlinear discrete  Klein-Gordon equations.

Now we consider the blowing-up in focusing case, motivated by the following observation. As previously discussed in the DNLS context, we consider the DKG in the zero-dimensional case, represented as:
\begin{equation}\label{non-KG}
 \left\{\begin{aligned}
& \partial_{tt}u
=|u|^{2\s}u,
 && \\
 &u(0)=f, \quad \partial_t u(0)=g. &&
 \end{aligned}\right.
\end{equation}
We denote the flow $\mathcal N_t(f,g)$ to be the solution to equation (\ref{non-KG}). By examining, we can easily verify that: 
\begin{align*}
\left[\mathcal N_t(f,g)^{-\s}\right]^{\prime\prime}=(\s+1)\mathcal N_t(f,g)^{-\s-2}\left(g^2-\frac{f^{2\s+2}}{2\s+2}\right).
\end{align*}
This equation leads to a crucial observation:
\begin{align}\label{KG-BP-zero}
g^2< \frac{1}{2\s+2}f^{2\s+2}\quad  \Longrightarrow \quad
 [\mathcal N_t(f,g)^{-\s}]^{\prime\prime}<0.
\end{align}
This observation indicates that $\mathcal N_t(\phi)$ blows up in finite time, revealing a completely different dynamic from the DNLS.

Hence, the key to proving Theorem \ref{mainthm3} hinges on establishing the following inequality:
$$
\Big[\Big(\sum_{n\in h\Z^d}|u_n(t)|^2\Big)^{-\beta}\Big]^{\prime\prime}<0, 
$$
under suitable conditions on the initial data and by carefully choosing an appropriate value for $\beta$, as demonstrated in \eqref{KG-BP-zero}. 
In Section \ref{sec:blowup}, we will demonstrate that if the initial data satisfies $E(f,g) < 0$, selecting $\beta = \sigma/2$ will meet the necessary conditions for the proof. 

\vskip 1cm

\section{Preliminary}
\subsection{Notation}
Let $C>0$ denote some constant, and write $C(a)>0$ for some constant depending on coefficient $a$. If $f\leq C g$, we write $f\lesssim g$.

The $l^p_h$ norm is defined as
$$\norm{u}_{l^p_h}\triangleq \left(\sum_{n\in h\Z^d}|u_n|^p\right)^{1/p},$$
where $u=\{u_n\}_{n\in h\Z^d}$. For $p=+\infty$, we define
$$
\norm{u}_{l^\infty_h}\triangleq \sup\limits_{n\in h\Z^d}\{|u_n|\}.
$$

For $I\subset R$, we use space $L_t^q-_h$ with the norm
$$\norm{u(t)}_{L_t^ql^p_h(I)}\triangleq \normo{\norm{u(t)}_{l^p_h}}_{L_t^q(I)}.$$

We recall the definition of the \emph{discrete Fourier transform} of a function $g\in l^2_h(h\Z^d)$, namely
\begin{align}\label{def:dFT}
\hat{g}(\x)\triangleq\sum_{n\in h\Z^d}g_n e^{-in\cdot \x},
\end{align}
where $\x\in \R^d $, and that we have an inversion formula: for all $n\in h\Z^d$,
\begin{align}\label{def:dFT-1}
g_n =\frac{h^d}{(2\pi)^d}\int_{[0, 2\pi h^{-1}]^d}\hat{g}(\x)e^{in\cdot\x}d\x.
\end{align}
Denote $I_h=[0, 2\pi h^{-1}]^d$, and the inner products in discrete and continuous versions,   and norm of $L^2(I_h)$ are defined by 
\begin{align*}
\langle f,g\rangle 
\triangleq &
\int_{I_h} f(x)\overline{g(x)}\, d x, 
\quad\mbox{if }\quad f,g \in L^2(I_h);\\
\langle f,g\rangle 
\triangleq&
 \sum_{n\in h\Z^d} f_n \overline{g_n},  
\quad\qquad\mbox{if }\quad f,g \in l^2_h;\\
\|f\|_{L^2(I_h)}
\triangleq  & \sqrt{\langle f,f\rangle} .
\end{align*}
Then the following standard properties of the Fourier transform are well known:  
\begin{align*}
\|f\|_{l^2_h} 
&= (2\pi h^{-1})^{-\frac d2}\big\|\hat f\big\|_{L^2(I_h)} && \mbox{(Plancherel's identity)}  \\
\sum\limits_{n\in h\Z^d} f_n \overline{g_n}  &  =(2\pi h^{-1})^{-d} \langle \hat f,\hat g\rangle  && \mbox{(Parseval's identity)}.
\end{align*}


\subsection{Discrete multiplier estimate}

In this subection, we consider the estimates on the discrete pseudo-differential operators.  
The estimates presented here which may be of interest by itself.  In particular, we focus our attention on the stepsize dependence estimates on the following basic operators 
$$
(1-\Delta_h)^\alpha \quad \mbox{ and }\quad  (1-\Delta_h)^{-\alpha},\quad \mbox{for some } \alpha>0.
$$

First, we have that 
\begin{lemma}\label{lem2.0}
Let $\alpha\in [0,1]$, $0< h\leq 1$ and $p\in [1,+\infty]$, then 
\begin{align*}
\norm{\left(1-\De_h\right)^\alpha f}_{l^p_h}\le \big(4dh^{-2}\big)^\alpha \norm{f}_{l^p_h}.
\end{align*}
\end{lemma}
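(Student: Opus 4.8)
The plan is to prove the bound by passing to the Fourier side and using the elementary fact that the discrete Laplacian $\Delta_h$ has a bounded symbol, together with the Hausdorff--Young-type duality between the endpoints $p=1$ and $p=\infty$ and interpolation. Concretely, writing $\widehat{\Delta_h f}(\xi) = m_h(\xi)\widehat f(\xi)$ with
\begin{align*}
m_h(\xi) = -\frac{2}{h^2}\sum_{j=1}^d\bigl(1-\cos(h\xi_j)\bigr),
\end{align*}
the operator $(1-\Delta_h)^\alpha$ is Fourier multiplication by $\bigl(1+|m_h(\xi)|\bigr)^\alpha$, and since $0\le 1 - \cos(h\xi_j)\le 2$ we have the uniform bound $1 \le 1+|m_h(\xi)| \le 1 + 4dh^{-2} \le 4dh^{-2}\cdot(1 + (4d)^{-1}h^2) $; for $0<h\le 1$ and $d\ge 1$ one checks $1 + 4dh^{-2} \le (4dh^{-2})$ is already too crude, so more carefully one should write $1+4dh^{-2}\le 4dh^{-2}+h^{-2}\cdot(h^2)\le$ hmm — the cleanest route is to bound $1+|m_h(\xi)|\le 4dh^{-2}$ directly using $h\le 1$ and $2d\ge 1$, since $1\le h^{-2}\le 2dh^{-2}$ and $4d h^{-2}-1 - 4dh^{-2}(1-\cos)\cdot$ — I would simply record $1+|m_h(\xi)|\le (4d)h^{-2}$ as following from $|m_h(\xi)|\le 4dh^{-2}$ and $1\le h^{-2}\le 4dh^{-2}$ when $0<h\le1$, $d\ge1$, and $\alpha\le 1$ ensures no amplification of the constant beyond the first power.

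The key computational step is then to realize $(1-\Delta_h)^\alpha$ as convolution on $h\mathbb{Z}^d$ with a kernel and estimate that kernel's $l^1_h$-norm. Set $K^{(\alpha)}_h$ to be the sequence with $\widehat{K^{(\alpha)}_h}(\xi) = \bigl(1+|m_h(\xi)|\bigr)^\alpha$; then $(1-\Delta_h)^\alpha f = K^{(\alpha)}_h * f$ in the discrete sense, and Young's inequality gives $\norm{(1-\Delta_h)^\alpha f}_{l^p_h} \le \norm{K^{(\alpha)}_h}_{l^1_h}\norm{f}_{l^p_h}$ for every $1\le p\le\infty$. So the whole lemma reduces to showing $\norm{K^{(\alpha)}_h}_{l^1_h}\le (4dh^{-2})^\alpha$. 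This $l^1$-bound on the kernel is the main obstacle: a naive application of Plancherel only controls the $l^2_h$-norm, which is not enough. The clean way around it, which I would use, is to avoid kernels entirely and instead argue by a direct pointwise/combinatorial expansion: for $\alpha\in[0,1]$ the function $t\mapsto t^\alpha$ is operator-monotone and concave, and more to the point $(1-\Delta_h)^\alpha$ can be bounded by exploiting that $\frac{1}{4dh^{-2}}(1-\Delta_h)$ is an averaging (Markov) operator — i.e. $1-\Delta_h = 4dh^{-2}\bigl(\mathrm{Id} - A_h\bigr)$ where $A_h$ averages over nearest neighbours with nonnegative weights summing to $1$ plus a diagonal piece, so $\tfrac{1}{4dh^{-2}}(1-\Delta_h)$ is a convex combination of $l^p_h$-contractions, hence itself an $l^p_h$-contraction.

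Granting that $T_h := \tfrac{1}{4dh^{-2}}(1-\Delta_h)$ is a contraction on every $l^p_h$ (which follows from writing out $(1-\Delta_h)u_n$ explicitly: the coefficient of $u_n$ is $1 + 2dh^{-2}\le 4dh^{-2}$, the off-diagonal coefficients are $h^{-2}\ge 0$, and all of them sum to $1+2dh^{-2}\le 4dh^{-2}$ wait — I must be careful, they sum to $1+2dh^{-2}$ not $4dh^{-2}$, so $T_h$ is a contraction with operator norm $\le (1+2dh^{-2})/(4dh^{-2})\le 1$ using $1\le 2dh^{-2}$), the lemma follows: for $\alpha\in[0,1]$, $T_h^\alpha$ is defined by the spectral-type formula $T_h^\alpha = \frac{\sin(\pi\alpha)}{\pi}\int_0^\infty s^{\alpha-1}(s\,\mathrm{Id}+T_h^{-1})^{-1}T_h^{-1}\,ds$ hmm, or more simply via the binomial series $T_h^\alpha = (\mathrm{Id}-(\mathrm{Id}-T_h))^\alpha = \sum_{k\ge0}\binom{\alpha}{k}(-1)^k(\mathrm{Id}-T_h)^k$, whose coefficients $\bigl|\binom{\alpha}{k}\bigr|$ for $0\le\alpha\le1$ have $\sum_{k\ge0}\bigl|\binom{\alpha}{k}\bigr| = 2 - \sum(\ldots)$ — actually $\sum_{k\ge0}\bigl|\binom{\alpha}{k}\bigr|$ is finite and equals $\ldots$; since $\mathrm{Id}-T_h$ has $l^p_h\to l^p_h$ norm $\le 2$... this needs the sum $\sum_k |\binom{\alpha}{k}| 2^k$ which diverges, so I would instead use that $\mathrm{Id}-T_h$ is itself a positivity-preserving contraction-type operator with norm $\le 1$ (it is $A_h$ above, genuinely an average), making $\sum_k|\binom{\alpha}{k}| = 1$ by evaluating the binomial series at the point $1$, giving $\norm{T_h^\alpha}_{l^p_h\to l^p_h}\le 1$ and hence $\norm{(1-\Delta_h)^\alpha}_{l^p_h\to l^p_h}\le(4dh^{-2})^\alpha$, as claimed. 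I expect the delicate point to be this last manipulation — pinning down that the relevant binomial/positivity argument genuinely yields constant $1$ and not merely $O(1)$ — and I would present it by identifying $\mathrm{Id}-T_h$ explicitly as a Markov averaging operator so that $(\mathrm{Id}-T_h)^k$ are all contractions and $\sum_{k\ge0}|\binom{\alpha}{k}|(-1)^k = 0^\alpha$... no: the correct bookkeeping is $\sum_{k\ge 0}\binom{\alpha}{k}(-1)^k x^k=(1-x)^\alpha$, and for $\alpha\in(0,1)$ all coefficients with $k\ge1$ are negative while the $k=0$ term is $1$, so $\sum_{k\ge0}|\binom{\alpha}{k}(-1)^k| = 1 + \sum_{k\ge1}|\binom{\alpha}{k}| = 1 + (1 - (1-1)^\alpha) \cdot$ hmm equals $2 - \lim_{x\to1^-}(1-x)^\alpha = 2$. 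So the honest constant from this crude route is $2^\alpha\cdot(4dh^{-2})^\alpha$, which is worse by $2^\alpha$; to recover the sharp $(4dh^{-2})^\alpha$ I would instead invoke the subordination formula $\lambda^\alpha = c_\alpha\int_0^\infty (1-e^{-s\lambda})s^{-1-\alpha}\,ds$ applied to the nonnegative self-adjoint-on-$l^2$ operator $1-\Delta_h$, bounding $\norm{e^{-s(1-\Delta_h)}}_{l^p_h\to l^p_h}\le 1$ (heat-semigroup positivity and mass conservation) and $\norm{(1-e^{-s(1-\Delta_h)})}\le 1 - e^{-s\cdot 4dh^{-2}}$ from the spectral bound $\mathrm{spec}(1-\Delta_h)\subset[1,4dh^{-2}]$, which upon integration yields exactly $\norm{(1-\Delta_h)^\alpha}_{l^p_h\to l^p_h}\le (4dh^{-2})^\alpha$. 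That subordination argument is the step I would flag as the technical heart of the proof.
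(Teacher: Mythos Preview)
Your route is far more elaborate than the paper's, and the final subordination step contains a genuine gap. The paper's argument is essentially two lines: for $\alpha=1$ the triangle inequality applied directly to the definition of $\Delta_h$ gives
\[
\norm{(1-\Delta_h)f}_{l^p_h}\le \norm{f}_{l^p_h}+\norm{\Delta_h f}_{l^p_h}\le\big(1+4dh^{-2}\big)\norm{f}_{l^p_h}
\]
for every $p\in[1,\infty]$, since $\Delta_h f$ is a finite linear combination of translates of $f$; the case $\alpha\in(0,1)$ then follows by interpolation with the identity at $\alpha=0$. No Fourier side, no kernel $l^1_h$-estimate, no semigroup is invoked.

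The gap in your proposal is the inequality $\norm{I-e^{-s(1-\Delta_h)}}_{l^p_h\to l^p_h}\le 1-e^{-s\cdot 4dh^{-2}}$, which you justify via the spectral inclusion $\mathrm{spec}(1-\Delta_h)\subset[1,4dh^{-2}]$. Spectral bounds control only the $l^2_h$ operator norm; for $p\neq 2$ they give nothing. Indeed, writing $K(s)$ for the nonnegative kernel of $e^{-s(1-\Delta_h)}$, which has row-sum $e^{-s}$, the Schur norm of $I-e^{-s(1-\Delta_h)}$ equals $1+e^{-s}-2K_{nn}(s)$, and one checks this is in general strictly larger than $1-e^{-s(1+4dh^{-2})}$; integrating against $c_\alpha s^{-1-\alpha}\,ds$ then produces $2(1+4dh^{-2})^\alpha-1$ rather than $(4dh^{-2})^\alpha$. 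Your earlier binomial computation had already surfaced exactly this factor of~$2$, and switching to subordination does not remove it. If you are content with the bound up to a universal multiplicative constant---which is all that is used downstream in the paper---then either your Markov/binomial route or the paper's elementary $\alpha=1$ estimate plus interpolation is adequate; but the step you single out as ``the technical heart'' does not close as written.
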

\begin{proof}
It suffices to show 
\begin{align*}
\norm{\left(1-\De_h\right)f}_{l^p_h}\leq \frac{1}{h^2}\norm{f}_{l^p_h}.
\end{align*}
Then the desired estimate is followed by the interpolation. 

From the definition \eqref{defLAP},  
\begin{align*}
&\norm{\left(1-\De_h\right)f}_{l^p_h} 
                                   \leq \norm{f}_{l^p_h}+ \norm{ \De_h f}_{l^p_h}.
\end{align*}
Note that, by norm inequality and the definition of $\De_h$
\begin{align*}
\norm{ \De_h f}_{l^p_h}
\leq
\frac{4d}{h^2}\norm{f}_{l^p_h}
\end{align*}
This gives the claimed estimate and thus finishes the proof of the lemma.   
\end{proof}

Next, we consider the operator $(1-\Delta_h)^{-\alpha}$. 
Firstly we prove the discrete  Mihlin-H\"ormander Multiplier Theorem. We emphasis that  the condition given below is different from the classical Mihlin-H\"ormander Multiplier Theorem, see \cite{2011Invariant}. However, due to the complex structure of the pseudo-differential operators in discrete version, our result below can be used to lower down the singularity of the stepsize. 

To do this, we define the operator $T$, 
$$
\widehat{Tf}(\x)=m(\x)\hat{f}(\x).
$$ 
Then we have 
\begin{lemma}\label{lem2.3.1}
Let $d\geq 1$, $1\leq p\leq+\infty$ and $0<h\leq 1$, and let $m(\x)$ be a nonzero smooth function on $I_h=[0,2\pi h^{-1}]^d$. For any fixed $N\ge 1$,  suppose that there exist some constant $A_0>0$ such that
\begin{equation}  \label{condition-m}
\begin{aligned}
\|m\|_{L^\infty(I_h)}
&+(\ln N)^d \big\|\p_{\x_1}\p_{\x_2}\cdots \p_{\x_d} m\big\|_{L^1(I_h)}\\
&+(hN)^{-1} (\ln N)^{d-1}\sum\limits_{j=1}^d \big\|\p_{\x_j}\p_{\x_1}\p_{\x_2}\cdots \p_{\x_d} m \big\|_{L^1(I_h)}\leq A_0.
\end{aligned}
\end{equation}
Then for any $f\in l^p_h$, 
$$
\big\|Tf\big\|_{l^p_h}\leq CA
_0\norm{f}_{l^p_h},
$$
where the constant $C>0$ is not dependent of $A_0$ and $f$.
\end{lemma}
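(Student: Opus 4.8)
The plan is to prove this discrete Mihlin--H\"ormander theorem by a Calder\'on--Zygmund-type argument adapted to the torus $I_h$, exploiting that on a bounded domain the multiplier $m$ only needs to be controlled at finitely many ``dyadic'' scales between $1$ and the lattice cutoff $h^{-1}$ (equivalently $N$). The key object is the convolution kernel
\begin{align*}
K(n)=\frac{h^d}{(2\pi)^d}\int_{I_h} m(\x)\, e^{in\cdot\x}\, d\x, \qquad n\in h\Z^d,
\end{align*}
so that $Tf = K * f$ on $h\Z^d$. The $l^2_h\to l^2_h$ bound is immediate from Plancherel and $\|m\|_{L^\infty(I_h)}\le A_0$. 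To interpolate down to $l^1$ and up to $l^\infty$ (and then get all $1\le p\le\infty$ by duality and Riesz--Thorin, noting the estimate is dilation-stable in $h$), I would establish a discrete H\"ormander integral condition of the form
\begin{align*}
\sup_{m\in h\Z^d,\ m\neq 0}\ \sum_{\substack{n\in h\Z^d\\ |n|\ge 2|m|}} \big|K(n-m)-K(n)\big|\le C A_0,
\end{align*}
which, combined with a weak-$(1,1)$ Calder\'on--Zygmund decomposition on the space of homogeneous type $h\Z^d$, yields boundedness on $l^p_h$ for $1<p\le 2$ and the endpoints.

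The heart of the matter is the pointwise kernel estimate. First I would bound $|K(n)|$: integrating by parts once in each of the $d$ variables gives $|K(n)|\lesssim h^d\,|n_1\cdots n_d|^{-1}\,\|\p_{\x_1}\cdots\p_{\x_d}m\|_{L^1(I_h)}$ away from the coordinate hyperplanes, while near them one uses instead the trivial bound $|K(n)|\lesssim h^d\|m\|_{L^\infty}$; summing $h^d|n_1\cdots n_d|^{-1}$ over the lattice points with $1\le |n_j|\le C$ for the ``small'' coordinates produces exactly the logarithmic factors $(\ln N)^{d}$, since the number of relevant lattice scales in each direction is $O(\ln(h^{-1}))=O(\ln N)$ after truncating the tail $|n_j|\gtrsim h N$. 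For the difference $K(n-m)-K(n)$ one writes it as $\int_0^1 m\cdot\nabla K(n-\theta m)\,d\theta$ and estimates $\nabla K$ by carrying one extra derivative onto $m$; this is where the second sum in \eqref{condition-m}, with its $\sum_j\|\p_{\x_j}\p_{\x_1}\cdots\p_{\x_d}m\|_{L^1}$ and the weight $(hN)^{-1}(\ln N)^{d-1}$, is tailored to appear. The cancellation $|m|\lesssim |n|$ in the region $|n|\ge 2|m|$ is what makes the resulting sum over $n$ converge, again up to the $\ln N$ factors from the low coordinates.

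The step I expect to be the main obstacle is organizing the region decomposition of $h\Z^d$ so that the logarithmic powers come out exactly as stated and no spurious powers of $h^{-1}$ survive: one must carefully split each coordinate into ``very small'' ($|n_j|\lesssim 1$, handled by $L^\infty$), ``intermediate'' ($1\lesssim |n_j|\lesssim hN$, handled by one integration by parts, contributing a $\ln N$), and ``large'' ($|n_j|\gtrsim hN$, where an additional integration by parts against the higher-order term with its $(hN)^{-1}$ weight kills the tail), and then sum the mixed contributions over all $2^d$ combinations of coordinate types. Getting the bookkeeping right for the difference estimate simultaneously — matching the $(\ln N)^{d-1}$ rather than $(\ln N)^d$ — is the delicate point, and is presumably the reason the hypothesis \eqref{condition-m} is phrased with those specific mixed norms rather than the classical homogeneous Mihlin conditions. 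Once the kernel bounds are in hand, the passage to the $l^p_h$ operator bound is the standard Calder\'on--Zygmund/interpolation machinery and I would only sketch it.
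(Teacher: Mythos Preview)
Your kernel pointwise estimates via coordinate-wise integration by parts are on the right track, and in fact they are already the whole story --- what you are missing is that they directly show $K\in l^1_h(h\Z^d)$, after which Young's inequality $\|K*f\|_{l^p_h}\le \|K\|_{l^1_h}\|f\|_{l^p_h}$ gives the bound for every $1\le p\le\infty$, endpoints included, with no Calder\'on--Zygmund theory, no weak-$(1,1)$, and no interpolation. This is exactly the paper's argument: it isolates the $n=0$ Fourier coefficient (controlled by $\|m\|_{L^\infty}$), and for $n\neq 0$ integrates by parts once in every coordinate to obtain
\[
|K(n)|\le C\,\frac{h^d}{|n_1\cdots n_d|}\,\big\|\partial_{\xi_1}\cdots\partial_{\xi_d}m\big\|_{L^1(I_h)},
\]
summing this over $0<|n|\le hN$ to produce the $(\ln N)^d$ factor; for $|n|>hN$ it integrates by parts \emph{once more in the single largest coordinate} $n_{j_0}$ to gain an extra factor $|n_{j_0}|^{-1}\le (hN)^{-1}$, and the remaining $d-1$ coordinates contribute the $(\ln N)^{d-1}$. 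There is no need to split each coordinate separately into three zones or to track $2^d$ mixed cases.

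Your Calder\'on--Zygmund route is therefore substantially more elaborate than necessary, and as written it has a genuine gap at the endpoints: the H\"ormander integral condition and the CZ decomposition produce only a weak-$(1,1)$ bound, and Riesz--Thorin/duality then gives $1<p<\infty$, not the closed range $1\le p\le\infty$ the lemma asserts. The difference estimate $K(n-m)-K(n)$ and the ``delicate bookkeeping'' you anticipate are simply never needed; the hypothesis \eqref{condition-m} is tailored to make the $l^1$ kernel bound close directly, not to verify a H\"ormander cancellation condition.
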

\begin{proof}
Let $f=\{f_n\}_{n\in h\Z^d}$. By definitions \eqref{def:dFT} and \eqref{def:dFT-1}, we have that 
\begin{align*}
(Tf)_n
= &
\frac{h^d}{(2\pi)^d} \int_{[0,2\pi h^{-1}]^d}e^{in\cdot\x} m(\xi)\hat f(\xi)\,d\x\\
=&
 \sum_{n'\in h\Z^d} f_{n'}\frac{h^d}{(2\pi)^d}\int_{[0,2\pi h^{-1}]^d}  e^{-i(n^{\prime}-n)\cdot\x}  m(\xi)  \,d\x.
\end{align*}
Then we have that 
\begin{subequations}\label{eqlem2.3.1.1}
\begin{align}
\big\|Tf\big\|_{l^p_h}
&= 
\left(\sum_{n\in h\Z^d} \bigg|\sum_{n^{\prime}\in h\Z^d}f_{n^{\prime}}\frac{h^d}{(2\pi)^d}\int_{[0,2\pi h^{-1}]^d}e^{-i(n^{\prime}-n)\cdot\x}m(\x)\,d\x\bigg|^p\right)^{1/p}\notag\\
&= 
\left(\sum_{n\in h\Z^d}\bigg|\sum_{n^{\prime}\in h\Z^d}f_{{n^{\prime}+n}}\frac{h^d}{(2\pi)^d}\int_{[0,2\pi h^{-1}]^d}e^{-in^{\prime}\cdot\x}m(\x)\,d\x\bigg|^p\right)^{1/p}\notag\\
&= 
\normb{f\frac{h^d}{(2\pi)^d}\int_{[0,2\pi h^{-1}]^d} m(\x)\,d\x}_{l^p_h}\label{eqlem2.3.1.1-1}\\
&\quad +
\left(\sum_{n\in h\Z^d}\bigg|\sum_{n^{\prime}\in h\Z^d:n'\ne 0}f_{{n^{\prime}+n}}\frac{h^d}{(2\pi)^d}\int_{[0,2\pi h^{-1}]^d}e^{-in^{\prime}\cdot\x}m(\x)\,d\x\bigg|^p\right)^{1/p}.\label{eqlem2.3.1.1-2}
\end{align}
\end{subequations}
For the term \eqref{eqlem2.3.1.1-1}, there exist some absolute constant $C>0$ such that 
\begin{align*}
\eqref{eqlem2.3.1.1-1} 
&\leq 
\frac{h^d}{(2\pi)^d}\int_{[0,2\pi h^{-1}]^d}|m(\x)|\,d\x\> \big\|f\big\|_{l^p_h}\\
&\le 
C\|m\|_{L^\infty(I_h)} \big\|f\big\|_{l^p_h} \\
&\le 
CA_0 \big\|f\big\|_{l^p_h}.
\end{align*}
For the term \eqref{eqlem2.3.1.1-2}, we have that 
\begin{align}\label{est: eqlem2.3.1.1-2}
\eqref{eqlem2.3.1.1-2} 
&\leq 
\norm{f}_{l^p_h}\normb{\frac{h^d}{(2\pi)^d}\int_{[0,2\pi h^{-1}]^d}e^{-in\cdot\x}m(\x)\,d\x}_{l^1_h}.                                                                    
\end{align}
Denote $b=\{b_n\}_{n\in h\Z^d}$, where
$$
b_n\triangleq \frac{h^d}{(2\pi)^d}\int_{[0,2\pi h^{-1}]^d}e^{-in\cdot\x}m(\x)d\x.$$
and 
$\xi=(\xi_1,\xi_2,\cdots,\xi_d), n=(n_1,n_2,\cdots,n_d)$. 
Applying the formula
$$
e^{-in_j  \x_j}=\frac{1}{-in_j}\partial_{\xi_j}\Big( e^{-in_j \x_j}\Big),
$$
and integration-by-parts we get
\begin{align*}
b_n &= \left(\frac{h}{2\pi}\right)^{d}\int_{[0,2\pi h^{-1}]^d} e^{-i\sum_{j=1}^d n_j \x_j} m(\x) d\x_1d\x_2\cdots d\x_d \\
    &= \left(\frac{h}{2\pi}\right)^{d} \frac{i}{n_1}\int_{[0,2\pi h^{-1}]^{d-1}}e^{-in\cdot\x}m(\x) \>  d\x_2\cdots d\x_d \bigg|_{\x_1 = 0}^{\x_1=2\pi h^{-1}}\\
    &\quad 
    -\left(\frac{h}{2\pi}\right)^{d} \frac{i}{n_1}\int_{[0,2\pi h^{-1}]^d}e^{-in\cdot\x}\p_{\x_1}m(\x)\> d\x_1d\x_2\cdots d\x_d .
\end{align*}
Note that the first term vanishes by the periodicity, we get that 
\begin{align*}
b_n=-\left(\frac{h}{2\pi}\right)^{d} \frac{i}{n_1}\int_{[0,2\pi h^{-1}]^d}e^{-in\cdot\x}\p_{\x_1}m(\x)\> d\x_1d\x_2\cdots d\x_d.
\end{align*}
Then using the same process for $j=2,\cdots, d$,  we further obtain that 
\begin{align}\label{bn-form1}
b_n=\frac{h^{d}}{n_1n_2\cdots n_d}\left(\frac{1}{2i\pi}\right)^d\int_{[0,2\pi h^{-1}]^d}e^{-in\cdot\x}\p_{\x_1}\p_{\x_2}\cdots \p_{\x_d} m(\x) d\x_1d\x_2\cdots d\x_d.
\end{align}
Without loss generality, we may assume that $|n_1|\ge |n_2|\ge\cdots \geq|n_d|$.  
Then,  using the same process once again in $j=1$,  we finally get that 
\begin{align}\label{bn-form2}
b_n=\frac{h^{d}}{n_1^2 n_2\cdots n_d}\left(\frac{1}{2i\pi}\right)^{d+1}\int_{[0,2\pi h^{-1}]^d}\p_{\x_1}^2\p_{\x_2}\cdots \p_{\x_d} m(\x) e^{-in\cdot\x}d\x_1d\x_2\cdots d\x_d.
\end{align}
Fix a number $N$ which will be determined later. Then by \eqref{bn-form1},  we have that 
\begin{align}\label{bn-low}
\norm{b}_{l^1_h\{|n|\le hN\}}\leq &
C\Big\|\frac{h^d}{n_1 n_2\cdots n_d}\Big\|_{l^1_h\{|n|\le hN\}}
  \int_{[0,2\pi h^{-1}]^d}\big|\p_{\x_1}\p_{\x_2}\cdots \p_{\x_d} m(\x)\big| d\x_1d\x_2\cdots d\x_d\notag\\
\le &
C(\ln N)^d
  \big\|\p_{\x_1}\p_{\x_2}\cdots \p_{\x_d} m\big\|_{L^1(I_h)}.
\end{align}

If $|n|>hN$, then there exists an integer $j_0\in [1,d]$ such that 
$$
|n_1|\ge\cdots \ge |n_{j_0}|\ge hN\ge |n_{j_0+1}|\ge\cdots\ge  |n_d|.
$$
In this case, we use \eqref{bn-form2} and obtain that 
\begin{align*}
\norm{b}_{l^1_h\{|n|> hN\}}\leq &
Ch^{-1}\Big\|\frac{h^{d+1}}{n_1^2 n_2\cdots n_d}\Big\|_{l^1_h\{|n_1|\ge\cdots\ge |n_{j_0}|\ge hN\ge |n_{j_0+1}|\ge\cdots\ge  |n_d|\ne 0\}}\\
&\qquad \cdot \int_{[0,2\pi h^{-1}]^d}\big|\p_{\x_1}^2\p_{\x_2}\cdots \p_{\x_d} m(\x)\big| d\x_1d\x_2\cdots d\x_d.
\end{align*}
Since 
$$
\Big\|\frac{h^{d+1}}{n_1^2 n_2\cdots n_d}\Big\|_{l^1_h\{|n_1|\ge\cdots\ge |n_{j_0}|\ge hN\ge |n_{j_0+1}|\ge\cdots\ge  |n_d|\ne 0\}}
\le CN^{-1} (\ln N)^{d-j_0},
$$
we further get 
\begin{align*}
\norm{b}_{l^1_h\{|n|> hN\}}
\le &
C\sup\limits_{j_0\in [1,d]}(hN)^{-1} (\ln N)^{d-j_0} \> \int_{[0,2\pi h^{-1}]^d}\big|\p_{\x_1}^2\p_{\x_2}\cdots \p_{\x_d} m(\x)\big| d\x_1d\x_2\cdots d\x_d\\
\le &
C(hN)^{-1} (\ln N)^{d-1}\big\|\p_{\x_1}^2\p_{\x_2}\cdots \p_{\x_d} m\big\|_{L^1(I_h)}.
\end{align*}
Together this estimate with \eqref{bn-low}, we have that 
\begin{align*}
\norm{b}_{l^1_h}
\le &
C(\ln N)^d
  \big\|\p_{\x_1}\p_{\x_2}\cdots \p_{\x_d} m\big\|_{L^1(I_h)}
  +C(hN)^{-1} (\ln N)^{d-1}\big\|\p_{\x_1}^2\p_{\x_2}\cdots \p_{\x_d} m\big\|_{L^1(I_h)}\\
  \le &
  A_0.
\end{align*}
Therefore, inserting this estimate into \eqref{est: eqlem2.3.1.1-2}, we obtain that 
\begin{align*}
\eqref{eqlem2.3.1.1-2} 
&\leq 
CA_0\norm{f}_{l^p_h}.                                                                    
\end{align*}

Combining with the two estimates on \eqref{eqlem2.3.1.1}, we have
\begin{align*}
\big\|Tf\big\|_{l^p_h}
&\leq 
CA_0\norm{f}_{l^p_h},
\end{align*}
and thus  finish the proof of the lemma.
\end{proof}
\begin{remark}
It is worth noting that for the case of $p=2$, we only need $m$ to be boundedness in $L^\infty(\R^d)$.
\end{remark}

An application of the lemma above is the $l^p$-estimate for the operator $\left(1-\De_h\right)^{- \alpha}$. We expect that 
it is  bounded uniformly in stepsize from $l^p_h$ to $l^p_h$, however, what we can obtain in the following still has some log loss in $h$. 
\begin{cor}\label{cor1}
Let $\alpha\in [0,1]$ and $p\in [1,+\infty]$, then for any $f=\{f_n\}\in l^p_h$, 
\begin{align*}
 \norm{\left(1-\De_h\right)^{- \alpha}f}_{l^p_h}\leq C\big(1+|\ln h|\big)^{d\alpha}\norm{f}_{l^p_h},
\end{align*}
where the constant $C$ only dependent of $d$.
\end{cor}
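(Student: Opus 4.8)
The plan is to apply Lemma~\ref{lem2.3.1} with the multiplier $m(\x)$ equal to the symbol of $(1-\De_h)^{-\alpha}$ and then choose the parameter $N$ suitably to produce the logarithmic loss. First I would compute the symbol: since $\widehat{-\Delta_h f}(\x) = \sigma_h(\x)\hat f(\x)$ with $\sigma_h(\x) = \frac{4}{h^2}\sum_{j=1}^d \sin^2(h\x_j/2)$, the operator $(1-\De_h)^{-\alpha}$ corresponds to $m(\x) = \big(1+\sigma_h(\x)\big)^{-\alpha}$. Note $1 \le 1+\sigma_h(\x) \le 1 + 4dh^{-2}$, so $\|m\|_{L^\infty(I_h)}\le 1$ immediately. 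The main work is to estimate the two $L^1$-norms of derivatives appearing in condition~\eqref{condition-m}. Differentiating $m$, each $\p_{\x_j}$ brings down a factor whose size is controlled by $|\p_{\x_j}\sigma_h(\x)| = \frac{2}{h}|\sin(h\x_j)| \lesssim h^{-1}\min(h|\x_j|,1)$-type behavior near the origin, combined with the weight $(1+\sigma_h)^{-\alpha-1}$; and a second derivative in the same variable contributes a comparable bound. The upshot is that $\p_{\x_1}\cdots\p_{\x_d} m$ and $\p_{\x_j}\p_{\x_1}\cdots\p_{\x_d}m$ are integrable on $I_h$ with bounds that, after the change of variables $\x \mapsto \x/h$ (which maps $I_h$ to $[0,2\pi]^d$), reduce to fixed integrals independent of $h$ up to the singular behavior of $(1+\sigma)^{-\alpha}$ near $\x = 0$.

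The key estimate to carry out carefully is the behavior near the origin, where $\sigma_h(\x) \approx |\x|^2$ (in the rescaled variable) and $m \approx (1+|\x|^2)^{-\alpha}$, which is bounded but whose mixed derivatives $\p_{\x_1}\cdots\p_{\x_d}$ produce a singularity like $|\x|^{-d}$ near $0$ (borderline non-integrable) — this is precisely where the log loss enters. Concretely, I expect $\big\|\p_{\x_1}\cdots\p_{\x_d} m\big\|_{L^1(I_h)} \lesssim 1 + |\ln h|$ after rescaling (the domain $[0,2\pi]^d$ is fixed but one tracks the cutoff scale from the lattice), and similarly $\sum_j\big\|\p_{\x_j}\p_{\x_1}\cdots\p_{\x_d}m\big\|_{L^1(I_h)} \lesssim h^{-1}(1+|\ln h|)$ because the extra derivative costs a factor comparable to $h^{-1}$ at the worst scale. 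Feeding these into \eqref{condition-m} with the free parameter $N$, the requirement becomes
\begin{align*}
1 + (\ln N)^d (1+|\ln h|) + (hN)^{-1}(\ln N)^{d-1}\cdot h^{-1}(1+|\ln h|) \lesssim A_0.
\end{align*}
Wait — I should track the $h$-powers more carefully; the point is that choosing $N$ a suitable power of $h^{-1}$ (e.g. $N \sim h^{-2}$, so that $hN \sim h^{-1}$ absorbs the extra $h^{-1}$ from the second-derivative term) makes $\ln N \sim |\ln h|$ and balances the two contributions, yielding the admissible constant $A_0 \lesssim (1+|\ln h|)^d$. Then Lemma~\ref{lem2.3.1} gives $\|(1-\De_h)^{-\alpha}f\|_{l^p_h}\lesssim (1+|\ln h|)^d\|f\|_{l^p_h}$ for $\alpha = 1$, and for general $\alpha\in[0,1]$ one interpolates against the trivial bound at $\alpha = 0$ (or, more simply, notes that $(1+\sigma_h)^{-\alpha} = \big((1+\sigma_h)^{-1}\big)^\alpha$ so the derivative count scales and the loss is $(1+|\ln h|)^{d\alpha}$ directly).

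The main obstacle, and the step demanding the most care, is the bookkeeping of how the mixed derivatives of $m(\x) = (1+\sigma_h(\x))^{-\alpha}$ distribute over the various scales of $|\x|$: one must split $I_h$ dyadically in $|\x|$, bound $|\p_{\x_1}\cdots\p_{\x_d}m|$ and $|\p_{\x_j}\p_{\x_1}\cdots\p_{\x_d}m|$ on each annulus using the product/chain rule (keeping track that each $\sin(h\x_j)$ factor can be small, which helps, while each $(1+\sigma_h)^{-\alpha-k}$ weight is harmless), sum the geometric-type series, and identify the single logarithmically-divergent scale. A clean way to organize this is to first prove the rescaled statement on $[0,2\pi]^d$ with symbol $\big(1 + \frac{4}{h^2}\sum_j\sin^2(\x_j/2)\big)^{-\alpha}$ — but I would rather keep $I_h$ and exploit that on $I_h$ the symbol $1+\sigma_h$ is comparable to $1 + |\x|^2$ only for $|\x|\lesssim h^{-1}$ while staying bounded below by $1$ throughout, so that the region $|\x|\gtrsim 1$ contributes only convergent (in fact $h$-uniform) integrals and all the $|\ln h|$ comes from $h \lesssim |\x| \lesssim 1$. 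Handling the endpoint $\alpha = 1$ first and then interpolating is the cleanest route to the stated exponent $d\alpha$.
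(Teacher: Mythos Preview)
Your overall strategy --- reduce to $\alpha=1$ by interpolation, apply Lemma~\ref{lem2.3.1} to the symbol $m=M^{-1}$, and choose $N$ to be a power of $h^{-1}$ --- is exactly the paper's. But you have misidentified the source of the logarithmic loss, and this leads to an exponent worse than the one stated.

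The mixed derivative $\p_{\x_1}\cdots\p_{\x_d}m$ does \emph{not} behave like $|\x|^{-d}$ near the origin. Already for the model $(1+|\x|^2)^{-1}$ one has $\p_{\x_1}\cdots\p_{\x_d}(1+|\x|^2)^{-1}=c_d\,\x_1\cdots\x_d\,(1+|\x|^2)^{-d-1}$, which \emph{vanishes} to order $d$ at $0$. For the true symbol the paper obtains the closed formula
\[
\p_{\x_1}\cdots\p_{\x_d}m(\x)=(-2)^d d!\,\prod_{j=1}^d\frac{\sin(h\x_j)}{h}\;m(\x)^{d+1},
\]
and a one-variable computation shows $\int_0^{2\pi/h}\frac{|\sin(h\x_k)|}{h}\,m(\x)^{1+a}\,d\x_k\le C$ for any $a>0$, uniformly in $h$. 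Hence $\|\p_{\x_1}\cdots\p_{\x_d}m\|_{L^1(I_h)}\le C$ and $\sum_j\|\p_{\x_j}\p_{\x_1}\cdots\p_{\x_d}m\|_{L^1(I_h)}\le C$, both \emph{without} any $|\ln h|$. The entire logarithm in the corollary comes from the structural prefactors $(\ln N)^d$ and $(hN)^{-1}(\ln N)^{d-1}$ in condition~\eqref{condition-m}: one is forced to take $N\sim h^{-2}$ so that $(hN)^{-1}=h$ controls the second term, and then $\ln N\sim|\ln h|$ produces $A_0\le C(1+|\ln h|)^d$.

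Your own arithmetic does not give what you assert: with your estimates $\|\p_{\x_1}\cdots\p_{\x_d}m\|_{L^1}\lesssim 1+|\ln h|$ and $\sum_j\|\p_{\x_j}\p_{\x_1}\cdots\p_{\x_d}m\|_{L^1}\lesssim h^{-1}(1+|\ln h|)$, the middle term of \eqref{condition-m} at $N=h^{-2}$ is $(\ln N)^d(1+|\ln h|)\sim(1+|\ln h|)^{d+1}$, not $(1+|\ln h|)^d$. So the proposal as written would only yield $(1+|\ln h|)^{(d+1)\alpha}$ after interpolation. The argument closes precisely once you see that the $L^1$-norms of the derivatives are uniform in $h$ and that the log is generated solely by the choice of $N$ in Lemma~\ref{lem2.3.1}.
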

\begin{proof}
It suffices to show 
\begin{align}\label{eqcor1.2}
\norm{\left(1-\De_h\right)^{- 1}f}_{l^p_h}\leq \frac{1}{h^2}\norm{f}_{l^p_h}.
\end{align}
Then the general case is followed by the interpolation. 

 We first drive the multiplier of $(1-\De_h)$. We denote $\xi=(\xi_1,\xi_2,\cdots,\xi_d)$, then 
\begin{align*}
\mathcal F\big[\De_h f\big](\xi) 
&=\frac{1}{h^2}\sum_{n\in h\Z^d}e^{-in\cdot\x}\sum_{j=1}^d \left(f_{n+he_j}+f_{n-he_j}-2f_n\right).
\end{align*}
Here we denote $\mathcal F f=\hat f$. 
Changing the variable, it is further equal to
$$
\frac{1}{h^2}\sum_{n\in h\Z^d}e^{-in\cdot\x}f_n\sum_{j=1}^d \left(e^{ih\x_j}+e^{-ih\x_j}-2\right).
$$
Since 
$$
e^{ih\x_j}+e^{-ih\x_j}-2=-4\sin^2\left(\frac{h\x_j}{2}\right),
$$
we get that 
\begin{align*}
\mathcal F\big[(1-\De_h) f\big](\xi) 
 &=\left[\frac{1}{h^2}\sum_{j=1}^d 4\sin^2\left(\frac{h\x_j}{2}\right)+1\right]\sum_{n\in h\Z^d}e^{-in\cdot\x}f_n\\
 &=\left[\frac{1}{h^2}\sum_{j=1}^d 4\sin^2\left(\frac{h\x_j}{2}\right)+1\right]\hat{f}(\xi).
\end{align*}
Here $f= \{f_n\}$. 
Denote  
$$
M(\x)\triangleq \frac{1}{h^2}\sum_{j=1}^d 4\sin^2\left(\frac{h\x_j}{2}\right)+1=\frac{1}{h^2}\sum_{j=1}^d 2\big(1-\cos h\x_j\big)+1.
$$
Then we obtain that 
\begin{align}\label{def:mult-Delta}
\mathcal F\big[(1-\De_h) f\big](\xi) 
 &=M(\xi)\widehat{f}(\xi).
\end{align}
This implies that $M$ is the multiplier of the operator $(1-\De_h)$. This leads the definition of $(1-\De_h)^{-1}f_n$, which reads as 
\begin{align*}
\mathcal F\big[(1-\De_h)^{-1} f\big](\xi) 
&=m(\xi) \hat f(\xi),\quad \mbox{where } \quad m(\xi)=M(\xi)^{-1}.
\end{align*}

Now we only need to check the condition \eqref{condition-m} in Lemma \ref{lem2.3.1}. 
It is obvious that
$$ 
\|m\|_{L^\infty(I_h)}\le 1.
 $$
By a direct calculation, we have that 
\begin{align}\label{p1-m}
\p_{\x_1}\p_{\x_2}\cdots \p_{\x_d} m(\x)=(-2)^d d! \prod_{j=1}^d \frac{\sin (h\xi_j)} h\> m(\x)^{d+1}.
\end{align}
Note that for any $a>0$, 
\begin{equation}\label{sin-h}
\begin{aligned}
\int_0^{2\pi h^{-1}}\frac{|\sin (h\xi_k)|} h\> m(\x)^{1+a}\,d\x_k
= &
\frac1{h^2}\int_0^{2\pi}\frac{|\sin (\xi_k)|} {\big[h^{-2}\sum_{j=1}^d 2\big(1-\cos \x_j\big)+1\big]^{1+a}}\,d\x_k\\
\le &
\frac1{h^2}\int_0^{2\pi}\frac{|\sin (\xi)|} {\big[h^{-2}\big(1-\cos \x\big)+1\big]^{1+a}}\,d\x\\
\le &
\frac1{h^2}\int_0^1\frac x {\big[h^{-2}x^2+1\big]^{1+a}}\,dx\\
\le &
C \int_0^{+\infty}\frac{x} {(x^2+1)^{1+a}}\,dx\\
\le &
C. 
\end{aligned}
\end{equation}
Applying this estimate and \eqref{p1-m} and choosing $a=\frac1d$, we get that 
\begin{align}\label{est:m-1}
\big\|\p_{\x_1}\p_{\x_2}\cdots \p_{\x_d} m(\x)\big\|_{L^1(I_h)}
\le C .
\end{align}

By \eqref{p1-m} again, we further have that 
\begin{align}\label{p2-m}
\p_{\x_1}^2\p_{\x_2}\cdots \p_{\x_d} m(\x)=(-2)^d d! \left[\cos(h\xi_1)-(d+1) \left(\frac{\sin (h\xi_1)} h\right)^2m(\xi)\right]\prod_{j=2}^d \frac{\sin (h\xi_j)} h\> m(\x)^{d+1}.
\end{align}
Note that 
\begin{subequations}\label{eq10.11.1}
\begin{align}
&\int_0^{2\pi h^{-1}}\left|\cos(h\xi_1)-(d+1) \left(\frac{\sin (h\xi_1)} h\right)^2m(\xi)\right|m(\xi)\,d\x_1\notag\\
&\le 
\int_0^{2\pi h^{-1}}\left|\cos(h\xi_1)\bigg[h^{-2}\sum_{j=1}^d 2\big(1-\cos h\x_j\big)+1\bigg]^{-1}\right|\,d\x_1\label{eq10.11.1.1}\\
&\quad+
C\int_0^{2\pi h^{-1}}\left|\left(\frac{\sin (h\xi_1)} h\right)^2\bigg[h^{-2}\sum_{j=1}^d 2\big(1-\cos h\x_j\big)+1\bigg]^{-2}\right|\,d\x_1. \label{eq10.11.1.2}
\end{align}
\end{subequations}
For \eqref{eq10.11.1.1}, we have
\begin{align*}
\eqref{eq10.11.1.1}
&\leq 
\int_0^{2\pi h^{-1}}\left|\frac{h^{2}}{2\big(1-\cos h\x_1\big)+h^{2}}\right|\,d\x_1 \\
&\xlongequal{\y=h\x_1}
\frac{1}{h} \int_0^{2\pi}\left|\frac{h^{2}}{2\big(1-\cos\y\big)+h^{2}}\right|\,d\y\\
&\leq 
\frac{2}{h}\int_0^{\pi}\frac{h^{2}}{2\big(1-\cos\y\big)+h^{2}}\,d\y.
\end{align*}
Note that for any $0\leq\y\leq \pi$,
\begin{align}\label{eq10.11.3}
\frac{\y^2}{2}\geq 1-\cos\y=2\sin^2 \frac{\y}{2}\geq  \frac{2\y^2}{\pi ^2}.
\end{align}
Using \eqref{eq10.11.3} we have
\begin{align*}
\eqref{eq10.11.1.1}
&\leq 
\frac{2}{h}\int_0^{\pi} \frac{1}{2h^{-2}\y^2+1}\,d\y
\xlongequal{x=h^{-1}\y}
2\int_0^{h^{-1}\pi } \frac{1}{2x^2+1}\,dx 
\leq 
C.
\end{align*}
For \eqref{eq10.11.1.2}
\begin{align*}
\eqref{eq10.11.1.2}
&\leq
\int_0^{2\pi h^{-1}}\left|\left(\frac{\sin (h\xi_1)} h\right)^2\big[2h^{-2} \big(1-\cos h\x_1\big)+1\big]^{-2}\right|\,d\x_1 \\
&\xlongequal{\y=h\x_1}
\frac{2}{h^3}\int_0^{\pi }\frac{\sin ^2\y}{\big[2h^{-2}\big(1-\cos\y \big)+1\big]^{2}}\,d\y
\end{align*}
Applying \eqref{eq10.11.3} again, we have
\begin{align*}
\eqref{eq10.11.1.2}
\leq
\frac{2}{h^3}\int_0^{\pi }\frac{\y^2}{\big(2h^{-2}\y^2+1\big)^{2}}\,d\y
\xlongequal{x=h^{-1}\y}
2\int_0^{h^{-1}\pi  }\frac{x^2}{\big(2x^2+1\big)^{2}}\,d\y
\leq 
C.
\end{align*}
Combining \eqref{eq10.11.1.1},\eqref{eq10.11.1.2} with \eqref{sin-h} gives that 
\begin{align*}
\big\|\p_{\x_1}^2\p_{\x_2}\cdots \p_{\x_d} m(\x)\big\|_{L^1(I_h)}
\le C .
\end{align*}
Similarly, we obtain that 
\begin{align}\label{est:m-2}
\sum\limits_{j=1}^d \big\|\p_{\x_j}\p_{\x_1}\p_{\x_2}\cdots \p_{\x_d} m \big\|_{L^1(I_h)}
\le C .
\end{align}

Now we choose $N=h^{-2}$ in \eqref{condition-m}, and use \eqref{est:m-1} and \eqref{est:m-2}, to obtain that   for any $h\in (0,1]$, 
\begin{align*}
\|m\|_{L^\infty(I_h)}
&+|\ln h|^d \big\|\p_{\x_1}\p_{\x_2}\cdots \p_{\x_d} m\big\|_{L^1(I_h)}\\
&+h |\ln h|^{d-1}\sum\limits_{j=1}^d \big\|\p_{\x_j}\p_{\x_1}\p_{\x_2}\cdots \p_{\x_d} m \big\|_{L^1(I_h)}
\leq C\big(1+|\ln h|\big)^{d}.
\end{align*} 
Therefore, \eqref{condition-m} is valid for $A_0=C\big(1+|\ln h|\big)^{d}$.
Applying Lemma  \ref{lem2.3.1}, we establish the desired estimate and  finish the proof.
\end{proof}

\subsection{Linear operator estimation}

\vskip 0.2cm
In this section, we establish several estimations for linear operators that are essential for our analysis.
\begin{lem}\label{lem2.1}
For any function $f\in l^p_h$, where $1\leq p \leq +\infty$, the following inequality holds:
\begin{align*}
\norm{e^{-it\Delta_h}f}_{l^p_h} \leq e^{2d|1-\frac 2p| h^{-2} t}\norm{f}_{l^p_h}.
\end{align*}
\end{lem}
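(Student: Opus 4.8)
The estimate concerns the free discrete Schrödinger flow $e^{-it\Delta_h}$ on $l^p_h(h\Z^d)$, and the natural route is interpolation between the endpoint cases $p=1$ (equivalently $p=\infty$ by duality) and $p=2$. On $l^2_h$ the operator $e^{-it\Delta_h}$ is unitary, since $\Delta_h$ is a bounded self-adjoint operator (its Fourier multiplier is real-valued), so $\norm{e^{-it\Delta_h}f}_{l^2_h}=\norm{f}_{l^2_h}$, which is exactly the claimed bound at $p=2$ (the exponent $2d|1-\tfrac2p|h^{-2}|t|$ vanishes there). For the $l^1_h\to l^1_h$ bound one should write the flow as convolution on the lattice: $(e^{-it\Delta_h}f)_n = \sum_{m\in h\Z^d} K_t(n-m) f_m$ where $K_t$ is the kernel whose discrete Fourier transform is $\exp\big(it\,h^{-2}\sum_{j=1}^d 4\sin^2(h\xi_j/2)\big)$. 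By Young's inequality $\norm{e^{-it\Delta_h}f}_{l^1_h}\le \norm{K_t}_{l^1_h}\norm{f}_{l^1_h}$, so it suffices to bound $\norm{K_t}_{l^1_h}$ by $e^{2d h^{-2}|t|}$.

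The key computation is that the kernel factorizes over coordinates and is expressible via Bessel functions. Writing $\Delta_h = \sum_{j=1}^d \Delta_h^{(j)}$ with commuting one-dimensional pieces, $K_t = \bigotimes_{j=1}^d K_t^{(1d)}$, so $\norm{K_t}_{l^1_h} = \big(\norm{K_t^{(1d)}}_{l^1_h(h\Z)}\big)^d$ and it is enough to treat $d=1$. In one dimension, $e^{-it\Delta_h} = e^{2it/h^2} e^{-(it/h^2)(S+S^{-1})}$ where $S$ is the unit shift, and expanding the exponential of $S+S^{-1}$ gives the matrix coefficients in terms of $J_k(2t/h^2)$ (Bessel functions of the first kind); the crucial fact is the generating-function identity $\sum_{k\in\Z} J_k(z) = 1$ together with $\sum_{k\in\Z}|J_k(z)| \le$ something — but more robustly, one avoids Bessel asymptotics entirely by the brute-force bound
\[
\norm{K_t^{(1d)}}_{l^1(h\Z)} = \Big\| e^{2it/h^2}\sum_{k\ge 0}\frac{(-it/h^2)^k}{k!}(S+S^{-1})^k \Big\|_{l^1\to l^1}
\le \sum_{k\ge 0}\frac{(|t|/h^2)^k}{k!}\,\norm{(S+S^{-1})^k}_{l^1\to l^1} \le e^{2|t|/h^2},
\]
using $\norm{S+S^{-1}}_{l^1\to l^1}\le 2$. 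Raising to the $d$-th power yields $\norm{e^{-it\Delta_h}}_{l^1_h\to l^1_h}\le e^{2d|t|/h^2}$, and by duality the same bound holds $l^\infty_h\to l^\infty_h$.

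**Interpolation and the obstacle.** With $\norm{e^{-it\Delta_h}}_{l^1_h\to l^1_h}\le e^{2d|t|/h^2}$, $\norm{e^{-it\Delta_h}}_{l^\infty_h\to l^\infty_h}\le e^{2d|t|/h^2}$, and $\norm{e^{-it\Delta_h}}_{l^2_h\to l^2_h}=1$, the Riesz–Thoren interpolation theorem applied between $p=2$ and whichever endpoint is on the relevant side ($p=1$ if $1\le p\le 2$, $p=\infty$ if $2\le p\le\infty$) gives $\norm{e^{-it\Delta_h}}_{l^p_h\to l^p_h}\le \big(e^{2d|t|/h^2}\big)^{\theta}\cdot 1^{1-\theta}$ where $\theta$ is the interpolation parameter: for $1\le p\le 2$, $\tfrac1p = \tfrac\theta1 + \tfrac{1-\theta}2$ gives $\theta = \tfrac2p - 1 = |1-\tfrac2p|$, and for $2\le p\le\infty$, $\tfrac1p = \tfrac\theta\infty + \tfrac{1-\theta}2$ gives $\theta = 1 - \tfrac2p = |1-\tfrac2p|$. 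In both cases the exponent is $2d|1-\tfrac2p|h^{-2}|t|$, which is precisely the claimed bound. I expect no real obstacle here; the only points requiring mild care are making the series manipulation for $\norm{K_t^{(1d)}}_{l^1}$ rigorous (absolute convergence in operator norm, which is immediate) and checking that Riesz–Thoren applies on the discrete measure space $h\Z^d$ with counting measure scaled by $h^d$ — which it does, since it is a standard $\sigma$-finite measure space. One could alternatively prove the $p\in[1,\infty]$ bound in a single stroke using the explicit Bessel kernel and the identity $\norm{K_t^{(1d)}}_{l^1} = \sum_k |J_k(2t/h^2)|$ combined with the Kapteyn-type bound $\sum_k|J_k(z)|\le e^{|z|}$ — but the power-series argument above is cleaner and self-contained.
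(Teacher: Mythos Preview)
Your argument is correct. Both proofs share the same architecture---establish the endpoint bound $e^{2d|t|/h^2}$ on $l^1_h$ (and $l^\infty_h$), combine it with $l^2_h$-unitarity, and interpolate---but the route to the endpoint bound differs. The paper multiplies the linear equation by $|u_n|^{p-2}\overline{u_n}$, takes imaginary parts, and uses Young's inequality to obtain the differential inequality $\partial_t\|u\|_{l^p_h}^p \le 2dp\,h^{-2}\|u\|_{l^p_h}^p$, which gives the endpoint bound \emph{uniformly in $p$} (the $l^\infty_h$ case follows by letting $p\to\infty$). Your approach instead writes $e^{-it\Delta_h}=e^{2idt/h^2}\prod_j e^{-ith^{-2}(S_j+S_j^{-1})}$ and bounds each factor on $l^1_h$ by the exponential power series, using $\|S_j+S_j^{-1}\|_{l^1\to l^1}\le 2$; the $l^\infty_h$ bound then comes from duality.

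What each buys: your operator-series argument is cleaner and entirely self-contained for the linear problem. The paper's multiplier method, however, has the decisive advantage that it transfers verbatim to the \emph{nonlinear} equation with potential: when one multiplies \eqref{defDL} by $|u_n|^{p-2}\overline{u_n}$ and takes imaginary parts, the terms $V_n|u_n|^p$ and $\lambda|u_n|^{2\sigma+p}$ are real and drop out, yielding exactly the same differential inequality. This is precisely how Lemma~\ref{lem3.1} (and hence the global well-posedness in Theorem~\ref{thm1.3}) is proved. Your convolution-kernel approach does not extend to the nonlinear flow, so while it is a perfectly valid proof of the present lemma, it does not set up the later argument. (A minor point: the $l^p_h$ norm in the paper uses unweighted counting measure, not $h^d$-scaled; this does not affect your interpolation step.)
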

\begin{proof}
Firstly, we consider the case when $1\leq p < +\infty$.

Let $f=\{f_n\}_{n\in h\Z^d}$, $u_n = e^{-i\Delta_ht}f_n$, then $u_n$ satisfies the equation
\begin{align}\label{eqnonDL}
i\p_t u_n - \Delta_h u_n = 0,
\end{align}
with the initial condition $u_n(0) = f_n$. 
Taking the inner product on both sides of (\ref{eqnonDL}) by $ i u_n$ and changing the variable, we obtain that 
\begin{align*}
\frac12 \p_t\Big(\norm{e^{-it\Delta_h}f}_{l^2_h}^2\Big)
=& \re\langle \p_t u_n,  u_n \rangle\\
= &
 \frac{1}{h^2}\re\Big\langle\sum_{j=1}^d \left(u_{n+he_j}+u_{n-he_j}-2u_n\right), iu_n \Big\rangle \\
 = &
 \frac{1}{h^2}\sum_{j=1}^d\re\big(\langle u_{n+he_j}, iu_n \rangle+\langle u_n, iu_{n+he_j} \rangle\big) \\
= &
0.
\end{align*}
This infers that 
\begin{align}\label{l2-linear}
\norm{e^{-it\Delta_h}f}_{l^2_h} =\norm{f}_{l^2_h}.
\end{align}

By multiplying both sides of (\ref{eqnonDL}) by $\aabs{u_n}^{p-2}\overline{u_n}$, we obtain
\begin{align*}
i\p_t u_n \aabs{u_n}^{p-2} \overline{u_n} = \frac{1}{h^2}\sum_{j=1}^d \left(u_{n+he_j}\aabs{u_n}^{p-2}\overline{u_n}+u_{n-he_j}\abs{u_n}^{p-2}\overline{u_n}-2u_n\aabs{u_n}^{p-2}\overline{u_n}\right).
\end{align*}
Taking the imaginary parts and summing over $n$, we get
\begin{align}\label{eq10.10.1}
\sum_{n\in h\mathbb{Z}^d}\re\left(\partial_t u_n \abs{u_n}^{p-2} \overline{u_n}\right) = \sum_{n\in h\mathbb{Z}^d}\im\left[\frac{1}{h^2}\sum_{j=1}^d \left(u_{n+he_j}\abs{u_n}^{p-2}\overline{u_n}+u_{n-he_j}\abs{u_n}^{p-2}\overline{u_n}\right)\right].
\end{align}
Then, by applying Young's inequality, the above estimate leads to
\begin{align}\label{eq2.a}
\frac{1}{p}\partial_t\sum_{n\in h\mathbb{Z}^d}\left(\abs{u_n}^{p}\right) 
& = \sum_{n\in h\mathbb{Z}^d}\re\left(\partial_t u_n \abs{u_n}^{p-2} \overline{u_n}\right)\notag\\
& \leq \frac{1}{h^2} \sum_{n\in h\mathbb{Z}^d}\sum_{j=1}^d \left(\abs{u_{n+he_j}}\abs{u_n}^{p-1}+\abs{u_{n-he_j}}\abs{u_n}^{p-1}\right)\notag \\
& \leq \frac{1}{h^2}\sum_{j=1}^d \sum_{n\in h\mathbb{Z}^d}\left(\frac{1}{p}\abs{u_{n+he_j}}^{p}+\frac{1}{p}\abs{u_{n-he_j}}^{p}\right)\notag \\
& \quad +\frac{1}{h^2}\sum_{j=1}^d \sum_{n\in h\mathbb{Z}^d}\frac{2p-2}{p}\abs{u_n}^{p}.
\end{align}
Using the change of variable
\begin{align*}
\sum_{n\in h\mathbb{Z}^d}\abs{u_{n+he_j}}^{p} = \sum_{n\in h\mathbb{Z}^d}\abs{u_{n}}^{p},
\end{align*}
equation (\ref{eq2.a}) further implies
\begin{align}\label{eq2.1}
\frac{1}{p}\partial_t\sum_{n\in h\mathbb{Z}^d}\abs{u_n}^{p} \leq \frac{2}{h^2}\sum_{n\in h\mathbb{Z}^d}\sum_{j=1}^d \abs{u_n}^{p} = \frac{2d}{h^2}\sum_{n\in h\mathbb{Z}^d}\abs{u_n}^{p},
\end{align}
which implies
\begin{align*}
\partial_t \sum_{n\in h\mathbb{Z}^d}\abs{u_n}^{p} \leq \frac{2dp}{h^2}\sum_{n\in h\mathbb{Z}^d}\abs{u_n}^{p}.
\end{align*}
Therefore, it gives
\begin{align*}
\sum_{n\in h\mathbb{Z}^d}\abs{u_n(t)}^p \leq  e^{2dpt/h^2} \sum_{n\in h\mathbb{Z}^d}\abs{f_n}^p.
\end{align*}
Consequently, for any $1\leq p < +\infty$, we have
\begin{align}\label{eq2.b}
\left(\sum_{n\in h\mathbb{Z}^d}\abs{u_n(t)}^p\right)^{1/p} \leq e^{2dt/h^2}\left(\sum_{n\in h\mathbb{Z}^d}\abs{f_n}^p\right)^{1/p}.
\end{align}
Notably, this bound is independent of $p$. By taking the limit as $p\rightarrow +\infty$, equation (\ref{eq2.b}) gives
\begin{align*}
\norm{u(t)}_{l^\infty_h} \leq e^{2dt/h^2}\norm{f}_{l^\infty_h}.
\end{align*}
Furthermore interpolation these estimates when $p=1$ and $p=+\infty$ with \eqref{l2-linear}, we obtain the desired estimates. 
This completes the proof of Lemma \ref{lem2.1}.
\end{proof}

Using Corollary \ref{cor1}, we can prove the following lemma, which is the key to establish the well-posedness of the Klein-Gordon equation.
\begin{lemma}\label{lem2.2}
For any $f\in l^p_h$, where $1\leq p\leq+\infty$, the following inequality holds:
$$
\norm{e^{-it\sqrt{1-\Delta_h}}f}_{l^p_h} \leq e^{2\sqrt d|1-\frac 2p| h^{-1}t}\norm{f}_{l^p_h}.
$$
\end{lemma}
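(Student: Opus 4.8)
The plan is to prove the bound at the three endpoints $p=1,2,\infty$ and then interpolate. The only real wrinkle compared with the proof of Lemma~\ref{lem2.1} is that $\sqrt{1-\Delta_h}$ is \emph{nonlocal}: multiplying $i\partial_t u=\sqrt{1-\Delta_h}u$ by $|u_n|^{p-2}\overline{u_n}$ and summing no longer produces a closed differential inequality for $\|u(t)\|_{l^p_h}$ with a sharp $p$-dependent growth rate, the way it did for $\Delta_h$. So instead I would extract the crude rate $2\sqrt d\,h^{-1}$ at $p=1,\infty$, the sharp rate $0$ at $p=2$, and recover the intermediate $p$ by interpolation; this reduction is the step that the nonlocality makes unavoidable, and it is the main (fairly mild) obstacle.

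At $p=2$ the operator $\sqrt{1-\Delta_h}$ is self-adjoint on $l^2_h$, since its Fourier multiplier $\sqrt{M(\x)}$ — with $M$ the real, positive symbol of $1-\Delta_h$ recorded in \eqref{def:mult-Delta} — is real-valued; hence $e^{-it\sqrt{1-\Delta_h}}$ is unitary and $\|e^{-it\sqrt{1-\Delta_h}}f\|_{l^2_h}=\|f\|_{l^2_h}$. (Equivalently $\partial_t\|u(t)\|_{l^2_h}^2=2\re\langle -i\sqrt{1-\Delta_h}u,u\rangle=0$ because $\langle\sqrt{1-\Delta_h}u,u\rangle\in\R$.) For $p=1$ (and $p=\infty$ identically, or by duality) I would expand
\[
e^{-it\sqrt{1-\Delta_h}}=\sum_{k\ge0}\frac{(-it)^k}{k!}\,(1-\Delta_h)^{k/2},
\]
and bound each term by submultiplicativity of the operator norm together with Lemma~\ref{lem2.0} applied with $\alpha=\tfrac12$,
\[
\big\|(1-\Delta_h)^{k/2}\big\|_{l^p_h\to l^p_h}=\big\|\big((1-\Delta_h)^{1/2}\big)^{k}\big\|_{l^p_h\to l^p_h}\le\big(2\sqrt d\,h^{-1}\big)^{k},
\]
which sums to $\|e^{-it\sqrt{1-\Delta_h}}f\|_{l^p_h}\le e^{2\sqrt d\,h^{-1}|t|}\|f\|_{l^p_h}$. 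Equivalently, since for fixed $h>0$ the symbol $e^{-it\sqrt{M(\x)}}$ is smooth and periodic, $e^{-it\sqrt{1-\Delta_h}}$ is convolution by a rapidly decaying kernel $K(t)=e^{-it\sqrt{1-\Delta_h}}\delta_0$, and from $\partial_t K=-i\sqrt{1-\Delta_h}K$ one gets $\tfrac{d}{dt}\|K(t)\|_{l^1_h}\le\|(1-\Delta_h)^{1/2}K(t)\|_{l^1_h}\le 2\sqrt d\,h^{-1}\|K(t)\|_{l^1_h}$; Gronwall together with $\|K(0)\|_{l^1_h}=1$ gives $\|K(t)\|_{l^1_h}\le e^{2\sqrt d\,h^{-1}|t|}$, and Young's inequality closes $p=1,\infty$.

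Finally, with endpoint operator norms $1$ on $l^2_h$ and $e^{2\sqrt d\,h^{-1}|t|}$ on $l^1_h$ and $l^\infty_h$, Riesz--Thorin interpolation of the convolution operator $e^{-it\sqrt{1-\Delta_h}}$ yields $\|e^{-it\sqrt{1-\Delta_h}}\|_{l^p_h\to l^p_h}\le e^{2\sqrt d\,h^{-1}|1-2/p|\,|t|}$ for every $1\le p\le\infty$, which is the claim. The remaining routine point is to check that the series above (equivalently, the partial sums of the kernels) converges in $\mathcal{B}(l^p_h)$ for all $p$ to the operator $e^{-it\sqrt{1-\Delta_h}}$ defined on $l^2_h$ by the multiplier $e^{-it\sqrt{M(\x)}}$; this is immediate since every operator involved is a convolution and the kernel partial sums converge in $l^1_h$.
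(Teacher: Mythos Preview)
Your argument is correct and follows the same overall scheme as the paper: unitarity on $l^2_h$, a crude $p$-independent bound $e^{2\sqrt d\,h^{-1}|t|}$ elsewhere, then interpolation. The one difference is in how you obtain the crude bound. You expand $e^{-it\sqrt{1-\Delta_h}}$ as a power series and invoke Lemma~\ref{lem2.0} with $\alpha=\tfrac12$ term by term (or equivalently bound the convolution kernel via Gronwall). The paper instead does precisely the energy computation you dismissed: multiplying by $|u_n|^{p-2}\overline{u_n}$ and summing yields
\[
\tfrac1p\,\partial_t\|u\|_{l^p_h}^p=\sum_n\im\big(|u_n|^{p-2}\overline{u_n}\,(1-\Delta_h)^{1/2}u_n\big)\le\|u\|_{l^p_h}^{p-1}\big\|(1-\Delta_h)^{1/2}u\big\|_{l^p_h}\le 2\sqrt d\,h^{-1}\|u\|_{l^p_h}^p,
\]
again by H\"older and Lemma~\ref{lem2.0}; so a closed differential inequality does arise, just not one carrying the sharp $|1-\tfrac2p|$ factor (nor did it in Lemma~\ref{lem2.1}---that factor comes from interpolation there too). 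Either route rests on the same ingredient, Lemma~\ref{lem2.0} at $\alpha=\tfrac12$, and both finish by interpolating with $p=2$.
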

\begin{proof}
Let $f = \{f_n\}_{n\in h\Z^d}$, $u_n=e^{-it \sqrt{1-\Delta_h}}f_n$, then $u_n$ satisfies
\begin{align}\label{eqnonKG}
i\partial_t u_n-\sqrt{1-\Delta_h} u_n=0,
\end{align}
with the initial data $u_n(0)=f_n$. 
Taking the inner product on both sides of (\ref{eqnonKG}) by $ i u_n$ and using 
\eqref{def:mult-Delta} and Parseval's identity, 
\begin{align*}
\p_t\Big( \norm{e^{-it\sqrt{1-\Delta_h}}f}_{l^2_h}^2 \Big)
=& 
\re\langle i\p_t u_n, iu_n\rangle \\
=& 
\re\big\langle \sqrt{1-\Delta_h} u_n, iu_n \big\rangle\\
=& 
\re\big\langle M(\xi)^{1/2} \hat u(\xi), i \hat u(\xi)\big \rangle\\
=&0. 
\end{align*}
This gives that 
\begin{align}\label{KG-l2-linear}
\big\|e^{-it\sqrt{1-\Delta_h}}f\big\|_{l^2_h} = \norm{f}_{l^2_h}.
\end{align}

Multiplying both sides of \eqref{eqnonKG} by $\abs{u_n}^{p-2}\overline{u_n}$ and taking the imaginary parts, we obtain
\begin{align*}
\frac{1}{p}\partial_t \sum_{n\in h\mathbb{Z}^d} |u_n|^{p}
 = &
  \sum_{n\in h\mathbb{Z}^d} \re\left(|u_n|^{p-2}\overline{u_n}\partial_t u_n\right) \\
  = &
  \sum_{n\in h\mathbb{Z}^d} \im\left(|u_n|^{p-2}\overline{u_n}\left(1-\Delta_h \right)^{1/2}u_n\right).
\end{align*}
Applying Lemma \ref{lem2.0} and using H\"older's inequality, there  exists  a constant $C$ such that
\begin{align}\label{eq2.a.1}
\frac{1}{p}\partial_t\norm{u}^{p}_{l^{p}_h}&=\sum_{n\in h\mathbb{Z}^d} \im\left(|u_n|^{p-2}\overline{u_n}\left(1-\Delta_h \right)^{1/2}u_n\right) \notag\\
                                      &\leq  \norm{u}^{p-1}_{l^{p}_h}\norm{\left(1-\Delta_h \right)^{1/2}u}_{l^{p}_h}\notag\\
&\leq 2\sqrt d h^{-1} \norm{u}_{l^p_h}^p.
\end{align}
This implies 
\begin{align*}
\norm{e^{-it(1-\Delta)^{1/2}}f}_{l^p_h}\leq e^{2\sqrt d  h^{-1} t}\norm{f}_{l^p_h},\quad 1\leq p\leq \infty.
\end{align*}
Similar as the proof of Lemma \ref{lem2.1}, we use interpolation with \eqref{KG-l2-linear} and obtain the desired estimate. 
This completes the proof of Lemma \ref{lem2.2}.
\end{proof}

\vskip 0.2cm

\subsection{$l^2$-control of the solution to  the DKG}

While we do not possess the conservation law for the $l^2$ norm in DKG, we do have the subsequent $l^2$ norm estimate.
\begin{lemma}\label{lem2.2.3}
Let $h\in (0,1]$, and let $u_n$ be the solution of equation \eqref{defKG} with $\lambda = 1$. Assume that $V_n$ satisfies 
$$
\inf\limits_{n} \big(V_n +2h^{-2}d\big)>0.
$$
 If $(f,g)\in l^2_h\times l^2_h$, then 
 $$
 \norm{(u,\partial_t u)}_{l^2_h\times l^2_h}\leq e^{Ch^{-2}|t|}\norm{(f,g)}_{l^2_h\times l^2_h},
 $$
 where the constant $C>0$ is only dependent of $d, \sigma$ and $\inf\limits_{n} \big(V_n +2h^{-2}d\big)$. 
\end{lemma}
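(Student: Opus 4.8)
The plan is to run an energy estimate at the level of the natural (non-conserved) energy functional and then close a Gronwall-type inequality. Define
\[
\mathcal{E}(t)=\frac12\sum_{n\in h\Z^d}\Big(|\p_t u_n|^2+\frac1{h^2}\sum_{j=1}^d(u_{n+he_j}-u_n)^2+(V_n+2h^{-2}d)|u_n|^2+\frac1{\s+1}|u_n|^{2\s+2}\Big),
\]
which in the defocusing case ($\lambda=1$) is exactly the conserved energy $E(u,\p_tu)$ up to shifting the potential by the constant $2h^{-2}d$; concretely $\mathcal{E}(t)=E(u,\p_tu)+h^{-2}d\|u(t)\|_{l^2_h}^2$. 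Since $E$ is conserved along \eqref{defKG}, we get
\[
\tfrac{d}{dt}\mathcal{E}(t)=h^{-2}d\,\tfrac{d}{dt}\|u(t)\|_{l^2_h}^2=2h^{-2}d\,\re\langle u(t),\p_tu(t)\rangle\le 2h^{-2}d\,\|u(t)\|_{l^2_h}\|\p_t u(t)\|_{l^2_h}.
\]
The hypothesis $\inf_n(V_n+2h^{-2}d)=:c_0>0$ is precisely what makes $\mathcal{E}$ control the relevant $l^2$ quantities: it gives $\|\p_t u(t)\|_{l^2_h}^2\le 2\mathcal{E}(t)$ and $c_0\|u(t)\|_{l^2_h}^2\le 2\mathcal{E}(t)$ (all the other terms in $\mathcal{E}$ are nonnegative, using $\lambda=1$ so the nonlinear term is $\ge0$). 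Hence the right-hand side above is bounded by $2h^{-2}d\cdot c_0^{-1/2}\cdot 2\mathcal{E}(t)=4h^{-2}d\,c_0^{-1/2}\mathcal{E}(t)$, so Gronwall yields $\mathcal{E}(t)\le e^{Ch^{-2}|t|}\mathcal{E}(0)$ with $C=C(d,c_0)$.

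To finish, I convert between $\mathcal{E}$ and the $l^2_h\times l^2_h$ norm of $(u,\p_t u)$ at times $0$ and $t$. From the lower bounds above, $\|(u(t),\p_t u(t))\|_{l^2_h\times l^2_h}^2=\|u(t)\|_{l^2_h}^2+\|\p_t u(t)\|_{l^2_h}^2\le (2c_0^{-1}+2)\mathcal{E}(t)$. For the initial data I need an upper bound $\mathcal{E}(0)\lesssim \|(f,g)\|_{l^2_h\times l^2_h}^2$; the kinetic and potential terms are clearly $\lesssim_{d,h}\|f\|_{l^2_h}^2+\|g\|_{l^2_h}^2$ (the gradient term costs a factor $h^{-2}$, and $|V_n+2h^{-2}d|\le \|V\|_{l^\infty}+2h^{-2}d$), but the nonlinear term $\frac1{\s+1}\|f\|_{l^{2\s+2}_h}^{2\s+2}$ needs the embedding $l^2_h\hookrightarrow l^{2\s+2}_h$, so it is bounded by $\|f\|_{l^2_h}^{2\s+2}$, which is not linear in the norm. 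This is acceptable for a well-posedness/a priori bound statement but means the constant $C$ in the final inequality must be allowed to absorb a dependence on $\|(f,g)\|_{l^2_h\times l^2_h}$, or the estimate should be read for fixed initial data; alternatively one states the cleaner bound $\mathcal{E}(t)\le e^{Ch^{-2}|t|}\mathcal{E}(0)$ and records the norm inequality as a consequence. Since the lemma statement already has the constant $C$ depending on the data through $\inf_n(V_n+2h^{-2}d)$, I will phrase the energy version as the core estimate and deduce the displayed inequality from it.

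The main obstacle is the nonlinear term: it has the \emph{good} (nonnegative) sign in the defocusing case, which is exactly why the lower bounds $\|\p_t u\|_{l^2_h}^2, c_0\|u\|_{l^2_h}^2 \le 2\mathcal{E}$ hold, but it prevents a clean two-sided equivalence between $\mathcal{E}(t)$ and $\|(u,\p_t u)\|^2_{l^2_h\times l^2_h}$ at $t=0$ unless one pays with the $l^{2\s+2}$-norm of $f$. Everything else — the differential identity for $\mathcal{E}$, Cauchy–Schwarz, and Gronwall — is routine. One should also note that this estimate is exactly the a priori bound promoted, via the local theory of Theorem \ref{thmLOCKG} and the blow-up alternative, to the global well-posedness in $l^2_h\times l^2_h$, which is the $p=2$ endpoint feeding into the $l^p_h$ argument of Theorem \ref{thm1.4}.
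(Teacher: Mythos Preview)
Your argument is correct and follows the same route as the paper: a Gronwall inequality for a shifted energy functional. The only cosmetic difference is that you keep the discrete gradient term in $\mathcal{E}$ and invoke conservation of $E$ to compute $\tfrac{d}{dt}\mathcal{E}=2h^{-2}d\,\langle u,\partial_t u\rangle$ directly, whereas the paper multiplies \eqref{defKG} by $\partial_t u_n$, absorbs the diagonal part $-2u_n$ of $\Delta_h$ into the potential term, and leaves the off-diagonal piece $\tfrac{1}{h^2}\sum_j(u_{n+he_j}+u_{n-he_j})\partial_t u_n$ on the right (so their ``energy'' is your $\mathcal{E}$ minus the gradient term). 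Both versions yield $\partial_t\mathcal{E}\lesssim h^{-2}\mathcal{E}$ after Cauchy--Schwarz/Young and the hypothesis $c_0:=\inf_n(V_n+2h^{-2}d)>0$.

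Your caveat about the nonlinear term at $t=0$ is well taken and is in fact a gap the paper shares: from $\mathcal{E}(t)\le e^{Ch^{-2}|t|}\mathcal{E}(0)$ one only gets $\|(u,\partial_t u)\|_{l^2_h\times l^2_h}^2\lesssim e^{Ch^{-2}|t|}\big(\|(f,g)\|_{l^2_h\times l^2_h}^2+\|f\|_{l^2_h}^{2\sigma+2}\big)$, so the displayed inequality in the lemma holds with the prefactor depending also on $\|f\|_{l^2_h}$ (harmless for the global-in-time application, but not literally what is stated). One correction: your attempt to reconcile this by saying ``$C$ already depends on the data through $\inf_n(V_n+2h^{-2}d)$'' is off --- that infimum involves only the potential $V$, not $(f,g)$.
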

\begin{proof}
By multiplying $\partial_t u_n$ on both sides of equation \eqref{defKG}, we obtain
\begin{align*}
\partial_{tt} u_n(t)\partial_t u_n=\frac1{h^2}\sum_{j=1}^d \left(u_{n+he_j}\partial_t u_n+u_{n-he_j}\partial_t u_n-2u_n\partial_t u_n\right)-V_nu_n \partial_t u_n- |u_n|^{2\sigma}u_n \partial_t u_n.
\end{align*}
Applying Young's inequality and summing over $n$, we have
\begin{align}\label{equation4.1}
&\quad\,\,\,\partial_t\sum_{n\in h\mathbb{Z}^d}\left(\frac{1}{2}\abs{\partial_t u_n}^2+\frac{V_n+2h^{-2}d}{2}\abs{u_n}^2+\frac{1}{2\s+2}\abs{u_n}^{2\s+2}\right)\notag\\
&=\frac1{h^2}\sum_{n\in h\mathbb{Z}^d}\sum_{j=1}^d\left(u_{n+he_j}\partial_t u_n+u_{n-he_j}\partial_t u_n\right) \notag\\
&\leq \frac1{h^2}\sum_{n\in h\mathbb{Z}^d}\sum_{j=1}^d\left(\frac{1}{2}\abs{u_{n+he_j}}^2+\frac{1}{2}\abs{u_{n-he_j}}^2+\abs{\partial_t u_n}^2\right) \notag\\
&=\frac d{h^2}\sum_{n\in h\mathbb{Z}^d} \left(\abs{u_{n}}^2+\abs{\partial_t u_n}^2\right).
\end{align}
This implies
\begin{align*}
\partial_t \sum_{n\in h\mathbb{Z}^d}\left(\abs{\partial_t u_n}^2+\abs{u_n}^2\right)\leq Ch^{-2}\sum_{n\in h\mathbb{Z}^d} \left(\abs{\partial_t u_n}^2+\abs{u_n}^2\right).
\end{align*}
Thus, we conclude the proof of Lemma \ref{lem2.2.3}.
\end{proof}

\vskip 1cm

\section{Well-posedness of discrete nonlinear Schr\"odinger equation}
\vskip 0.3cm
\subsection{Local well-posedness}

In this section, we will establish the local well-posedness of the DNLS. Using Duhamel's formula for the nonlinear Schr\"odinger equation, we define a mapping as follows:
\begin{align}\label{eq3.1}
\Phi(u_n)(t)\triangleq e^{-it\Delta_h}u_{n,0}+i\int_0^t e^{-i(t-s)\Delta_h}\left(V_nu_n+|u_n|^{2\sigma}u_n\right)(s)\,ds.
\end{align}
Moreover denote 
\begin{align}\label{eq10.24.1}
\Phi(u)(t)=\{\Phi(u_n)(t)\}_{n\in h\Z^d},\quad F(u)=\{V_nu_n+|u_n|^{2\s}u_n\}_{n\in h\Z^d}.
\end{align}
Then
$$
\Phi(u)=e^{-it\Delta_h}u_{0}+i\int_0^t e^{-i(t-s)\Delta_h}F(u)(s)ds.
$$
We intend to prove that $\Phi$ defines a contraction mapping on the space $X_R$ defined by
\begin{align*}
X_R\triangleq \left\{u\in C([-T,T]:l^p_h):\|u\|_{L_t^{\infty}{l^p_h}([0,T])}\leq R\right\}.
\end{align*}

Firstly, we claim that $\Phi$ is bounded from $X_R$ to $X_R$. Without loss of generality, we consider the case where $t>0$. Utilizing Lemma \ref{lem2.1}, there exist positive constants $C_1$ and $C_2$ such that
\begin{align}\label{eq3.4}
&\quad\,\,\|{\Phi(u)(t)}\|_{L_t^{\infty}l^p_h([0,T])}\notag\\ 
&\le e^{2d|1-\frac 2p|T/h^2}\|u_{0}\|_{l^p_h}+\|{\int_0^t e^{-i(t-s)\Delta_h}F(u)ds}\|_{L_t^{\infty}l^p_h([0,T])} \notag\\
&\le e^{2d|1-\frac 2p|T/h^2}\|u_0\|_{l^p_h}+\|{\int_0^t \|{e^{-i(t-s)\Delta_h}F(u)}\|_{l^p_h}ds}\|_{L_t^{\infty}([0,T])}\notag\\
&\leq 
e^{2d|1-\frac 2p|T/h^2}\|u_0\|_{l^p_h}+\|\int^t_0 e^{2d|1-\frac 2p|(t-s)/h^2}\,ds\|_{L_t^{\infty}([0,T])}\|F(u)\|_{L_t^{\infty}l^{p}_h([0,T])}\notag\\
&\le e^{2d|1-\frac 2p|T/h^2}\Big(\|u_0\|_{l^p_h}+C_1T\|{V}\|_{l^\infty_h}\|{u}\|_{L_t^{\infty}l^p_h([0,T])}+C_2T\|u\|_{L_t^{\infty}l^{p(2\sigma+1)}_h([0,T])}^{2\sigma+1}\Big). 
\end{align}
Noting that $l^{p(2\sigma+1)}_h\hookrightarrow {l^p_h}$, we further get 
\begin{align}\label{eq3.a.1}
&\|{\Phi(u)(t)}\|_{L_t^{\infty}l^p_h([0,T])}\notag\\ 
\le & e^{2d|1-\frac 2p|T/h^2}\Big(\|u_0\|_{l^p_h}+C_1T\|{V}\|_{l^\infty_h}\|{u}\|_{L_t^{\infty}l^p_h([0,T])}+C_2T\|u\|_{L_t^{\infty}l^{p}_h([0,T])}^{2\sigma+1}\Big). 
\end{align}
Set 
$
R=2\|u_0\|_{l^p_h},
$
then it follows that 
\begin{align*}
\|{\Phi(u)(t)}\|_{L_t^{\infty}l^p_h([0,T])} &\le e^{2dT|1-\frac 2p|/h^2}\big( \frac{1}{2}R+C_1T\|{V}\|_{l^\infty_h}R+C_2TR^{2\sigma+1}\big).
\end{align*}
Choose $T$ suitably small such that
\begin{align*}
e^{2dT|1-\frac 2p|/h^2}\leq \frac{3}{2}, \quad C_1T\|{V}\|_{l^\infty_h}\leq \frac{1}{12}, \quad C_2TR^{2\sigma}\leq \frac{1}{12},
\end{align*}
then we have
\begin{align*}
\|{\Phi(u)(t)}\|_{L_t^{\infty}l^p_h([0,T])}\leq R.
\end{align*}
This implies that $\Phi(u)\in X_R$ for any $u\in X_R$, establishing that $\Phi$ is bounded from $X_R$ to $X_R$.

Next, we proceed to prove that $\Phi$ is a contraction mapping on the space $X_R$. Given $u_n$ and $v_n$ in $X_R$, we have
\begin{align*}
(\Phi(u)-\Phi(v))(t)&=i\int_0^t e^{-i(t-s)\Delta_h}[F(u)-F(v)]ds.
\end{align*}
Employing a similar approach as in (\ref{eq3.4}) and \eqref{eq3.a.1}, there exist positive constants $C_1^{\prime}$ and $C_2^{\prime}$ such that
\begin{align}\label{eq3.6} 
&\quad\,\,\|\Phi(u)-\Phi(v)\|_{L_t^{\infty}l^p_h([0,T])}\notag\\
&\leq
\|\int^t_0 e^{2d|1-\frac 2p|(t-s)/h^2}\,ds\|_{L_t^{\infty}([0,T])}\|F(u)-F(v)\|_{L_t^{\infty}l^{p}_h([0,T])}\notag\\
&\leq 
e^{2d|1-\frac 2p|T/h^2}\bigg[C_1^{\prime}T\|{V}\|_{l^\infty_h}\|{u-v}\|_{L_t^{\infty}l^p_h([0,T])}\notag\\&\quad+C_2^{\prime}T\|u-v\|_{L^\infty_tl^p_h([0,T])}\left(\|{u}\|^{2\sigma}_{L^\infty_tl^p_h([0,T])}+\|{v}\|^{2\sigma}_{L^\infty_tl^p_h([0,T])}\right)\bigg].
\end{align}
Therefore, for any $u, v\in X_R$,  we have that 
\begin{align*}
\|\Phi&(u)-\Phi(v)\|_{L_t^{\infty}l^p_h([0,T])}\\
&\leq  e^{2d|1-\frac 2p|T/h^2}\left(C_1^{\prime}T\|V\|_{l^\infty_h}\|{u-v}\|_{L_t^{\infty}l^p_h([0,T])}
+C_2^{\prime}T R^{2\sigma}\|{u-v}\|_{L_t^{\infty}l^p_h([0,T])}\right).
\end{align*}
With $R=2\|{u_0}\|_{l^p_h}$ and similar to \eqref{eq3.a.1}, choosing sufficiently small $T$, we obtain
\begin{align*}
\|\Phi(u)-\Phi(v)\|_{L_t^{\infty}l^p_h([0,T])}\leq\frac{1}{2}\|{u-v}\|_{L_t^{\infty}l^p_h([0,T])} .
\end{align*}
Hence, we have demonstrated that $\Phi$ is a contraction mapping on space $X_R$, and by the Banach fixed point theorem, we establish the existence and uniqueness of the solution of the equation.

To prove the continuous dependence on the initial data, let $u_n$ and $v_n$ be the corresponding solutions of equation \eqref{defDL} with initial data $u_0$ and $v_0$, respectively. Then we have
\begin{align*}
u_n(t)-v_n(t)&=e^{-it\Delta_h}\left(u_{n,0}-v_{n,0}\right)+i\int_0^t e^{-i(t-s)\Delta_h}V_n\left(u_n-v_n\right)ds\notag\\&\quad+i\int_0^t e^{-i(t-s)\Delta_h}\left(|u_n|^{2\sigma}u_n-|v_n|^{2\sigma}v_n\right)ds.
\end{align*}
Similar to the argument in (\ref{eq3.5}) and (\ref{eq3.6}), we derive
\begin{align}\label{3.7}
\|{u(t)-v(t)}\|_{L_t^{\infty}l^p_h([0,T])}&\lesssim e^{2dT|1-\frac 2p|/h^2}\Big(\|{u_{0}-v_{0}}\|_{l^p_h}+T\|{V}\|_{l^\infty_h}\|{u-v}\|_{L_t^{\infty}l^p_h([0,T])}\notag\\&\quad +T R^{2\sigma}\|{u-v}\|_{L_t^{\infty}l^p_h([0,T])}\Big).
\end{align}
Choosing a sufficiently small time $T$, we establish that
\begin{align*}
\|{u(t)-v(t)}\|_{L_t^{\infty}l^p_h([0,T])}\lesssim\|{u_{0}-v_{0}}\|_{l^p_h},
\end{align*}
which completes the proof of Theorem \ref{thmLOC}.

\subsection{Global well-posedness}
First, we establish the boundedness of the solution:
\begin{lem}\label{lem3.1}
Let $u(t)=\{u_n(t)\}$ be a solution of equation \eqref{defDL}. For any $1\leq p \leq \infty$,
\begin{align*}
\norm{u(t)}_{l^p_h}\leq e^{2dh^{-2}t}\norm{u_0}_{l^p_h}.
\end{align*}
\end{lem}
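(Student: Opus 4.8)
The plan is to mimic the proof of the linear estimate in Lemma~\ref{lem2.1}, but now carrying the nonlinear term along. First I would note that the genuine solution $u(t)$ exists locally by Theorem~\ref{thmLOC} and is smooth enough in $t$ (the $l^p_h$-valued map is $C^1$ in time, since the right-hand side of \eqref{defDL} is locally Lipschitz on $l^p_h$), so all the manipulations below are justified on the maximal interval of existence; the bound then extends the solution globally. For $1\le p<+\infty$, multiply the equation $iu_n'-\Delta_h u_n+V_nu_n+\lambda|u_n|^{2\s}u_n=0$ by $|u_n|^{p-2}\overline{u_n}$, take imaginary parts, and sum over $n\in h\Z^d$. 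The key point is that both the potential term and the nonlinear term contribute nothing: $V_n$ is real, so $\im(V_n|u_n|^p)=0$, and $\lambda$ is real, so $\im(\lambda|u_n|^{2\s}|u_n|^p)=0$. Hence exactly the same identity \eqref{eq10.10.1} as in Lemma~\ref{lem2.1} holds, and the estimate proceeds verbatim.

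Concretely, after taking imaginary parts one gets
\begin{align*}
\frac1p\p_t\sum_{n\in h\Z^d}|u_n|^p=\sum_{n\in h\Z^d}\im\Big[\frac1{h^2}\sum_{j=1}^d\big(u_{n+he_j}|u_n|^{p-2}\overline{u_n}+u_{n-he_j}|u_n|^{p-2}\overline{u_n}\big)\Big],
\end{align*}
and then Young's inequality plus the change of variables $\sum_n|u_{n\pm he_j}|^p=\sum_n|u_n|^p$ gives, exactly as in \eqref{eq2.a}--\eqref{eq2.1},
\begin{align*}
\p_t\sum_{n\in h\Z^d}|u_n|^p\le \frac{2dp}{h^2}\sum_{n\in h\Z^d}|u_n|^p.
\end{align*}
Gr\"onwall's inequality then yields $\|u(t)\|_{l^p_h}\le e^{2dh^{-2}t}\|u_0\|_{l^p_h}$ for all $1\le p<+\infty$, with the constant independent of $p$. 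Taking $p\to+\infty$ handles the case $p=+\infty$, since $\|u(t)\|_{l^\infty_h}=\lim_{p\to\infty}\|u(t)\|_{l^p_h}$ (and likewise for $u_0$). Alternatively, for $p=+\infty$ one can argue directly: at a spatial point where $|u_n|$ is near its supremum, $\p_t|u_n|^2=2\re(\overline{u_n}\p_t u_n)=\tfrac2{h^2}\sum_j\im(\overline{u_n}(u_{n+he_j}+u_{n-he_j}))\le \tfrac{4d}{h^2}\|u(t)\|_{l^\infty_h}^2$, giving the same bound.

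There is no real obstacle here; the only thing to be careful about is the justification of differentiating $\|u(t)\|_{l^p_h}^p$ under the sum and the time-regularity of the solution, which is why I would first invoke local well-posedness and the $C^1$-in-time property before running the energy argument. One could avoid even this mild point by working with the Duhamel formulation and a Gr\"onwall argument on $\|u(t)\|_{l^p_h}$ directly, but the differential version is cleaner and matches the style of Lemma~\ref{lem2.1}. Note also that this a priori bound, combined with the blow-up alternative in Definition~\ref{def1}, immediately upgrades the local solution from Theorem~\ref{thmLOC} to a global one, which is how Theorem~\ref{thm1.3} will follow.
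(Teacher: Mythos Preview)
Your proposal is correct and follows essentially the same approach as the paper's proof: multiply the equation by $|u_n|^{p-2}\overline{u_n}$, take imaginary parts and sum over $n$, observe that the real-valued potential and nonlinear terms drop out, and then proceed exactly as in Lemma~\ref{lem2.1}. The paper presents the argument slightly more tersely (and with a harmless index shift, multiplying by $|u_n|^{p}\overline{u_n}$), but the substance is identical.
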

\begin{proof}
We begin by considering the case when $1\leq p < +\infty$.

Assuming $\{u_n\}$ is a solution, we multiply both sides of the equation by $\aabs{u_n}^p\overline{u_n}$ and obtain 
\begin{align*}
&i\partial_t u_n \aabs{u_n}^p \overline{u_n}-\left(V_n+2dh^{-2}h^{-2}\right)\aabs{u_n}^p\overline{u}_n - \lambda |u_n|^{2\sigma + p + 2}\\
=&\frac1{h^2}\sum^d_{j=1}\left(u_{n + he^j} \aabs{u_n}^p\overline{u_n}+u_{n - he^{j}} \aabs{u_n}^p\overline{u_n}\right).
\end{align*}
Summing over $n$, it gives that 
\begin{align*}
\quad\,\,\sum_{n\in h\Z^d}i\partial_t u_n \aabs{u_n}^p \overline{u_n}
&=
\frac1{h^2}\sum_{n\in h\Z^d}\sum^d_{j=1}(u_{n + he^j} \aabs{u_n}^p\overline{u_n}+u_{n - he^{j}} \aabs{u_n}^p\overline{u_n})\\
&\quad+
\left(V_n+2dh^{-2}h^{-2}\right)|u_n|^{p+2}+\lambda |u_n|^{2\sigma + p + 2}.
\end{align*}
Taking the imaginary parts on both sides, we further get that 
\begin{align*}
\sum_{n\in h\Z^d}\re\left(\partial_t u_n \aabs{u_n}^p \overline{u_n}\right)
&=\frac1{h^2}\im\sum_{n\in h\Z^d}\sum^d_{j=1}\left(u_{n + he^j} \aabs{u_n}^p\overline{u_n}+u_{n - he^{j}} \aabs{u_n}^p\overline{u_n}\right).
\end{align*}
Thus, we arrive at the same estimate as in \eqref{eq10.10.1}. Consequently, we obtain the desired estimate.
\end{proof}

The proof of Theorem $\ref{thm1.3}$ is standard; nevertheless, we present the proof here for the sake of completeness.
\begin{proof}[Proof of Theorem \ref{thm1.3}]
We proceed by contradiction. Let $u_n$ be the local solution obtained through Theorem \ref{thmLOC}. Suppose that there exists a maximal $T_{\ast} < +\infty$. Let $\e$ be a positive constant to be specified later.

Utilizing Duhamel's formula for the nonlinear Schr\"odinger equation, we define a mapping as follows:
\begin{align}\label{eq3.b}
\Phi(u_n)(t)\triangleq e^{-i\De_h (t-t_0)}u_n(t_0)+i\int_{t_0}^t e^{-i\De_h(t-s)}\left(V_nu_n+\aabs{u_n}^{2\s}u_n\right)(s)ds.
\end{align}
With the same notation as \eqref{eq10.24.1}, we write
$$
\Phi(u)(t)=e^{-i\De_h (t-t_0)}u(t_0)+i\int_{t_0}^t e^{-i\De_h(t-s)}F(u)(s)ds.
$$
We aim to show that $\Phi$ constitutes a contraction map on the space $X_{R}$. Here, $X_{R}$ is defined as
\begin{align*}
X_{R}\triangleq \left\{u\in C([T_{\ast}-\e,T_{\ast}-\e+\de]:l^p_h):\norm{u}_{L_t^{\infty}l^p_h([T_{\ast}-\e,T_{\ast}-\e+\de])}\leq R\right\}.
\end{align*}
Our goal is to demonstrate the existence of a sufficiently small constant $\de$ such that $\Phi$ acts as a contraction map on the space $X_{R}$. Consequently, by virtue of the Banach fixed point theorem, the solution also exists within the time interval $[T_{\ast}-\e,T_{\ast}-\e+\de]$. However, this will yield a contradiction, as we will ultimately choose $\e<\de$, which contradicts the maximality of $T_{\ast}$.

To begin, we establish that $\Phi$ maps from $X_{R}$ to $X_{R}$. Accroding to Lemma \ref{lem3.1}, for any $1\leq r \leq +\infty$
\begin{align} \label{eqthm1.3.1}
\norm{u(T_{\ast}-\e)}_{l^p_h}\leq e^{2d(T_{\ast}-\e)/h^2}\norm{u_{0}}_{l^p_h}.
\end{align}
In line with the proof of Theorem $\ref{thmLOC}$, we have
\begin{align}\label{eqthm1.3.1}
&\quad\,\,\norm{\Phi(u)(t)}_{L_t^{\infty}l^p_h([T_{\ast}-\e,T_{\ast}-\e+\de])}\notag\\ 
&\le \norm{e^{-i\De_h (t-T_{\ast}+\e)}u(T_{\ast}-\e)}_{L_t^{\infty}l^p_h([T_{\ast}-\e,T_{\ast}-\e+\de])}\notag\\
&\quad+\normb{\int_{T_{\ast}-\e}^{T_{\ast}-\e+\de} e^{-i\De_h(t-s)}F(u)ds}_{L_t^{\infty}l^p_h([T_{\ast}-\e,T_{\ast}-\e+\de])}. 
\end{align}
Using similar argument in \eqref{eq3.6}, by Lemma \ref{lem2.2} and \eqref{eqthm1.3.1}, and note that $|1-\frac{2}{p}|\leq 1$, there exist constants $C_1$ and $C_2$ such that
\begin{align}
          \eqref{eqthm1.3.1}   &\leq  e^{2d\de/h^2}\norm{u(T_{\ast}-\e)}_{l^p_h}\notag\\
&\quad+\normo{\int_{T_{\ast}-\e}^{T_{\ast}-\e+\de}\normb{ e^{-i\De_h(t-s)}F(u)}_{l^p_h}ds}_{L_t^{\infty}([T_{\ast}-\e,T_{\ast}-\e+\de])}\notag\\
                               &\le e^{2d\de/h^2}\bigg(e^{2d(T_{\ast}-\e)/h^2}\norm{u_{0}}_{l^p_h}+C_1\de\norm{V}_{l^\infty_h}\norm{u}_{L_t^{\infty}l^p_h([T_{\ast}-\e,T_{\ast}-\e+\de])}\notag\\
                               &\quad+C_2\de\norm{\aabs{u}^{2\s}u}_{L_t^{\infty}l^p_h([T_{\ast}-\e,T_{\ast}-\e+\de])}\bigg) \notag\\
                               &\le e^{2d\de/h^2}\left(e^{2d(T_{\ast}-\e)/h^2}\norm{u_{0}}_{l^p_h}+C_1\de\norm{V}_{l^\infty_h}R+C_2\de R^{2\s+1}\right).
\end{align}
By choosing $\de \leq \frac{ln(\frac{3}{2})h^2}{2d}$, we can set
\begin{align*}
\norm{{\Phi(u)(t)}}_{L_t^{\infty}l^p_h([T_{\ast}-\e,T_{\ast}-\e+\de])}
\leq 
\frac{3}{2}e^{2dT_{\ast}/h^2}\norm{u_{0}}_{l^p_h}+\frac{3}{2}\left(C_1\de\norm{V}_{l^\infty_h}R+C_2\de R^{2\s+1}\right).
\end{align*}
Choosing $R=2e^{2dT_{\ast}/h^2}\norm{u_{0}}_{l^p_h}$ and take suitably small $\de$ such that
\begin{align*}
C_1\de\norm{V}_{l^\infty_h}\leq \frac{1}{6}, \quad C_2\de R^{2\s}\leq \frac{1}{6},
\end{align*}
therefore we have
\begin{align*}
\norm{\Phi(u)(t)}_{L_t^{\infty}l^p_h([T_{\ast}-\e,T_{\ast}-\e+\de])}\leq 2 e^{2dpT_{\ast}/h^2}\norm{u_{0}}_{l^p_h}=R,
\end{align*}
indicating that $\Phi$ maps from $X_{R}$ to $X_{R}$.

Now, given $u,v\in X_{R}$, and using an argument analogous to (\ref{eq3.6}), there exist constants $C_1^{\prime}$ and $C_2^{\prime}$ such that
\begin{align}\label{eq3.9}
&\quad\,\,\norm{\Phi(u)-\Phi(v)}_{L_t^{\infty}l^p_h([T_{\ast}-\e,T_{\ast}-\e+\de])}\notag\\
&\leq
\normo{\int_{T_{\ast}-\e}^{T_{\ast}-\e+\de}\normo{ e^{-i\De_h(t-s)}\left(F(u)-F(v)\right)}_{l^p_h}ds}_{L_t^{\infty}([T_{\ast}-\e,T_{\ast}-\e+\de])}\notag\\
&\leq e^{2d\de/h^2}\bigg(C_1^{\prime}\de \norm{V}_{l^\infty_h}\norm{u-v}_{L_t^{\infty}l^p_h([T_{\ast}-\e,T_{\ast}-\e+\de])}\notag\\
&\quad+C_2^{\prime}\de \norm{\left(u-v\right)(|u|^{2\s}+|v|^{2\s})}_{L_t^{\infty}l^p_h([T_{\ast}-\e,T_{\ast}-\e+\de])}\bigg) \notag \\
&\leq e^{2d\de/h^2}\bigg(C_1^{\prime}\de \norm{V}_{l^\infty_h}\norm{u-v}_{L_t^{\infty}l^p_h([T_{\ast}-\e,T_{\ast}-\e+\de])}\notag\\
&\quad+C_2^{\prime}\de  R^{2\s+1}\norm{u-v}_{L_t^{\infty}l^p_h([T_{\ast}-\e,T_{\ast}-\e+\de])}\bigg).
\end{align}
Taking $\de$ such that
\begin{align*}
C_1^{\prime}\de e^{2d\de/h^2}\norm{V}_{l^\infty_h}\leq \frac{1}{4},\quad C_2^{\prime}\de e^{2d\de/h^2} R^{2\s+1}\leq \frac{1}{4},
\end{align*}
then
\begin{align*}
\norm{\Phi(u)-\Phi(v)}_{L_t^{\infty}l^p_h([T_{\ast}-\e,T_{\ast}-\e+\de])}\leq\frac{1}{2}\norm{u-v}_{L_t^{\infty}l^p_h([T_{\ast}-\e,T_{\ast}-\e+\de])} .
\end{align*}
As a result, we have shown that $\Phi$ acts as a contraction map on the space $X_{R}$. By invoking the Banach fixed point theorem, we establish the existence and uniqueness of the solution to the equation within the time interval $[T_{\ast}-\e,T_{\ast}-\e+\de]$. Similarly to the proof of Theorem \ref{thmLOC}, we can verify the continuous dependence of $\Phi(u_n)(t)$ with respect to $u_0$.

Consequently, we conclude that the solution is well-posed within the time interval $[T_{\ast}-\e,T_{\ast}-\e+\de]$. By choosing $\e< \de/2$, we ensure that $T_{\ast}-\e+\de>T_{\ast}$, which contradicts our initial assumption.
\end{proof}

\vskip 0.2cm
\section{Nonlinear discrete Klein-Gordon equation}
\vskip 0.2cm

\subsection{Local well-posedness}
 
In this subsection, we establish the proof for Theorem \ref{thmLOCKG}. Define 
$$\psi_n = (\left(1-\De_h \right)^{-1/2}\p_t - i)u_n,$$ 
which leads to the following system of equations:
\begin{align}
\begin{split}
 \left \{
\begin{array}{ll}
\p_t u_n=\re\left[\sqrt{1-\De_h}\psi_n\right], \\
u_n=-\im(\psi_n).
\end{array}
\right.
\end{split}
\end{align}
This implies that $\psi_n$ satisfies the following equation:
\begin{align*}
\big(\p_t + i\sqrt{1-\De_h}\big)\psi_n=-\left(1-\De_h \right)^{-1/2}\left((V_n-1)u_n+\la|u_n|^{2\sigma}u_n\right).
\end{align*}
By utilizing Duhamel's formula for the wave equation, we define a mapping $\Phi$ as follows:
\begin{align*}
\Phi(\psi_n)(t) &= e^{-i\sqrt{1-\De_h} t}\psi_{n,0} \\
&\quad - \int_0^t e^{-i\sqrt{1-\De_h}(t-s)}\left(1-\De_h \right)^{-1/2}\left(\la|u_n|^{2\sigma}u_n+(V_n-1)u_n\right)ds.
\end{align*}
Moreover, we denote
\begin{align}\label{eq10.24.2}
\Phi(\psi)(t)=\{\Phi(\psi_n)(t)\}_{n\in h\Z^d},\quad F(u)=\{\left(1-\De_h \right)^{-1/2}\left(\la|u_n|^{2\sigma}u_n+(V_n-1)u_n\right)\}_{n\in h\Z^d}.
\end{align}
Then
$$
\Phi(\psi)(t)=e^{-i\sqrt{1-\De_h} t}\psi_{0}- \int_0^t e^{-i\sqrt{1-\De_h}(t-s)}F(u)(s)ds.
$$
Our objective is to demonstrate that $\Phi(\psi_n)$ forms a contraction map on the space $X_R$, defined by:
\begin{align*}
X_R\triangleq \left\{\psi\in C([-T,T]:l^p_h):\norm{\psi}_{L^\infty_tl^p_h([0,T])}\leq R\right\},
\end{align*}
where the constants $T$ and $R$ will be determined. We will focus on the case where $t>0$, as the case for $t<0$ can be proven similarly. Utilizing Corollary \ref{cor1} and Lemma \ref{lem2.2}, there exist positive constants $C_1,...,C_5$ dependent of $d,h$ and $\norm{V}_{l^{\infty}_h}$ such that:
\begin{align}\label{eq4.1}
&\quad\,\,\norm{\Phi(\psi)(t)}_{L_t^{\infty}l^p_h([0,T])}\notag\\
&\le 
\norm{e^{-i\sqrt{1-\De_h} t}\psi_0}_{L_t^{\infty}l^p_h([0,T])}+\normo{\int_0^t e^{-i\sqrt{1-\De_h}(t-s)}F(u)\,ds}_{L_t^{\infty}l^p_h([0,T])} \notag\\
&\leq
e^{2\sqrt d|1-\frac 2p| h^{-1}T}\norm{\psi_0}_{l^p_h}+\normo{\int_0^t \norm{e^{-i\sqrt{1-\De_h}(t-s)}F(u)}_{l^p_h}\,ds}_{L_t^{\infty}([0,T])}\notag\\
&\leq 
e^{2\sqrt d|1-\frac 2p| h^{-1}T}\bigg(\norm{\psi_0}_{l^p_h}+C_1T\normo{ \left(1-\De_h \right)^{-1/2}\left(\la\aabs{u}^{2\s}u\right)}_{L_t^{\infty}l^p_h([0,T])}\notag\\
&\quad +
C_2T\norm{\left(1-\De_h \right)^{-1/2}(V-1)}_{l^\infty_h}\norm{u}_{L_t^{\infty}l^p_h([0,T])} \bigg)\notag\\
&\leq 
e^{2\sqrt d|1-\frac 2p| h^{-1}T}\left(\norm{\psi_0}_{l^p_h}+C_3T\norm{ \psi}^{2\s+1}_{L_t^{\infty}{l^p_h}([0,T])} +C_4T\norm{ \psi}_{L_t^{\infty}l^p_h([0,T])}\right) \notag\\
&\leq 
e^{2\sqrt d|1-\frac 2p| h^{-1}T}\left(\norm{\psi_0}_{l^p_h}+C_3TR^{2\s+1}+C_4TR\right).
\end{align}
By selecting $R=2\norm{\psi_0}_{l^p_h}$, similar to \eqref{eq3.a.1}, taking suitably small $T$, we ensure that $\norm{\Phi(\psi)(t)}_{L_t^{\infty}l^p_h([0,T])}< 2\norm{\psi_0}_{l^p_h}$, thus demonstrating that $\Phi$ is bounded from $X_R$ to $X_R$. 

Now, for $\psi,\tilde{\psi}\in X_R$, employing a similar argument as in (\ref{eq4.1}), we can find constants $C_1^{\prime},C_2^{\prime}$ that satisfy the following:
\begin{align}\label{equation4.3}
&\quad\,\,\norm{(\Phi(\psi)-\Phi(\tilde{\psi}))(t)}_{L_t^{\infty}l^p_h([0,T])}\notag\\
&\leq e^{2\sqrt d|1-\frac 2p| h^{-1}T}\bigg(C_1^{\prime}T\norm{\left(\psi-\tilde{\psi}\right)\left(|\psi|^{2\s}+|\tilde{\psi}|^{2\s}\right)}_{L_t^{\infty}l^p_h([0,T])}\notag\\
&\quad+
C_2^{\prime}T\norm{\psi-\tilde{\psi}}_{L_t^{\infty}l^p_h([
0,T])}\bigg) \notag \\
&\leq e^{2\sqrt d|1-\frac 2p| h^{-1}T}\left(C_1^{\prime}T R^{2\s}+C_2^{\prime}T\right)\norm{\psi-\tilde{\psi}}_{L_t^{\infty}l^p_h([0,T])}.
\end{align}
By selecting $R=2\norm{\psi_{0}}_{l^p_h}$ and taking suitably small $T$, we obtain
\begin{align*}
\norm{(\Phi(\psi)-\Phi(\tilde{\psi}))(t)}_{L_t^{\infty}l^p_h([0,T])}\leq\frac{1}{2}\norm{\psi-\tilde{\psi}}_{L_t^{\infty}l^p_h([0,T])} .
\end{align*}
Hence, $\phi$ is a contraction map on $X_R$, and applying the Banach fixed point theorem, we establish the existence and uniqueness of the solution to the equation.

To prove the continuous dependence of $\phi(\psi)(t)$ with respect to $\psi_0$, let $u$ and $v$ be solutions of equation \eqref{defDL} with initial data $\psi_0$ and $\tilde{\psi}_0$, respectively. Similar to the argument in (\ref{eq3.5}) and (\ref{eq3.6}), we deduce:
\begin{align*}
\norm{\psi(t)-\tilde{\psi}(t)}_{L_t^{\infty}l^p_h([0,T])}
&\lesssim e^{2\sqrt d|1-\frac 2p| h^{-1}T}\norm{\psi_0-\tilde{\psi}_{0}}_{l^p_h}\\
&\quad +
e^{2\sqrt d|1-\frac 2p| h^{-1}T}\left( R^{2\s}+1\right)T\norm{\psi-\tilde{\psi}}_{L_t^{\infty}l^p_h([0,T])}.
\end{align*}
By following analogous steps and selecting $T$ to be sufficiently small, we conclude that
\begin{align*}
\norm{\psi(t)-\tilde{\psi}(t)}_{L_t^{\infty}l^p_h([0,T])}\lesssim\norm{\psi_0-\tilde{\psi}_{0}}_{l^p_h},
\end{align*}
which verifies the continuous dependence of $\phi(\psi)$ and completes the proof of Theorem \ref{thmLOCKG}.

\subsection{Global well-posedness in the defocusing case}

Firstly we prove the of boundedness of $u_n$.

\begin{lem}\label{lem4.0}
Let $d\geq 1$, $1\leq p\leq +\infty$, $0<h\leq 1$, $v=\{v_n\}_{n\in h\Z^d}$ is real-valued sequence.  Assume that $V=\{V_n\}_{n\in h\Z^d}$ satisfing
$$
\norm{V}_{\l^\infty_h}<\infty.
$$ 
Denote $(f,g)=\{(f_n,g_n)\}_{n\in h\Z^d}$, 
suppose  that $(f,g)\in l^p_h\times l^p_h$, and let $v_n$ be the solution of
\begin{align}\label{eqthm4.1.1}
\begin{split}
 \left \{
\begin{array}{ll}
\p_{tt}v_{n}-\Delta_hv_n+V_nv_n =0,\\
v_{n,0}=f_n, \quad\p_t v_n(0)=g_n,
\end{array}
\right.
\end{split}
\end{align}
then
\begin{align*}
\norm{(v,\p_t v)}_{l^p_h\times l^p_h}\le  Ch^{-1}e^{Ch^{-1}t}\norm{(f,g)}_{l^p_h\times l^p_h},
\end{align*}
where the constant $C$ only dependent of $d$ and $\norm{V}_{l^\infty_h}$.
\end{lem}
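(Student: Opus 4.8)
\emph{Proof strategy.} The plan is to treat the potential term, together with the shift making the operator $1-\De_h$, as a source for the free discrete Klein--Gordon flow, to write the solution through Duhamel's formula, and to close an a priori estimate by Gr\"onwall's inequality. Set $\om:=\sqrt{1-\De_h}$; by \eqref{def:mult-Delta} its Fourier multiplier $\sqrt{M(\x)}$ is $\geq 1$, so $\om$ is invertible, and rewriting \eqref{eqthm4.1.1} as $\p_{tt}v_n+\om^2 v_n=(1-V_n)v_n$, the solution admits the representation
\begin{align*}
v(t)&=\cos(t\om)f+S(t)g+\int_0^t S(t-s)\big((1-V)v\big)(s)\,ds,\\
\p_t v(t)&=-\om\sin(t\om)f+\cos(t\om)g+\int_0^t\cos\big((t-s)\om\big)\big((1-V)v\big)(s)\,ds,
\end{align*}
where $S(\ta):=\int_0^\ta\cos(s\om)\,ds$ is the sine propagator. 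Local existence and uniqueness in $C_tl^p_h$ are obtained exactly as in the proof of Theorem~\ref{thmLOCKG} (with the nonlinearity set to zero); the a priori bound below then makes the solution global.

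\emph{Step 1: linear bounds.} Using $\cos(t\om)=\tfrac12(e^{it\om}+e^{-it\om})$, $\sin(t\om)=\tfrac1{2i}(e^{it\om}-e^{-it\om})$, and Lemma~\ref{lem2.2} — whose energy argument in fact controls $\big|\p_t\norm{u}_{l^p_h}^p\big|$ and so applies to $e^{\pm it\om}$ — one gets, with $\ka:=2\sqrt d\,h^{-1}$,
\begin{align*}
\norm{\cos(t\om)\phi}_{l^p_h}\le e^{\ka|t|}\norm{\phi}_{l^p_h},\qquad \norm{\sin(t\om)\phi}_{l^p_h}\le e^{\ka|t|}\norm{\phi}_{l^p_h},
\end{align*}
whence, integrating in time, $\norm{S(\ta)\phi}_{l^p_h}\le\ka^{-1}e^{\ka|\ta|}\norm{\phi}_{l^p_h}$; moreover Lemma~\ref{lem2.0} with $\al=\tfrac12$ gives $\norm{\om\phi}_{l^p_h}\le 2\sqrt d\,h^{-1}\norm{\phi}_{l^p_h}$, so $\norm{\om\sin(t\om)\phi}_{l^p_h}\le\ka\,e^{\ka|t|}\norm{\phi}_{l^p_h}$. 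Finally, multiplication by $1-V$ is bounded on every $l^p_h$ with norm $\le 1+\norm{V}_{l^\infty_h}$.

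\emph{Step 2: Gr\"onwall.} Inserting these bounds into the two identities and using $0<h\le 1$ to absorb $\ka^{-1}\le(2\sqrt d)^{-1}$ and $1\le 2\sqrt d\,h^{-1}$, one finds, for $t\ge 0$ (the case $t<0$ is symmetric) and with $\Phi(t):=\norm{v(t)}_{l^p_h}+\norm{\p_t v(t)}_{l^p_h}$,
\begin{align*}
\Phi(t)\le C\,h^{-1}e^{\ka t}\norm{(f,g)}_{l^p_h\times l^p_h}+C\big(1+\norm{V}_{l^\infty_h}\big)\int_0^t e^{\ka(t-s)}\norm{v(s)}_{l^p_h}\,ds,
\end{align*}
with $C=C(d)$; the factor $h^{-1}$ enters only through the $\om\sin(t\om)f$ term, since $S$ itself is $O(h)$. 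Putting $\Psi(t):=e^{-\ka t}\Phi(t)$ and using $\norm{v}_{l^p_h}\le\Phi$ turns this into $\Psi(t)\le C h^{-1}\norm{(f,g)}_{l^p_h\times l^p_h}+C(1+\norm{V}_{l^\infty_h})\int_0^t\Psi(s)\,ds$, and Gr\"onwall gives $\Psi(t)\le C h^{-1}\norm{(f,g)}_{l^p_h\times l^p_h}e^{C(1+\norm{V}_{l^\infty_h})t}$. Hence $\Phi(t)\le C h^{-1}\norm{(f,g)}_{l^p_h\times l^p_h}e^{(\ka+C(1+\norm{V}_{l^\infty_h}))t}$, and since $h\le 1$ the exponent is $\le C'h^{-1}t$ with $C'=C'(d,\norm{V}_{l^\infty_h})$ — which is the claimed estimate.

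\emph{Main obstacle.} There is nothing deep here; the point needing care is the bookkeeping of powers of $h$: one must verify that the only genuine $h^{-1}$ loss sits in the $\om\sin(t\om)f$ contribution to $\p_t v$ (the sine propagator $S$ being $O(h)$), and that $0<h\le 1$ permits folding the additive constant $C(1+\norm{V}_{l^\infty_h})$ in the exponent into a single multiple of $h^{-1}t$. A minor secondary point is that the Duhamel representation and all estimates remain valid at $p=\infty$, which is immediate from the boundedness of $V$ and of the propagators above.
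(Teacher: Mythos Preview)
Your argument is correct and takes a slightly different --- and in fact somewhat cleaner --- route than the paper's. The paper performs the first-order reduction $\psi=(1-\De_h)^{-1/2}\p_t v-iv$, writes Duhamel for $\psi$, and then needs Corollary~\ref{cor1} to estimate both the initial datum $(1-\De_h)^{-1/2}g$ in \eqref{intial-psi} and the source $(1-\De_h)^{-1/2}[(1-V)v]$; this is where the factor $(1+|\ln h|)^{d/2}$ enters (it is later absorbed into the exponential). You instead stay with the second-order equation and the $\cos/\sin$ propagator representation, and your key observation is that writing the sine propagator as $S(\ta)=\int_0^\ta\cos(s\om)\,ds$ lets you bound it by \emph{time integration} rather than by an operator bound on $\om^{-1}=(1-\De_h)^{-1/2}$. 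As a consequence you bypass Corollary~\ref{cor1} and Lemma~\ref{lem2.3.1} entirely, relying only on Lemmas~\ref{lem2.0} and~\ref{lem2.2}. After that, both proofs close the estimate in the same way, via Gr\"onwall applied to $e^{-\ka t}$ times the relevant norm. What your approach buys is a small simplification (no logarithmic loss and no multiplier theorem needed here); the paper's approach has the advantage that the same $\psi$-variable is already set up in Section~4.1 for the local theory, so the two arguments share infrastructure.
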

\begin{proof}
Let $\psi_n=\left[(1-\De_h)^{-1/2}\p_t-i\right]v_n$, then $\psi_n$ follows the equation
\begin{align}\label{eqlem4.1.1}
\p_t\psi_n+i\sqrt{1-\De_h}\psi_n=(1-\De_h)^{-1/2}\big[(1-V_n)v_n\big],
\end{align}
with the initial data 
\begin{align}\label{intial-psi}
\psi_n(0)=\psi_{n,0}\triangleq (1-\De_h)^{-1/2}g_n-if_n.
\end{align}
It follows that
\begin{align}\label{eqlem4.1.2}
\begin{split}
 \left \{
\begin{array}{ll}
\p_t v_n=\re\left[\sqrt{1-\De_h}\psi_n\right], \\
v_n=-\im(\psi_n).
\end{array}
\right.
\end{split}
\end{align}
By \eqref{eqlem4.1.1} and Duhamel's formula, we have that 
\begin{align*}
\psi_n(t)=e^{-it\sqrt{1-\De_h}}\psi_{n,0}+\int_0^t e^{-i(t-s)\sqrt{1-\De_h}}(1-\De_h)^{-1/2}\big[(1-V_n)v_n(s)\big]\,ds.
\end{align*}
Similar as section 4.1, we denote
\begin{align}\label{eq10.24.3}
\psi(t)=\{\psi_n(t)\}_{n\in h\Z^d},\quad F(v)=\{(1-\De_h)^{-1/2}\big[(1-V_n)v_n\big]\}_{n\in h\Z^d}.
\end{align}
Then
$$
\psi(t)=e^{-it\sqrt{1-\De_h}}\psi_{0}+\int_0^t e^{-i(t-s)\sqrt{1-\De_h}}F(v)(s)\,ds.
$$
Then by Corollary \ref{cor1} and Lemma \ref{lem2.2}, 
\begin{align}\label{eqlem4.1.3}
\norm{\psi(t)}_{l^p_h}
&\leq 
\norm{e^{-it\sqrt{1-\De_h}}\psi_0}_{l^p_h}+\normo{\int_0^t e^{-i(t-s)\sqrt{1-\De_h}}F(v)(s)\,ds}_{l^p_h}\notag\\
&\leq 
e^{2\sqrt{d}h^{-1}t}\norm{\psi_0}_{l^p_h}+\int_0^t \normo{e^{-i(t-s)\sqrt{1-\De_h}}F(v)}_{l^p_h}\,ds\notag\\
&\leq
e^{2\sqrt{d}h^{-1}t}\norm{\psi_0}_{l^p_h}+\int_0^t e^{2\sqrt{d}h^{-1}(t-s)}\big(1+|\ln h|\big)^{d/2}\left(1+\norm{V}_{l^{\infty}_h}\right)\normo{v(s)}_{l^p_h}\,ds\notag\\
&\leq
e^{2\sqrt{d}h^{-1}t}\norm{\psi_0}_{l^p_h}+\big(1+|\ln h|\big)^{d/2}\left(1+\norm{V}_{l^{\infty}_h}\right)\int_0^t e^{2\sqrt{d}h^{-1}(t-s)}\normo{\psi(s)}_{l^p_h}\,ds.
\end{align}
Let 
$$
G(t)\triangleq e^{-2\sqrt{d}h^{-1}t}\norm{\psi(t)}_{l^p_h}.
$$
From \eqref{eqlem4.1.3}, we can see that
$$
G(t)\leq G(0)+\int^t_0 \big(1+|\ln h|\big)^{d/2}\left(1+\norm{V}_{l^{\infty}_h}\right)G(s)ds.
$$
Then by Gronwall inequality, we have
$$
G(t)\leq G(0)e^{\big(1+|\ln h|\big)^{d/2}\left(1+\norm{V}_{l^{\infty}_h}\right)t}.
$$
Hence, there exists a constant $C_0$ dependent of $d$ and $\norm{V}_{l^{\infty}_h}$ such that 
\begin{align}\label{eqlem4.1.4}
\norm{\psi(t)}_{l^p_h}\leq & e^{ C_0(h^{-1}+|\ln h|^{d/2})t}\norm{\psi_0}_{l^p_h}\notag\\
\leq & e^{C_0 h^{-1}t}\norm{\psi_0}_{l^p_h}.
\end{align}
Then by \eqref{eqlem4.1.1} and \eqref{eqlem4.1.2}
\begin{align}\label{eqlem4.1.5}
\norm{v(t)}_{l^p_h}
&=
\norm{\im{\psi(t)}}_{l^p_h}\notag\\
&\leq
e^{C_0h^{-1}t}\norm{\psi_0}_{l^p_h}\notag\\
&\leq
e^{C_0h^{-1}t}\norm{(1-\De_h)^{-1/2}g_n-if_n}_{l^p_h}\notag\\
&\leq
e^{C_0h^{-1}t}\left[C_1(1+|\ln h|^{d/2})\norm{g}_{l^p_h}+\norm{f}_{l^p_h}\right],
\end{align}
where $C_1$ is a positive constant only dependent of $d$.

Similar to \eqref{eqlem4.1.5}, we have
\begin{align}\label{eqlem4.1.6}
\norm{\p_t v(t)}_{l^p_h}
&=
\norm{\re{\sqrt{1-\De_h}\psi(t)}}_{l^p_h}\notag\\
&\leq
e^{C_0h^{-1}t}\norm{\sqrt{1-\De_h}\psi_0}_{l^p_h}\notag\\
&\leq
e^{C_0h^{-1}t}\norm{g_n-i\sqrt{1-\De_h}f_n}_{l^p_h}\notag\\
&\leq
e^{C_0h^{-1}t}\left(\norm{g}_{l^p_h}+C_2h^{-1}\norm{f}_{l^p_h}\right),
\end{align}
where $C_2$ is a positive constant  only dependent of $d$. Combining with \eqref{eqlem4.1.5} and \eqref{eqlem4.1.6}, we get the desired result.
\end{proof}

Followed from \eqref{assump-Vn-KG-1}, there exists some constant $\delta_0>0$ such that 
\begin{align}\label{assump-Vn-KG-1-1}
\inf_{n\in h\Z^d}\{V_n\}+(2d-\delta_0)h^{-2}\ge 0.
\end{align} 
Then we have the following proposition.
\begin{prop}\label{thm4.1}
Let  $d,\s, h,\lambda$ and $ p$ be under the same assumptions with Theorem 1.5,   $V_n$ satisfy \eqref{assump-Vn-KG-1-1}, and $u_n$ be the solution of \eqref{defKG}.
Suppose that $(f,g)\in l^p_h\times l^p_h$, then
\begin{align*}
\norm{\big(u,\p_tu\big)}_{l^p_h\times l^p_h}\leq C\delta_0^{-1}h^{-1}e^{C_0\delta_0^{-1}h^{-1}t},
\end{align*}
where the positive constants $C=C(d,\s, \norm{V}_{l^\infty_h}, \norm{(f,g)}_{l^p_h\times l^p_h})$ and $C_0=C_0(d,\s, \norm{V}_{l^\infty_h})$.
\end{prop}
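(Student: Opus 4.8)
The plan is to run the linear--nonlinear decomposition outlined in the introduction. Write $u=v+w$, where $v=\{v_n\}_{n\in h\Z^d}$ solves the linear problem \eqref{eqthm4.1.1} with data $(f,g)$; then $w=u-v$ satisfies
\[
\p_{tt}w_n-\De_hw_n+V_nw_n=-|u_n|^{2\s}u_n,\qquad\big(w(0),\p_tw(0)\big)=(0,0).
\]
The component $v$ requires no further work: Lemma \ref{lem4.0} gives, for every $q\ge1$, $\norm{(v,\p_tv)}_{l^q_h\times l^q_h}\le Ch^{-1}e^{Ch^{-1}t}\norm{(f,g)}_{l^q_h\times l^q_h}$ with $C=C(d,\norm{V}_{l^\infty_h})$, and whenever $q\ge p$ the embedding $l^p_h\hookrightarrow l^q_h$ gives $\norm{(f,g)}_{l^q_h\times l^q_h}\le\norm{(f,g)}_{l^p_h\times l^p_h}$; in particular both $\norm{(v,\p_tv)}_{l^p_h\times l^p_h}$ and $\norm{\p_tv}_{l^{2\s+2}_h}$ (legitimate since $2\s+2\ge p$) obey the desired bound. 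Since $p\ge2$ we have $\norm{w}_{l^p_h}\le\norm{w}_{l^2_h}$ and $\norm{\p_tw}_{l^p_h}\le\norm{\p_tw}_{l^2_h}$, so it suffices to estimate $w$ in $l^2_h\times l^2_h$.

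For that I would use the modified energy
\[
\mathcal F(t)\triangleq\frac12\sum_{n\in h\Z^d}\Big(|\p_tw_n|^2+\frac1{h^2}\sum_{j=1}^d(w_{n+he_j}-w_n)^2+(V_n+2dh^{-2})|w_n|^2\Big)+\frac1{2\s+2}\sum_{n\in h\Z^d}|u_n|^{2\s+2},
\]
whose last term is built from the \emph{full} solution $u$, not from $w$. Since $\la=1$ and, by \eqref{assump-Vn-KG-1-1}, $V_n+2dh^{-2}\ge\de_0h^{-2}$, the functional $\mathcal F$ is nonnegative and strongly coercive: $\tfrac12\norm{\p_tw}_{l^2_h}^2\le\mathcal F$, $\tfrac{\de_0}{2}h^{-2}\norm{w}_{l^2_h}^2\le\mathcal F$, and $\tfrac1{2\s+2}\norm{u}_{l^{2\s+2}_h}^{2\s+2}\le\mathcal F$; moreover $\mathcal F(0)=\tfrac1{2\s+2}\norm{f}_{l^{2\s+2}_h}^{2\s+2}<\infty$ precisely because $p\le2\s+2$. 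Differentiating, using the equation for $w$ and $\p_t\big(\tfrac1{2\s+2}\sum_n|u_n|^{2\s+2}\big)=\langle|u|^{2\s}u,\p_tv\rangle+\langle|u|^{2\s}u,\p_tw\rangle$, all the contributions involving $\De_hw$, $V_nw_n$ and $|u_n|^{2\s}u_n$ cancel exactly, leaving
\[
\mathcal F'(t)=2dh^{-2}\langle w,\p_tw\rangle+\langle|u|^{2\s}u,\p_tv\rangle.
\]

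The two terms are estimated separately. For the first, Cauchy--Schwarz together with the strong coercivity gives $\big|2dh^{-2}\langle w,\p_tw\rangle\big|\le 2dh^{-2}\norm{w}_{l^2_h}\norm{\p_tw}_{l^2_h}\le 2dh^{-2}\cdot\de_0^{-1/2}h\sqrt{2\mathcal F}\cdot\sqrt{2\mathcal F}=4d\,\de_0^{-1/2}h^{-1}\mathcal F$; this is the step that replaces the naive $h^{-2}$ growth rate by $h^{-1}$, the $h^{-2}$ being absorbed by the gain $\norm{w}_{l^2_h}\lesssim\de_0^{-1/2}h\,\mathcal F^{1/2}$ coming from the $h^{-2}$-weight in $\mathcal F$. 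For the second, H\"older's inequality with the conjugate pair $\big(\tfrac{2\s+2}{2\s+1},2\s+2\big)$ gives $\big|\langle|u|^{2\s}u,\p_tv\rangle\big|\le\norm{u}_{l^{2\s+2}_h}^{2\s+1}\norm{\p_tv}_{l^{2\s+2}_h}\le(2\s+2)^{\frac{2\s+1}{2\s+2}}\mathcal F^{\frac{2\s+1}{2\s+2}}\norm{\p_tv}_{l^{2\s+2}_h}$, which is \emph{sublinear} in $\mathcal F$ (the exponent $\tfrac{2\s+1}{2\s+2}<1$) with coefficient $\le Ch^{-1}e^{Ch^{-1}t}\norm{(f,g)}_{l^p_h\times l^p_h}$ by Lemma \ref{lem4.0} (here $p\le2\s+2$ is used). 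Writing $Y=\mathcal F$, $\ga=\tfrac{2\s+1}{2\s+2}$, $b(t)=C\norm{\p_tv}_{l^{2\s+2}_h}$, we obtain $Y'\le 4d\de_0^{-1/2}h^{-1}Y+b(t)Y^{\ga}$ with $Y(0)=\mathcal F(0)$; the substitution $Z=Y^{1-\ga}$ linearizes this to $Z'\le(1-\ga)4d\de_0^{-1/2}h^{-1}Z+(1-\ga)b(t)$, and since $\int_0^t b\lesssim\norm{(f,g)}_{l^p_h\times l^p_h}e^{Ch^{-1}t}$ (the $h^{-1}$ in $b$ cancelling against the integration), Gronwall's inequality yields $\mathcal F(t)=Z(t)^{2\s+2}\le C\big(d,\s,\norm{V}_{l^\infty_h},\norm{(f,g)}_{l^p_h\times l^p_h}\big)e^{C_0\de_0^{-1}h^{-1}t}$. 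Combining the resulting $l^2_h\times l^2_h$ bound for $(w,\p_tw)$ with the Lemma \ref{lem4.0} bound for $(v,\p_tv)$ and the embeddings $l^2_h\hookrightarrow l^p_h$ gives the asserted estimate.

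The step I expect to be the main obstacle is the design of $\mathcal F$ and the verification of the cancellation: one must put the full defocusing term $\tfrac1{2\s+2}\norm{u}_{l^{2\s+2}_h}^{2\s+2}$ into the energy so that after differentiation the only surviving nonlinear contribution is the pairing $\langle|u|^{2\s}u,\p_tv\rangle$ against the \emph{linear} flow --- a pairing that H\"older's inequality makes sublinear in $\mathcal F$ and that Lemma \ref{lem4.0} can control exactly in the range $p\le2\s+2$ (while $p\ge2$ enters only through the final embedding $l^2_h\hookrightarrow l^p_h$). Once this is in place, the Cauchy--Schwarz estimate producing the $h^{-1}$ rate and the $Z=Y^{1-\ga}$ trick that closes the differential inequality are routine.
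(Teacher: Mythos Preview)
Your proof is correct and follows essentially the same strategy as the paper: the same linear--nonlinear decomposition $u=v+w$, the same appeal to Lemma~\ref{lem4.0} for $v$, and the same modified energy for $w$ built with the full nonlinear term $\tfrac{1}{2\s+2}\|u\|_{l^{2\s+2}_h}^{2\s+2}$ so that differentiation leaves only the pairing $\langle|u|^{2\s}u,\p_tv\rangle$.

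There are two minor differences worth noting. First, the paper's energy omits your discrete Dirichlet term $\tfrac{1}{2h^2}\sum_{n,j}(w_{n+he_j}-w_n)^2$; it instead leaves the cross terms $h^{-2}\sum_j(w_{n+he_j}+w_{n-he_j})\p_tw_n$ on the right and bounds them by Young's inequality, which yields the same $\de_0^{-1}h^{-1}$ rate. Second, for the term $\langle|u|^{2\s}u,\p_tv\rangle$ the paper applies Young's inequality with exponents $(\tfrac{2\s+2}{2\s+1},2\s+2)$ rather than H\"older; this produces $\tfrac{1}{2\s+2}\|u\|_{l^{2\s+2}_h}^{2\s+2}+\tfrac{2\s+1}{2\s+2}\|\p_tv\|_{l^{2\s+2}_h}^{2\s+2}$, and the first summand is absorbed directly into $E(t)$. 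This gives a \emph{linear} differential inequality $E'\le C\de_0^{-1}h^{-1}E+\|\p_tv\|_{l^{2\s+2}_h}^{2\s+2}$ immediately, so Gronwall applies without the Bernoulli substitution $Z=Y^{1-\ga}$. Your route works, but the paper's Young-instead-of-H\"older step is a worthwhile simplification that you may want to adopt.
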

\begin{proof}
Let $v_n$ be the solution of the following equation
\begin{align}
\begin{split}
 \left \{
\begin{array}{ll}
\p_{tt}v_{n}-\Delta_hv_n+V_nv_n =0\\
v_{n}(0)=f_n, \quad\p_t v_{n}(0)=g_n,
\end{array}
\right.
\end{split}
\end{align}
Let $w_n=u_n-v_n$, then $w_n$ obeys the following equation
\begin{align}\label{eqthm4.2.1}
\begin{split}
 \left \{
\begin{array}{ll}
\p_{tt}w_n-\Delta_hw_n+V_nw_n =-|u_n|^{2\s}u_n,\\
w_{n}(0)=0, \quad\p_t w_{n}(0)=0.
\end{array}
\right.
\end{split}
\end{align}

Mutiplying both sides of equation (\ref{eqthm4.2.1}) by $\p_t w_n$ and sum by $n$, we get
\begin{align}\label{eqthm4.2.2}
 \sum_{n\in h\Z^d}\p_{tt}w _n\p_t w_n& =\sum_{n\in h\Z^d}\left[\frac{1}{h^2}\sum_{j=1}^d\left(w_{n+he_j}+w_{n-he_j}\right)\p_t w_n\right]-\sum_{n\in h\Z^d}(V_n+2dh^{-2})w_n\p_tw _n\notag \\
      & \quad-\sum_{n\in h\Z^d}|u_n|^{2\sigma}u_n\left(\p_tu_n-\p_t v_n\right).
 \end{align}
Define the modified energy 
\begin{align*}
E(t)\triangleq \sum_{n\in h\Z^d}\left(\frac{(V_n+2dh^{-2})|w_n|^2}{2}+\frac{|\p_tw_n|^2}{2}+\frac{|u_n|^{2\s+2}}{2\s+2}\right).
\end{align*}
By (\ref{eqthm4.2.2}), we have
\begin{align*}
\p_tE(t)
&=
 \sum_{n\in h\Z^d}\left[\frac{1}{h^2}\sum_{j=1}^d\left(w_{n+he_j}+w_{n-he_j}\right)\p_t w_n\right]+\sum_{n\in h\Z^d}|u_n|^{2\s}u_n\p_t v_n \\
&\leq
\frac{2d}{\delta_0 h}\sum_{n\in h\Z^d}\left(\frac{\delta_0}{2h^2}|w_n|^2+\frac12|\p_tw_n|^2\right)+\frac1{2\s+2}\sum_{n\in h\Z^d}|u_n|^{2\s+2}\\
&\qquad +\frac{2\s+1}{2\s+2}\sum_{n\in h\Z^d}|\p_tv_n|^{2\s+2}.
\end{align*}
Note that 
$V_n+2dh^{-2}\ge \delta_0h^{-2}$, 
we further get that 
\begin{align}\label{eqlem4.2.1}
\p_tE(t)
&\leq
\frac{2d}{\delta_0 h}E(t)+\frac{2\s+1}{2\s+2}\sum_{n\in h\Z^d}|\p_tv_n|^{2\s+2}.
\end{align}
By Lemma \ref{lem4.0}, 
\begin{align}\label{eqlem4.2.2}
\frac{2\s+1}{2\s+2}\sum_{n\in h\Z^d}|\p_tv_n|^{2\s+2}
&\leq 
C_1h^{-1}e^{C_1h^{-1}t}\norm{(f,g)}_{l^{2\s+2}_h\times l^{2\s+2}_h}^{2\s+2}.
\end{align}
Here and below in this proof, denote $C_j$, where $j = 1, 2, ..., $ as positive constants that are independent of $d$, $\sigma$, and $\norm{V}_{l^{\infty}_h}$.

Substituting \eqref{eqlem4.2.2} into \eqref{eqlem4.2.1}, and noting that $p\leq 2\s+2$, we have 
\begin{align*}
\p_t E(t)\leq  \frac{2d}{\delta_0 h}{E}(t)+C_1h^{-1}e^{C_1h^{-1}t}\norm{(f,g)}_{l^p_h\times l^p_h}^{2\s+2},
\end{align*}
where $C_2$ is a constant dependent of $d,\s$ and $\norm{V}_{l^{\infty}_h}$.

Therefore,
\begin{align*}
E(t)&\leq e^{2d\delta_0^{-1}h^{-1}t}E(0)+C_3h^{-1}e^{C_4\delta_0^{-1}h^{-1}t}\norm{(f,g)}_{l^p_h\times l^p_h}^{2\s+2}.
\end{align*}
Note that 
\begin{align*}
E(0)
=&\sum_{n\in h\Z^d}\left(\frac{(V_n+2dh^{-2})|w_{n}(0)|^2}{2}+\frac{|\p_tw_n(0)|^2}{2}+\frac{|u_{n}(0)|^{2\s+2}}{2\s+2}\right)\\
=&\frac1{2\s+2}\sum_{n\in h\Z^d}|f_{n}|^{2\s+2}.
\end{align*}
This further gives that 
\begin{align*}
E(t)&\leq 
C_5h^{-1}e^{C_6h^{-1}t}\norm{(f,g)}_{l^p_h\times l^p_h}^{2\s+2},
\end{align*}
Since  
\begin{align*}
\norm{\big(w,\p_tw\big)}_{l^2_h\times l^2_h}
\leq & 2\delta_0^{-1}E(t ),
\end{align*}
we obtain that 
\begin{align*}
\norm{\big(w,\p_tw\big)}_{l^2_h\times l^2_h}
\leq &2C_5\delta_0^{-1} h^{-1}e^{C_6\delta_0^{-1}h^{-1}t}\norm{(f,g)}_{l^p_h\times l^p_h}^{2\s+2}.
\end{align*}
Therefore,
\begin{align*}
\norm{\big(u,\p_tu\big)}_{l^p_h\times l^p_h}
\leq & \norm{\big(w,\p_tw\big)}_{l^2_h\times l^2_h}+\norm{\big(v,\p_tv\big)}_{l^p_h\times l^p_h}\\
\leq & \norm{\big(w,\p_tw\big)}_{l^p_h\times l^p_h}+\norm{\big(v,\p_tv\big)}_{l^p_h\times l^p_h}\\
\leq & C\delta_0^{-1}h^{-1}e^{C_0\delta_0^{-1}h^{-1}t},
\end{align*}
where the constants $C=C(d,\s, \norm{V}_{l^\infty_h}, \norm{(f,g)}_{l^p_h\times l^p_h})>0,C_0=C_0(d,\s, \norm{V}_{l^\infty_h})>0$. 
This finishes the proof of Proposition \ref{thm4.1}.
\end{proof}
Theorem \ref{thm1.4} follows from a  similar argument in Theorem \ref{thm1.3}.

\subsection{Blowing-up in the focusing case}\label{sec:blowup}
\vskip 0.2cm
For the focusing case, we prove blow-up for the solution.

Let $u=\{u_n\}_{n\in h\Z^d}$ be a solution to the Cauchy problem  \eqref{defKG} with initial data  $(f,g)=\{(f_n,g_n)\}_{n\in h\Z^d}$, define
\begin{align}\label{eq4.17.1}
I(t)=\sum_{n\in h\Z^d} u_n^2(t).
\end{align}
The key to prove Theorem \ref{mainthm3} is the following estimation of $I^{\prime\prime}(t)$.
\begin{lem} \label{lem2} Under the same assumptions of $d,\la$ and $V_n$ with Theorem \ref{mainthm3}, let $(f,g)$ be the initial data such that $E(f,g)<0$, then the corresponding solution $u_n$ of $(f,g)$ satisfing
\begin{align*}
I^{\prime\prime}(t)\geq (4+2\s)\sum_n \aabs{\p_t u_n}^2
\end{align*}
\end{lem}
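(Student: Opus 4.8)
The plan is to compute $I'(t)$ and $I''(t)$ directly from the equation \eqref{defKG} and then absorb the ``bad'' terms using the sign of the energy. Differentiating \eqref{eq4.17.1} gives $I'(t)=2\sum_{n}u_n\,\p_t u_n$, and differentiating once more,
\begin{align*}
I''(t)=2\sum_{n}\aabs{\p_t u_n}^2+2\sum_{n}u_n\,\p_{tt}u_n.
\end{align*}
Next I substitute $\p_{tt}u_n=\Delta_h u_n-V_n u_n+|u_n|^{2\s}u_n$ (recall $\lambda=-1$ in the focusing case) into the second sum. Using the discrete summation-by-parts identity
\begin{align*}
\sum_{n\in h\Z^d}u_n\,\Delta_h u_n=-\frac1{h^2}\sum_{n\in h\Z^d}\sum_{j=1}^d\left(u_{n+he_j}-u_n\right)^2,
\end{align*}
this yields
\begin{align*}
I''(t)=2\sum_{n}\aabs{\p_t u_n}^2-\frac{2}{h^2}\sum_{n}\sum_{j=1}^d\left(u_{n+he_j}-u_n\right)^2-2\sum_{n}V_n u_n^2+2\sum_{n}|u_n|^{2\s+2}.
\end{align*}

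Now I bring in the conserved energy. With $\lambda=-1$,
\begin{align*}
E(f,g)=E\left(u,\p_t u\right)=\frac12\sum_{n}\aabs{\p_t u_n}^2+\frac1{2h^2}\sum_{n}\sum_{j=1}^d\left(u_{n+he_j}-u_n\right)^2+\frac12\sum_{n}V_n u_n^2-\frac1{2\s+2}\sum_{n}|u_n|^{2\s+2}.
\end{align*}
The idea is to write the gradient, potential, and nonlinear terms in $I''(t)$ in terms of $E$ plus a multiple of $\sum_n\aabs{\p_t u_n}^2$. Multiplying the energy identity by a constant $\mu$ to be chosen and adding, I get
\begin{align*}
I''(t)=(2-\mu)\sum_{n}\aabs{\p_t u_n}^2-2\mu E(f,g)+\left[2-\tfrac{2\mu}{2\s+2}\right]\cdot(\text{something}),
\end{align*}
so I need to match the coefficients of the gradient term ($-\tfrac2{h^2}$ versus $\mu\cdot\tfrac1{h^2}$), the potential term ($-2$ versus $\mu$), and the nonlinear term ($2$ versus $-\tfrac{\mu}{\s+1}$). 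The first two force $\mu=-2$, while matching the nonlinearity needs $\mu=-2(\s+1)$; since these disagree, the cleanest route is to use $\mu=-(4+2\s)$ on the gradient and potential pieces and handle the discrepancy: more precisely, take
\begin{align*}
I''(t)=(4+2\s)\sum_{n}\aabs{\p_t u_n}^2-(4+2\s)\cdot 2E(f,g)+\Big[\text{nonnegative gradient/potential remainder}\Big],
\end{align*}
where the remainder is $(2\s+2)\big(\tfrac1{h^2}\sum_n\sum_j(u_{n+he_j}-u_n)^2+\sum_n V_n u_n^2\big)-$(the gradient/potential terms already present), which one checks is a nonnegative combination precisely because $\inf_n V_n>0$ guarantees $\sum_n V_n u_n^2\ge 0$ and the gradient sum is manifestly $\ge 0$. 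Since $E(f,g)<0$, the term $-2(4+2\s)E(f,g)>0$, and dropping it together with the nonnegative remainder gives exactly $I''(t)\ge(4+2\s)\sum_n\aabs{\p_t u_n}^2$.

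The main obstacle is organizing the bookkeeping of coefficients so that every term that is thrown away is genuinely nonnegative: the nonlinear term $2\sum_n|u_n|^{2\s+2}$ and $-2(4+2\s)E(f,g)$ are both positive and easy, but one must be careful that after subtracting $2(4+2\s)E$ the leftover gradient and potential contributions still have a good sign, which is where the hypothesis $\inf_n V_n>0$ is used in an essential way. Once the algebra is arranged correctly the inequality is immediate; I expect no analytic difficulty beyond the discrete integration-by-parts identity, which is justified since $u(t)\in l^2_h$ for all $t$ in the lifespan by Lemma \ref{lem2.2.3} and the local theory of Theorem \ref{thmLOCKG}, making all the sums absolutely convergent and the term-by-term differentiation legitimate.
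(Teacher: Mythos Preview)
Your proposal is correct and follows the same route as the paper: compute $I''$ from the equation using discrete summation by parts, then invoke energy conservation together with $E(f,g)<0$ and $\inf_n V_n>0$ to discard nonnegative leftovers. The paper organizes the algebra more cleanly by solving the energy identity for $\sum_n|u_n|^{2\sigma+2}$ and substituting directly, obtaining the exact formula
\[
I''(t)=(4+2\sigma)\sum_n|\partial_t u_n|^2+2\sigma\Bigl(\tfrac{1}{h^2}\sum_{n}\sum_{j}(u_{n+he_j}-u_n)^2+\sum_n V_n u_n^2\Bigr)-(4+4\sigma)E(f,g),
\]
which avoids your coefficient-matching detour; incidentally, with your choice of $\mu$ the ``gradient/potential remainder'' also carries an extra $2\sum_n|\partial_t u_n|^2$ term you did not list, but since it is nonnegative this only helps.
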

\begin{proof}
By \eqref{eq4.17.1}, we have
\begin{align*}
I^{\prime\prime}(t)&=2\sum_{n\in h\Z^d} \aabs{\p_t u_n}^2 + 2\sum_{n\in h\Z^d} u_n\p_{tt}u_n \\
                    &=2\sum_{n\in h\Z^d} \aabs{\p_t u_n}^2 + 2\sum_{n\in h\Z^d} u_n\left(\De_h u_n-V_nu_n+ |u_n|^{2\sigma}u_n\right) \\
                    &=2\sum_{n\in h\Z^d} \aabs{\p_t u_n}^2 + 2\sum_n u_n\left(\frac{1}{h^2}\sum_{j=1}^d\left(u_{n+he_j}+u_{n-he_j}-2u_n\right)-V_nu_n+|u_n|^{2\sigma}u_n\right).
\end{align*}
By variable substitution
\begin{align}\label{eq4.0}
I^{\prime\prime}(t)
                    &=2\sum_{n\in h\Z^d} \aabs{\p_t u_n}^2 + 2\sum_{n\in h\Z^d}\left(-V_n|u_n|^2+|u_n|^{2\s+2}\right)\notag\\
&\quad+\frac{2}{h^2}\sum_{j=1}^d\sum_{n\in h\Z^d}\left(2u_nu_{n+he_j}-|u_{n+he_j}|^2-|u_n|^2\right)\notag \\
                    &=2\sum_{n\in h\Z^d} \aabs{\p_t u_n}^2 + 2\sum_{n\in h\Z^d}\left(-V_n|u_n|^2+|u_n|^{2\s+2}\right)\notag\\&\quad-\frac{2}{h^2}\sum_{n\in h\Z^d}\sum_{j=1}^d\left(u_{n+he_j}-u_n\right)^2.
\end{align}
On the other hand, energy conservation law shows that
\begin{align}\label{eq4.2}
&\quad\sum_{n\in h\Z^d} |u_n|^{2\s+2}+\left(2\s + 2\right)E(f,g)\notag \\&\equiv  \left(\s + 1\right)\sum_{n\in h\Z^d} \left(\frac{1}{h^2}\sum_{j=1}^d\left(u_{n+he_j}-u_n\right)^2+V_n|u_n|^2+\aabs{\p_t u_n}^2\right).
\end{align}
Substituting (\ref{eq4.2}) into (\ref{eq4.0}), we get
\begin{align*}
I^{\prime\prime}(t)&= (4 + 2\s) \sum_{n\in h\Z^d}\aabs{\p_t u_n}^2 + 2\s\sum_{n\in h\Z^d}\left(\frac{1}{h^2}\sum_{j=1}^d\left(u_{n+he_j}-u_n\right)^2+V_n|u_n|^2\right) \\&\quad- (4 + 4\s)E(f,g).
\end{align*}
which implys
\begin{align*}
I(t)^{\prime\prime}\geq \sum_{n\in h\Z^d} (4+2\s)\aabs{\p_t u_n}^2  - (4 + 4\s)E(f,g) > 0.
\end{align*}
This finishes the proof of Lemma \ref{lem2}.
\end{proof}

Now we are ready to prove Theorem \ref{mainthm3}.
\begin{proof}
We prove by contradiction. Assuming $T=\infty$ be the maximal lifespan of $u_n$. By Lemma \ref{lem2}, for any solution $u_n$ with initial data $(f,g)$ satisfing $E(f,g)<0$, we have $$I^{\prime\prime}(t)> (4+2\s)\sum_{n\in h\Z^d}\aabs{\p_t u_n}^2>0,$$ for any $t\in[0,\infty)$. Then there exists $t_1\in (0,\infty)$ such that $I^{\prime}(t)>0$ and $I(t)>0$ for any $t\in [t_1,\infty)$. Then by Lemma \ref{lem2}
\begin{align*}
\quad\quad I^{\prime\prime}(t)I(t)&-(\s/2 + 1)I^{\prime}(t)^2 \\
&\geq 
\left(\sum_n \aabs{u_n}^2\right)\left((4+2\s)\sum_n \aabs{\p_t u_n}^2\right) - 4(1+ \s/2)\left(\sum_n u_n\p_t u_n\right)^2 \\
&\geq 
(4+2\s)\left(\sum_n \aabs{u_n}^2\right)\left(\sum_n \aabs{\p_t u_n}^2\right) - (4+2\s)\left(\sum_n u_n\p_t u_n\right)^2 \\
&
\geq 0.
\end{align*}
Thus, for $t\in [t_1, \infty)$, we have
\begin{align*}
&\left(I(t)^{-\s/2}\right)^{\prime}=-\frac{\s}{2} I(t)^{-\s/2 - 1} I^{\prime}(t) <0,\\
&\left(I(t)^{-\s/2}\right)^{\prime\prime}=-\frac{\s}{2} I(t)^{-\s/2-2}\left[I^{\prime\prime}(t)I(t)-(\s/2 + 1)I^{\prime}(t)^2\right]\leq 0.
\end{align*}
Therefore, 
\begin{align*}
I(t)^{-\s/2}\leq I(t_1)^{-\s/2} -\frac{\s}{2} I(t_1)^{-\s/2-1}I^{\prime}(t_1)(t-t_1),\quad t\in [t_1,\infty).
\end{align*}
So there exists $t_2\in [t_1,\infty)$ such that $I(t_2)^{-\s/2}<0$, this contradicts \eqref{eq4.17.1}.
\end{proof}

\bibliographystyle{plain}
\bibliography{reference}

\begin{thebibliography}{10}

\bibitem{1993Localization}
M.~Aizenman and S.~Molchanov.
\newblock Localization at large disorder and at extreme energies: an elementary
  derivation.
\newblock {\em Comm. Math. Phys.}, 157(2):245--278, 1993.

\bibitem{MR1199033}
G.~D. Akrivis.
\newblock Finite difference discretization of the cubic {S}chr\"{o}dinger
  equation.
\newblock {\em IMA J. Numer. Anal.}, 13(1):115--124, 1993.

\bibitem{2008Theabs}
A.~Avila.
\newblock The absolutely continuous spectrum of the almost mathieu operator.
\newblock {\em eprint Arxiv: 0810.2965}, 2008.

\bibitem{2015GlobalThe}
A.~Avila.
\newblock Global theory of one-frequency {S}chr\"{o}dinger operators.
\newblock {\em Acta Math.}, 215(1):1--54, 2015.

\bibitem{2017SharpPha}
A.~Avila, J.~You, and Q.~Zhou.
\newblock Sharp phase transitions for the almost {M}athieu operator.
\newblock {\em Duke Math. J.}, 166(14):2697--2718, 2017.

\bibitem{Bambusi2013Asymptotic}
D.~Bambusi.
\newblock Asymptotic stability of breathers in some {H}amiltonian networks of
  weakly coupled oscillators.
\newblock {\em Comm. Math. Phys}, 324(2):515--547, 2013.

\bibitem{MR4070305}
D.~Bambusi and Z.~Zhao.
\newblock Dispersive estimate for quasi-periodic {S}chr\"{o}dinger operators on
  1-{$d$} lattices.
\newblock {\em Adv. Math.}, 366:107071, 27, 2020.

\bibitem{MR2983017}
W.~Bao and Y.~Cai.
\newblock Optimal error estimates of finite difference methods for the
  {G}ross-{P}itaevskii equation with angular momentum rotation.
\newblock {\em Math. Comp.}, 82(281):99--128, 2013.

\bibitem{bernier2018bounds}
J.~Bernier.
\newblock Bounds on the growth of high discrete sobolev norms for the cubic
  discrete nonlinear {S}chr\"odinger equations on $h\mathbb{Z}$.
\newblock {\em eprint Arxiv: 1805.02468}, 2018.

\bibitem{2019Existence}
J.~Bernier and E.~Faou.
\newblock Existence and stability of traveling waves for discrete nonlinear
  {S}chr\"{o}dinger equations over long times.
\newblock {\em SIAM J. Math. Anal.}, 51(3):1607--1656, 2019.

\bibitem{2008Quasi}
J.~Bourgain and W.-M. Wang.
\newblock Quasi-periodic solutions of nonlinear random {S}chr\"{o}dinger
  equations.
\newblock {\em J. Eur. Math. Soc. (JEMS)}, 10(1):1--45, 2008.

\bibitem{1988The}
T.~Cazenave and F.~B. Weissler.
\newblock The {C}auchy problem for the nonlinear {S}chr\"{o}dinger equation in
  {$H^1$}.
\newblock {\em Manuscripta Math.}, 61(4):477--494, 1988.

\bibitem{2003Ill}
M.~Christ, J.~Colliander, and T.~Tao.
\newblock Ill-posedness for nonlinear {S}chr\"odinger and wave equations.
\newblock {\em Mathematics}, 2003.

\bibitem{2008On}
S.~Cuccagna and M.~Tarulli.
\newblock On asymptotic stability of standing waves of discrete
  {S}chr\"{o}dinger equation in {$\Bbb Z$}.
\newblock {\em SIAM J. Math. Anal.}, 41(3):861--885, 2009.

\bibitem{2020Sharp}
J.~Cuenin and I.~A. Ikromov.
\newblock Sharp time decay estimates for the discrete {K}lein-{G}ordon
  equation.
\newblock {\em Nonlinearity}, 34(11):7938--7962, 2021.

\bibitem{2008PAOn}
A.Stefanov D.~Pelinovsky.
\newblock On the spectral theory and dispersive estimates for a discrete
  {S}chr\"odinger equation in one dimension.
\newblock {\em Journal of Mathematical Physics}, 49(11), 2008.

\bibitem{DSS2020}
B.~Dodson, A.~Soffer, and T.~Spencer.
\newblock The nonlinear {S}chr\"{o}dinger equation on {Z} and {R} with bounded
  initial data: examples and conjectures.
\newblock {\em J. Stat. Phys.}, 180(1-6):910--934, 2020.

\bibitem{1989Anewp}
H.V. Dreifus and A.~Klein.
\newblock A new proof of localization in the {A}nderson tight binding model.
\newblock {\em Comm. Math. Phys.}, 124(2):285--299, 1989.

\bibitem{DWZ2016}
D.~Du, Y.~Wu, and K.~Zhang.
\newblock On blow-up criterion for the nonlinear {S}chr\"{o}dinger equation.
\newblock {\em Discrete Contin. Dyn. Syst.}, 36(7):3639--3650, 2016.

\bibitem{2002Discrete}
N.K. Efremidis, S.~Sears, D.N. Christodoulides, J.W. Fleischer, and M.~Segev.
\newblock Discrete solitons in photorefractive optically induced photonic
  lattices.
\newblock {\em Physical Review E Statistical Nonlinear Soft Matter Physics},
  66(4):137--138, 2002.

\bibitem{2015Dispersion}
I.~Egorova, E.~Kopylova, and G.~Teschl.
\newblock Dispersion estimates for one-dimensional discrete {S}chr\"{o}dinger
  and wave equations.
\newblock {\em J. Spectr. Theory}, 5(4):663--696, 2015.

\bibitem{Eisenberg2002Optical}
H.S. Eisenberg, R.~Morandotti, Y.~Silberberg, J.M. Arnold, G.~Pennelli, and
  J.~Aitchison.
\newblock Optical discrete solitons in waveguide arrays. i. soliton formation.
\newblock {\em Journal of the Optical Society of America B}, 19, 2002.

\bibitem{1983Absen}
J.~Fr\"{o}hlich and T.~Spencer.
\newblock Absence of diffusion in the {A}nderson tight binding model for large
  disorder or low energy.
\newblock {\em Comm. Math. Phys.}, 88(2):151--184, 1983.

\bibitem{2014Localization}
J.~Geng, J.~You, and Z.~Zhao.
\newblock Localization in one-dimensional quasi-periodic nonlinear systems.
\newblock {\em Geom. Funct. Anal.}, 24(1):116--158, 2014.

\bibitem{GV1985}
J.~Ginibre and G.~Velo.
\newblock The global {C}auchy problem for the nonlinear {K}lein-{G}ordon
  equation.
\newblock {\em Math. Z.}, 189(4):487--505, 1985.

\bibitem{1989The}
J.~Ginibre and G.~Velo.
\newblock The global {C}auchy problem for the nonlinear {K}lein-{G}ordon
  equation. {II}.
\newblock {\em Ann. Inst. H. Poincar\'{e} Anal. Non Lin\'{e}aire}, 6(1):15--35,
  1989.

\bibitem{1992Smoothing}
J.~Ginibre and G.~Velo.
\newblock Smoothing properties and retarded estimates for some dispersive
  evolution equations.
\newblock {\em Comm. Math. Phys.}, 144(1):163--188, 1992.

\bibitem{1977Glassey}
R.~T. Glassey.
\newblock On the blowing up of solutions to the {C}auchy problem for nonlinear
  {S}chr\"{o}dinger equations.
\newblock {\em J. Math. Phys.}, 18(9):1794--1797, 1977.

\bibitem{YH2019}
Y.~Hong and C.~Yang.
\newblock Strong convergence for discrete nonlinear {S}chr\"{o}dinger equations
  in the continuum limit.
\newblock {\em SIAM J. Math. Anal.}, 51(2):1297--1320, 2019.

\bibitem{MR0121655}
L.~H\"{o}rmander.
\newblock Estimates for translation invariant operators in {$L\sp{p}$} spaces.
\newblock {\em Acta Math.}, 104:93--140, 1960.

\bibitem{2016Onsite}
M.~Jenkinson and M.~I. Weinstein.
\newblock Onsite and offsite bound states of the discrete nonlinear
  {S}chr\"{o}dinger equation and the {P}eierls-{N}abarro barrier.
\newblock {\em Nonlinearity}, 29(1):27--86, 2016.

\bibitem{1999Metal}
S.~Jitomirskaya.
\newblock Metal-insulator transition for the almost {M}athieu operator.
\newblock {\em Ann. of Math. (2)}, 150(3):1159--1175, 1999.

\bibitem{2018Univer}
S.~Jitomirskaya and W.~Liu.
\newblock Universal hierarchical structure of quasiperiodic eigenfunctions.
\newblock {\em Ann. of Math. (2)}, 187(3):721--776, 2018.

\bibitem{1991Decay}
J.-L. Journ\'{e}, A.~Soffer, and C.~D. Sogge.
\newblock Decay estimates for {S}chr\"{o}dinger operators.
\newblock {\em Comm. Pure Appl. Math.}, 44(5):573--604, 1991.

\bibitem{2019endpoint}
M.~Keel and T.~Tao.
\newblock Endpoint {S}trichartz estimates.
\newblock {\em Amer. J. Math.}, 120(5):955--980, 1998.

\bibitem{MR2578796}
P.~G. Kevrekidis, D.~E. Pelinovsky, and A.~Stefanov.
\newblock Asymptotic stability of small bound states in the discrete nonlinear
  {S}chr\"{o}dinger equation.
\newblock {\em SIAM J. Math. Anal.}, 41(5):2010--2030, 2009.

\bibitem{KLS2013}
K.~Kirkpatrick, E.~Lenzmann, and G.~Staffilani.
\newblock On the continuum limit for discrete {NLS} with long-range lattice
  interactions.
\newblock {\em Comm. Math. Phys.}, 317(3):563--591, 2013.

\bibitem{2006Dispersive}
A.~I. Komech, E.~A. Kopylova, and M.~Kunze.
\newblock Dispersive estimates for 1{D} discrete {S}chr\"{o}dinger and
  {K}lein-{G}ordon equations.
\newblock {\em Appl. Anal.}, 85(12):1487--1508, 2006.

\bibitem{lei2023nonrelativistic}
Z.~Lei and Y.~Wu.
\newblock Non-relativistic limit for the cubic nonlinear {K}lein-gordon
  equations.
\newblock {\em eprint Arxiv: 2309.10235}, 2023.

\bibitem{2022Nonlinear}
W.~Liu and W.~M. Wang.
\newblock Nonlinear anderson localized states at arbitrary disorder.
\newblock {\em eprint Arxiv: 2201.00173}, 2022.

\bibitem{1999Scattering}
K.~Nakanishi.
\newblock Scattering theory for the nonlinear {K}lein-{G}ordon equation with
  {S}obolev critical power.
\newblock {\em Internat. Math. Res. Notices}, (1):31--60, 1999.

\bibitem{2011Invariant}
K.~Nakanishi and W.~Schlag.
\newblock Invariant manifolds and dispersive hamiltonian evolution equations.
\newblock {\em Zürich lectures in Advanced Mathematics}, 2011.

\bibitem{MR2682116}
G.M. N'Gu\'{e}r\'{e}kata and A.~Pankov.
\newblock Global well-posedness for discrete non-linear {S}chr\"{o}dinger
  equation.
\newblock {\em Appl. Anal.}, 89(9):1513--1521, 2010.

\bibitem{MR2147475}
P.~Pacciani, V.~Konotop, and G.~Perla~Menzala.
\newblock On localized solutions of discrete nonlinear {S}chr\"{o}dinger
  equation. {A}n exact result.
\newblock {\em Phys. D}, 204(1-2):122--133, 2005.

\bibitem{2006Gap}
A.~Pankov.
\newblock Gap solitons in periodic discrete nonlinear {S}chr\"{o}dinger
  equations.
\newblock {\em Nonlinearity}, 19(1):27--40, 2006.

\bibitem{2008Periodic}
A.~Pankov and V.~Rothos.
\newblock Periodic and decaying solutions in discrete nonlinear
  {S}chr\"{o}dinger with saturable nonlinearity.
\newblock {\em Proc. R. Soc. Lond. Ser. A Math. Phys. Eng. Sci.},
  464(2100):3219--3236, 2008.

\bibitem{1975Saddle}
L.~E. Payne and D.~H. Sattinger.
\newblock Saddle points and instability of nonlinear hyperbolic equations.
\newblock {\em Israel Journal of Mathematics}, 22(3):273--303, 1975.

\bibitem{M1989Statistical}
M.~Peyrard and A.~R. Bishop.
\newblock Statistical mechanics of a nonlinear model for {DNA} denaturation.
\newblock {\em Phys. Rev. Lett.}, 62:2755--2758, Jun 1989.

\bibitem{2023Anderson}
Y.~Shi and W.~M. Wang.
\newblock Anderson localized states for the quasi-periodic nonlinear wave
  equation on $\mathbb{Z}^d$.
\newblock {\em eprint Arxiv: 2306.00513}, 2023.

\bibitem{2000Schro}
B.~Simon.
\newblock Schr\"{o}dinger operators in the twenty-first century.
\newblock In {\em Mathematical physics 2000}, pages 283--288. Imp. Coll. Press,
  London, 2000.

\bibitem{1993The}
Jacques C.~H. Simon and E.~Taflin.
\newblock The {C}auchy problem for nonlinear {K}lein-{G}ordon equations.
\newblock {\em Comm. Math. Phys.}, 152(3):433--478, 1993.

\bibitem{2005Asymptotic}
A.~Stefanov and P.G. Kevrekidis.
\newblock Asymptotic behaviour of small solutions for the discrete nonlinear
  {S}chr\"{o}dinger and {K}lein-{G}ordon equations.
\newblock {\em Nonlinearity}, 18(4):1841--1857, 2005.

\bibitem{2023Ground}
A.G. Stefanov, R.M. Ross, and P.G. Kevrekidis.
\newblock Ground states in spatially discrete non-linear {S}chr\"odinger
  models.
\newblock 2023.

\bibitem{Strichartz1977Restrictions}
R.S. Strichartz.
\newblock Restrictions of {F}ourier transforms to quadratic surfaces and decay
  of solutions of wave equations.
\newblock {\em Duke Math. J.}, 44(3):705--714, 1977.

\bibitem{Sukhorukov2003Spatial}
A.~Sukhorukov, Y.~Kivshar, H.S. Eisenberg, and Y.~Silberberg.
\newblock Spatial optical solitons in waveguide arrays.
\newblock {\em Quantum Electronics, IEEE Journal of}, 39:31 -- 50, 02 2003.

\bibitem{MR0762363}
T.~R. Taha and M.~J. Ablowitz.
\newblock Analytical and numerical aspects of certain nonlinear evolution
  equations. {II}. {N}umerical, nonlinear {S}chr\"{o}dinger equation.
\newblock {\em J. Comput. Phys.}, 55(2):203--230, 1984.

\bibitem{Caz03book}
T.Cazenave.
\newblock {\em Semilinear Schr\"odinger equations}.
\newblock Courant Lecture Notes in Mathematics, 10. New York University,
  Courant Institute of Mathematical Sciences, New York; American Mathematical
  Society, Providence, RI, 2003.

\bibitem{1993Fluxon}
A.~V. Ustinov, M.~Cirillo, and B.~A. Malomed.
\newblock Fluxon dynamics in one-dimensional josephson-junction arrays.
\newblock {\em Physical Review B Condensed Matter}, 47(13):8357, 1993.

\bibitem{2009LongTime}
W.-M. Wang and Z.~Zhang.
\newblock Long time {A}nderson localization for the nonlinear random
  {S}chr\"{o}dinger equation.
\newblock {\em J. Stat. Phys.}, 134(5-6):953--968, 2009.

\bibitem{2023Exact}
Y.~Wang, X.~Xia, J.~You, Z.~Zheng, and Q.~Zhou.
\newblock Exact mobility edges for 1{D} quasiperiodic models.
\newblock {\em Comm. Math. Phys.}, 401(3):2521--2567, 2023.

\bibitem{1999Excitation}
M.~I. Weinstein.
\newblock Excitation thresholds for nonlinear localized modes on lattices.
\newblock {\em Nonlinearity}, 12(3):673--691, 1999.

\end{thebibliography}

\end{document}